\numberwithin{equation}{section}
\newcommand{\lessim}{\stackrel{<}{\sim}}
\newcommand{\ignore}[1]{}
\newtheorem{theorem}{Theorem}[section]
\newtheorem{proposition}[theorem]{Proposition}
\newtheorem{remark}[theorem]{Remark}
\newtheorem{lemma}[theorem]{Lemma}
\newtheorem{assumption}[theorem]{Assumption}
\renewcommand{\P}{{\mathbb P}}
\newcommand{\cc}{\text{C}}
\newcommand{\dd}{\mathrm{d}}
\newcommand{\bb}{\text{B}}
\newcommand{\LL}{\mathrm{L}}
\newcommand{\rhs}{r.~h.~s.\,}
\newcommand{\lhs}{l.~h.~s.\,}
\newcommand{\pp}{\mathrm{P}}
\newcommand{\m}{\mathrm{m}}
\definecolor{darkred}{rgb}{0.9,0.1,0.1}
\definecolor{darkblue}{rgb}{0,0,0.7}
\definecolor{darkgreen}{rgb}{0,0.5,0}
\begin{document}
\title[Annealed quantitative estimates for the 2D-discrete random matching problem]{Annealed quantitative estimates for the quadratic 2D-discrete random matching problem}
\author{  Nicolas Clozeau \address[Nicolas Clozeau]{IST Austria, Austria} \email{nicolas.clozeau@ist.ac.at} \hspace*{0.5cm} Francesco Mattesini \address[Francesco Mattesini]{Universit\"at M\"unster \& MPI Leipzig,  Germany} \email{francesco.mattesini@uni-muenster.de}  } 
\thanks{NC has received funding from the European Research Council (ERC) under the Eu\-ropean Union’s Horizon 2020 research and innovation programme (grant agreement No 948819) \includegraphics[height=\fontcharht\font`\B]{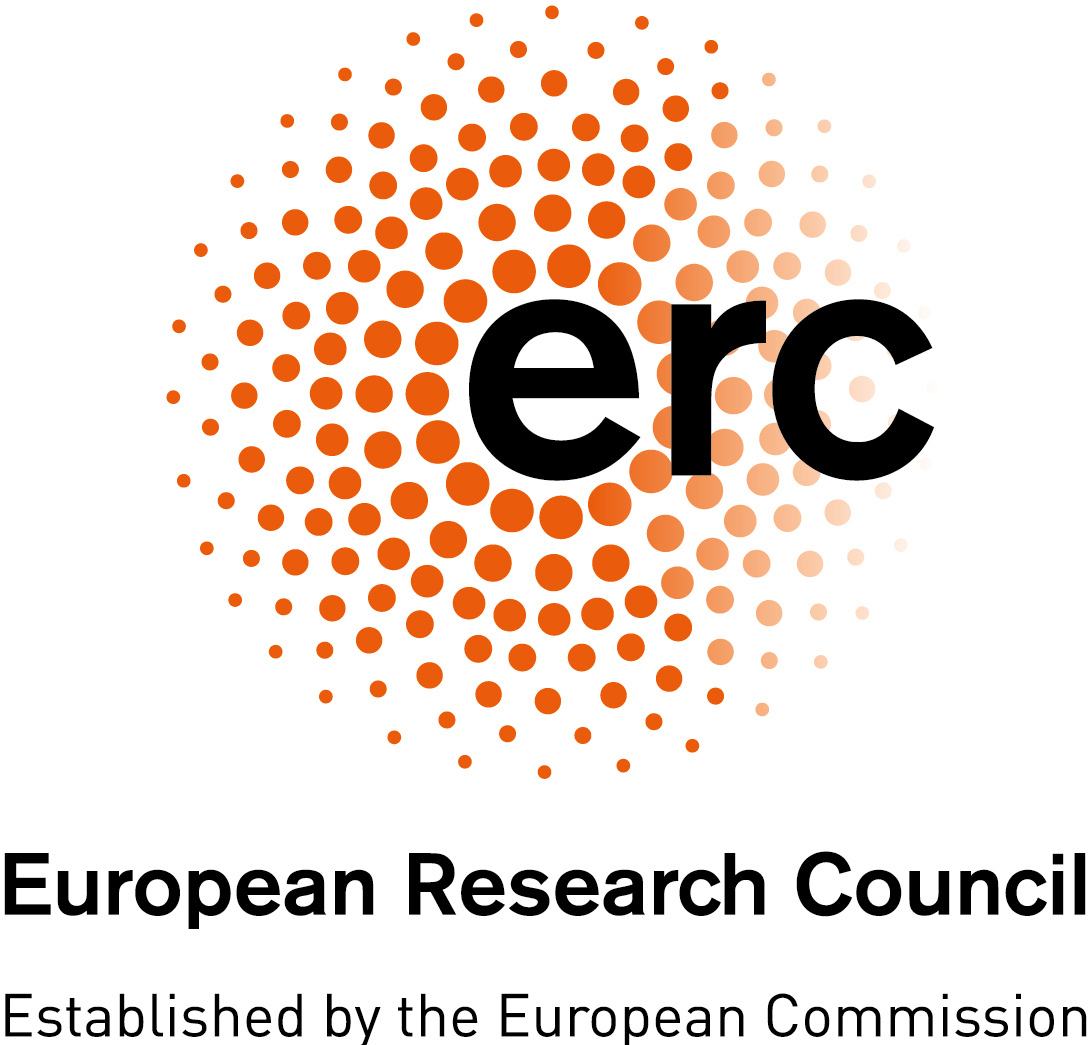}\,\includegraphics[height=\fontcharht\font`\B]{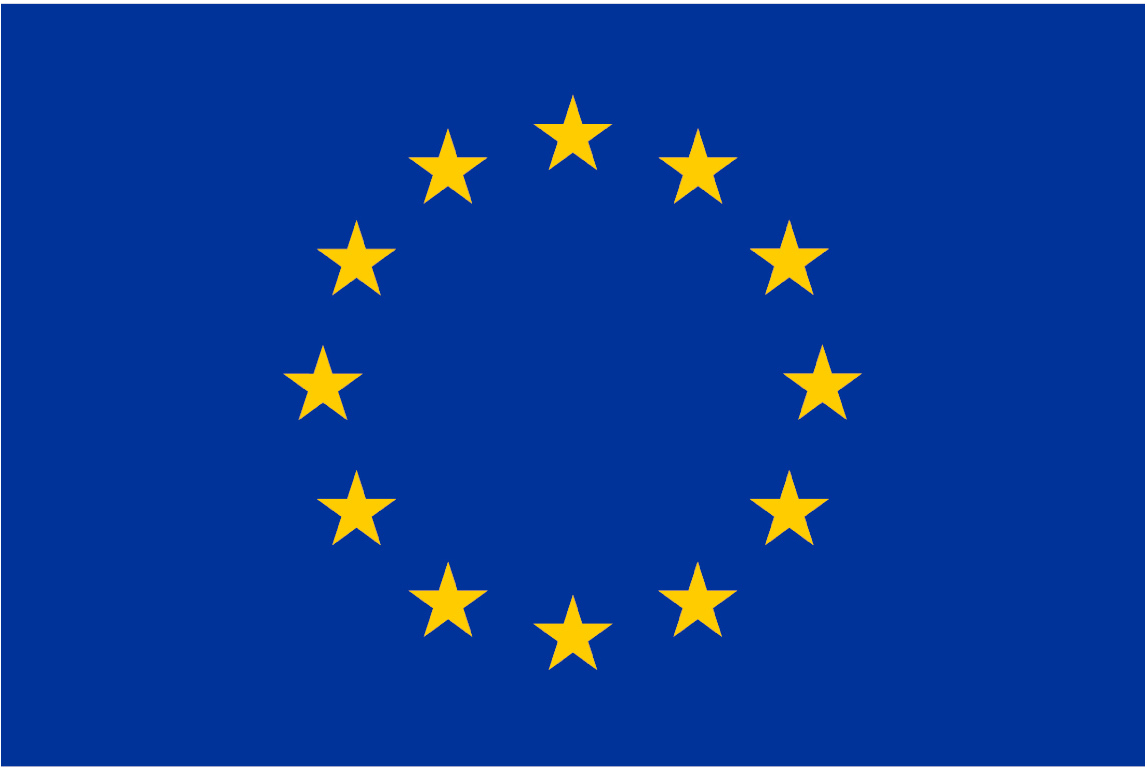}}
\thanks{FM is supported by the Deutsche Forschungsgemeinschaft (DFG, German Research Foundation) through the SPP 2265 {\it Random Geometric Systems}. FM has been funded by the Deutsche Forschungsgemeinschaft (DFG, German Research Foundation) under Germany's Excellence Strategy EXC 2044 -390685587, Mathematics M\"unster: Dynamics--Geometry--Structure. FM has been funded by the Max Planck Institute for Mathematics in the Sciences. }
%
%
\begin{abstract}
We study a 
random matching problem on closed compact $2$-dimensional Riemannian manifolds (with respect to the squared Riemannian distance), with samples of random points whose common law is absolutely continuous with respect to the volume measure with strictly positive and bounded density. We show that given two sequences of numbers $n$ and $m=m(n)$ of points, asymptotically equivalent as $n$ goes to infinity, the optimal transport plan between the two empirical measures $\mu^n$ and $\nu^{m}$ is quantitatively well-approximated by $\big(\mathrm{Id},\exp(\nabla h^{n})\big)_\#\mu^n$ where $h^{n}$ solves a linear elliptic PDE obtained by a regularized first-order linearization of the Monge-Ampère equation. This is obtained in the case of samples of correlated random points for which a stretched exponential decay of the $\alpha$-mixing coefficient holds and for a class of discrete-time sub-geometrically ergodic Markov chains having a unique absolutely continuous invariant measure with respect to the volume measure.

%
\end{abstract}
\maketitle
\begin{center}
\textbf{Keywords: }Optimal transport $\cdot$ Matching problem $\cdot$ Quantitative estimates
\end{center}
\tableofcontents
\section{Introduction and statement of the main results}\label{introsec}
\subsection{The random matching problem and its asymptotic}\label{TMP}
%

The random matching problem is a popular optimization problem at the interface between analysis and probability with applications in many different fields such as statistical physics \cite{caracciolo2014scaling, MezPar}, computer science \cite{CSMatch} and economics \cite{EcMatch, GalShap}. Within the mathematical literature, it has been subject of intense studies due to its interactions with many areas, including for instance graph theory \cite{lovasz2009matching} and geometric probability \cite{steele1997probability}. In this paper we focus on one of its simple versions. Let $\{X_k\}_{1\leq k\leq n}$ and $\{Y_k\}_{1\leq k\leq m}$ (with possibly $m>n$) be two families of random points on a compact Riemannian manifold $\mathcal{M}$ (endowed with the Riemannian distance $\dd$). We are interested in the quadratic matching problem
\begin{equation}\label{eq:transintr}
\min_{\pi\in\Pi_{nm}} \sum_{i=1}^n \sum_{j=1}^m \pi_{ij}\,\mathrm{d}^2(X_i ,Y_j)
\end{equation}
where
$$\Pi_{nm}:=\bigg\{\pi\in [0,1]^{n\times m}\ \Big\vert\ \sum_{i=1}^n \pi_{ij} = \tfrac{1}{m}\quad\text{and}\quad\sum_{j=1}^{m} \pi_{ij} = \tfrac{1}{n}\bigg\}.$$
Classically, \eqref{eq:transintr} can be phrased in terms of a transport problem. Indeed, letting 
\begin{equation}\label{eq:empmeas}
\mu^n:=\frac1n\sum_{i=1}^n \delta_{X_i}\quad\text{and}\quad\nu^m:=\frac1m \sum_{j=1}^m \delta_{Y_j},
\end{equation}
be the empirical measures associated with the two point clouds, the linear programming problem \eqref{eq:transintr} amounts to determine the quadratic Wasserstein distance $\ W_2^2(\mu^n,\nu^m)$. 

\medskip
In the special case $n=m$ the Birkhoff-von Neumann  Theorem provides a correspondence between \eqref{eq:transintr} and the usual bipartite matching
\begin{equation}\label{eq:minweightgraph}
\min_{\sigma\in \mathcal{S}_{n}} \sum_{i=1}^n \mathrm{d}^2(X_i,Y_{\sigma(i)}),
\end{equation}
where $\mathcal{S}_{n}$ denotes the set of injective maps $\sigma: \{1, \dots, n\} \rightarrow \{1, \dots, n\}$. 
Indeed, since $\Pi_{nn}$ is a convex polytope, minimizers in \eqref{eq:transintr} have to be searched among extremal points. By the Birkhoff-von Neumann Theorem \cite[Lemma 2.1.3]{bapat}, the latter are nothing but permutation matrices (up to a factor $\tfrac{1}{n}$). 

%
%
%
%
%

%
\medskip
%

%
%
%
%

%
\medskip

A first natural question is to understand the asymptotics of \eqref{eq:transintr} as $n,m\uparrow\infty$. For the same number of samples $n=m$ and independently and identically distributed (i.i.d.) on the unit square $[0,1]^d$, the scaling of the cost \eqref{eq:transintr} has been well understood in the mathematical and statistical physics literature. A simple heuristic argument, see for instance \cite{MezPar}, suggests that given a point $X_i$, we can find a point $Y_j$ within a volume of order  $O(n^{-1})$ with high probability. For this reason, the typical inter-point distance is of order  $O(n^{-\frac1d})$ suggesting that the scaling of \eqref{eq:transintr} is of order  $O(n^{-\frac2d})$. Although attractive, this heuristic turns out to be unfortunately false in low dimension showing a critical behavior when $d=2$. This critical case is the one on which we focus on in this paper. Ajtai, Koml\'os and Tusn\'ady in \cite{AKT84} were the first to show that, for i.i.d. uniform samples, a logarithmic correction is needed, deriving\footnote{We use the notation $A \lesssim B$ if there exists a global constant $C>0$, which may only depend on $d$, such that $A\le CB$. We write $A\sim B$ if both $A \lesssim B$ and $B \lesssim A$ hold.}
\begin{equation}\label{eq:unifmatchingrates}
\mathbb{E}\big[W_2^2 (\mu^n, \nu^n)\big]\sim \frac{\log(n)}{n},
\end{equation}
extended later by Talagrand in \cite{talagrand1992ajtai} for clouds of i.i.d. points which are distributed accordingly to more general common law. A recent breakthrough was obtained within the physics community by Caracciolo, Lucibello, Parisi and Sicuro in \cite{caracciolo2014scaling}, and further developed by Caracciolo and Sicuro in \cite{caracciolo2015scaling} and by Sicuro in \cite{sicuro2017euclidean}, where the asymptotics of the cost are formally derived thanks to a novel PDE approach and optimal transport theory rather than combinatorics. A couple of years later, in general $2$-dimensional compact Riemannian manifolds without boundary, the first-order asymptotic has been rigorously justified by Ambrosio, Stra and Trevisan in \cite{ambrosio2019pde} for i.i.d. uniform samples and recently extended by Ambrosio, Goldman and Trevisan in \cite{ambrosio2021quadratic} for samples distributed accordingly to more general laws which are absolutely continuous (with H\"older continuous density) w.r.t. the volume measure $\dd \mathrm{m}$, leading to 
\begin{equation}\label{eq:AsymptoticCost}
\lim_{n\rightarrow\infty} \frac{n}{\log(n)} \mathbb{E}\big[W_2^2 (\mu^n,\nu^n)\big] = \frac{\vert\mathcal{M}\vert}{2\pi},
\end{equation}
where $|\mathcal{M}|$ denotes the Lebesgue measure of $\mathcal{M}$. The case $n\neq m$ with $n,m\uparrow\infty$ with similar rates is also covered, see \cite[Theorem 1.2]{ambrosio2021quadratic}. 

\medskip
The novel approach introduced in \cite{caracciolo2014scaling}, later revised in \cite{benedetto2020euclidean}, consists in a linearization of the Monge-Amp\`ere equation that allows for an explicit description of the cost thanks to the linearized proxies (see Section \ref{sec:lin} for more details). The aim of this work is to quantitatively justify the linearization ansatz in terms of convergence of the approximating minimizers of \eqref{eq:transintr} towards the optimal ones. In particular, we are interested in the case where the points are identically distributed with a common law $\rho\,\dd\m$ (where we recall that $\dd\m$ denotes the volume measure) and $\rho$ satisfies for some $\lambda,\Lambda>0$
%
\begin{equation}\label{Ellipticity}
\lambda\leq\rho\leq\Lambda.
\end{equation}
To the best of our knowledge, there are only few results on the asymptotic behavior of the transport map and they are so far limited to the case of i.i.d. uniform samples in the study of the semi-discrete matching problem (that is couplings between $\mu^n$ and $\dd\m$), see the work of Ambrosio, Glaudo and Trevisan in \cite{ambrosio2019optimal}. In connection with this work, quantitative estimates on the optimal map for the matching between the Lebesgue measure and Poisson clouds have been obtained by Goldman, Huesmann and Otto in \cite{GHO1} and Goldman and Huesmann in \cite{GH}. 

\medskip

Our extension in this paper is fourfold: First, we look at more general distribution of points and we consider the case of general densities $\rho$ satisfying \eqref{Ellipticity}. Second, we do not assume independence and we consider samples which may possess correlations. Third, we do not restrict the analysis to the semi-discrete matching problem and we also investigate the ansatz for the full matching problem \eqref{eq:transintr}. 
Finally, we investigate the case where the points are not identically distributed and we extend our result to a class of sub-geometrically ergodic Markov chains.

\medskip

We finally mention that the effectiveness of the linearization ansatz introduced in \cite{caracciolo2014scaling} is not only limited to the case of i.i.d. distributed points on bounded domains, but it can be employed in many different settings. See for instance \cite{CagliotiPieroni} for an interesting application to the matching on unbounded domains, \cite{Ledoux, Ledoux2, Ledoux3} for an application to Gaussian matching, \cite{Jonas} for an application in random matrix theory, \cite{Wang2, Wang3, Wang, HMTfBm} for an application to a continuous instance of the matching problem, i.e. when the empirical measure is replaced by the occupation measure of a stochastic process. It is worth to further mention that these techniques can also be employed when considering the matching problem with $p$-costs in higher dimension, see \cite{GoldTrev} and when considering a larger class of optimization problems, see \cite{GoldTrevOpt}.

\subsection{Linearization ansatz}\label{sec:lin}
We now briefly reproduce the linearization ansatz introduced in \cite{caracciolo2014scaling}. For simplicity, we consider the case $\mathcal{M} = \mathbb{T}^2$, $n=m$ and i.i.d. samples with common distribution $\rho \, \dd \mathrm{m}$. Let $T^n$ be an optimal transport map (whose existence is ensured by Brenier's Theorem \cite{Brenier}) between $\mu^n$ and $\nu^n$. Based on the transport relation $T^n_{\#}\mu^n=\nu^n$ and a change of variables, $T^n$ solves (formally) the Monge-Amp\`ere equation 
\begin{equation}\label{eq:MongAmp}
(\nu^n\circ T^n) \det(\nabla T^n) = \mu^n.
\end{equation}
Since the cost is quadratic, by \cite[Theorem 1.25]{Santambrogio}, there exists a function $h^n$ such that $T^n= \mathrm{Id}+\nabla h^n$. Applying the law of large numbers, we further have the weak convergences $\mu^n,\nu^n {\rightharpoonup}\rho\, \dd \mathrm{m}$ as $n\uparrow \infty$ so that we expect $T^n\approx \mathrm{Id}$ as $n\uparrow\infty$. Thus, this suggests that the correction $\nabla h^n$ is small as $n\uparrow\infty$, allowing to perform (formally) the Taylor expansions
\begin{equation}\label{eq:TaylExp}
\nu^n \circ T^n \approx \nu^n + \nabla \nu^n\cdot\nabla h^n\quad\text{and}\quad \det(\nabla T^n) \approx 1 + \Delta h^n.
\end{equation}
Plugging \eqref{eq:TaylExp} into \eqref{eq:MongAmp}, neglecting the higher order terms and replacing $\mu^n$ by $\rho$ yields  
\begin{equation}\label{eq:ApproxPDE}
\nabla \cdot \rho \nabla h^n = \mu^n-\nu^n.
\end{equation}
This formal linearization suggests the following two conjectures 
\begin{equation}\label{eq:approxcost}
\lim_{n\uparrow\infty}\bigg\vert\mathbb{E}\big[W_2^2 (\mu^n, \nu^n)\big]-\mathbb{E}\bigg[\int_{\mathbb{T}^2} |\nabla h^n|^2\rho\,\dd\m\bigg]\bigg\vert=0\quad\text{and}\quad\lim_{n\uparrow\infty}\int_{\mathbb{T}^2}\vert T^n-(\text{Id}+\nabla h^n)\vert^2=0\text{ a.s.}
\end{equation}
Unfortunately, \eqref{eq:approxcost} cannot hold as it is, since the solution of \eqref{eq:ApproxPDE} does not belong to $\mathrm{H}^1$ due to the roughness of the source term. To overcome this, following the strategy in \cite{ambrosio2019pde}, a regularization using the heat-semigroup at time $\sim \frac{1}{n}$ (up to logarithmic corrections) is made. Doing so, the first item of \eqref{eq:approxcost} turns out to be true, leading to the result \eqref{eq:AsymptoticCost} (see for instance \cite{ambrosio2019finer} for a convergence rate). 

\medskip
\subsection{Formulation of the main results}
%
For the remainder of the paper $\mathcal{M}$ denotes a $2$-dimensional connected and compact Riemannian manifold without boundary (or the square $[0,1]^2$) endowed with the Riemannian distance $\dd$. For $t>0$ we denote by $p_t$ the fundamental solution of the heat operator $\partial_t-\Delta$ on $\mathcal{M}$, where $\Delta$ denotes the Beltrami-Laplace operator. We define the heat semigroup $(\text{P}_t)_{t>0}$ via its action on probability measures $\mu \in \mathcal{P}(\mathcal{M})$ and square integrable functions $f\in\LL^2(\mathcal{M})$
\[
\text{P}_t \mu := \int_\mathcal{M} p_t(\cdot,y)\dd \mu(y) \quad\text{and}\quad \text{P}_t f:= \int_{\mathcal{M}} p_t(\cdot,y) f(y)\dd \m(y).
\]
%

\medskip
We first introduce the class of correlated point clouds that we consider for studying the matching problem \eqref{eq:transintr}. This class concerns point clouds $\{X_i\}_i$ for which the correlations between points decay at an exponential rate, where the correlations are measured in terms of the $\alpha$-mixing coefficient given by, for any $\ell\geq 1$
\begin{equation}\label{AlphaMixing}
\alpha_\ell:=\sup_{k\geq 1}\sup\bigg\{\frac{\text{cov}(f,g)}{\|f\|_{\LL^{\infty}}\|g\|_{\LL^{\infty}}},\quad f\in\LL^{\infty}(\sigma\{X_j,j\leq k\})\quad \text{and}\quad g\in\LL^{\infty}(\sigma\{X_j,j\geq \ell+k\})\bigg\},
\end{equation}
and the $\beta$-mixing coefficient given by\footnote{We denote by $\mathbb{P}_X$ the law of a random variable $X$.}
\begin{equation}\label{BetaMixing}
\beta_{ij}:=\sup_{\vert F\vert\leq 1}\bigg\vert\int_{\mathcal{M}\times\mathcal{M}}F\,\dd (\mathbb{P}_{(X_i,X_j)}-\mathbb{P}_{X_i}\otimes\mathbb{P}_{X_j})\bigg\vert\quad \text{for any $i,j\geq 1$}.
\end{equation}
\begin{assumption}[Correlated point clouds]\label{Assumptions}
We consider point clouds $\{X_i\}_{i}\subset \mathcal{M}$ which are identically distributed according to $\rho\,\dd\m$ where $\rho$ satisfies \eqref{Ellipticity}. We further assume decay of the correlations in the form of 
\begin{equation}\label{DecatBetaMixing}
\sup_{n\geq 1}\frac{1}{n}\sum_{1\leq i<j\leq n}\beta_{ij}<\infty,
\end{equation}
and there exist $a,b>0$ and $\eta\in (0,\infty]$ such that 
\begin{equation}\label{DecayAlphaMixing}
\alpha_\ell\leq a\exp\big(-b\ell^{\eta}\big)\quad\text{for any $\ell\geq 1$.}
\end{equation}
\end{assumption}
Assumption \ref{Assumptions} is made to ensure good concentration properties of the point clouds. On the one hand, under \eqref{DecatBetaMixing}, the cost $W^2_2(\mu^n,\nu^n)$ behaves as in the i.i.d. case \eqref{eq:unifmatchingrates} (cf. \cite[Theorem 2]{borda2021empirical} and Appendix \ref{sec:matchcost}). On the other hand, the sub-exponential decay \eqref{DecayAlphaMixing} of the $\alpha$-mixing coefficient ensures sub-exponential concentration properties (cf. \cite[Theorem 1]{merlevede2011bernstein} and Proposition \ref{BersteinCorrelated}), which is necessary to run our argument. We refer the reader to Section \ref{StrategyOfProofSection} for further technical details.

\medskip

Our first main result concerns the approximation of transport plans coupling $\{\mu^n\}_n$ and $\{\nu^m\}_m$ defined in \eqref{eq:empmeas}. We justify the formal linearization of the Monge-Amp\`ere equation achieved in Section \ref{sec:lin} in an annealed quantitative way (i.e. in expectation): We show that, a suitable regularization of, the plan $\big(\text{Id},\exp(\nabla h^n)\big)_\#\mu^n$, with $h^n$ defined in \eqref{eq:ApproxPDE}, provides a good approximation when measuring the error with respect to the $W_2$-Wasserstein distance in the product space $\mathcal{M}\times\mathcal{M}$ endowed with the metric 
\begin{equation}\label{DistanceTransportPlan}
\delta^2 \big((x,y),(z,w)\big):=\dd^2(x,z)+\dd^2(y,w).
\end{equation}
The density $\rho$ will need further regularity in form of fractional Sobolev spaces defined as, for some $\varepsilon>0$
\begin{equation}\label{DefHepsilon}
\mathrm{H}^{\varepsilon}:=\Big\{f: \mathcal{M}\rightarrow\mathbb{R}\ \Big\vert \|f\|^2_{\mathrm{H}^\varepsilon}:= \sum_{k\geq 1}\lambda^{2\varepsilon}_k\vert \hat{f}(k)\vert^2<\infty\Big\},
\end{equation}
where $\{\lambda_k,\phi_k\}_{k}$ denote the eigenvalues and eigenvectors of $-\Delta$ on $\mathcal{M}$ and $\hat{f}(k)=\int_{\mathcal{M}}f\,\phi_k$ denotes the Fourier modes of $f$.
Finally, we denote by $\dot{\mathrm{H}}^1$ the $\LL^2$-based Sobolev space 
\begin{equation}\label{eq:H1}
\dot{\mathrm{H}}^1:=\Big\{f: \mathcal{M}\rightarrow \mathbb{R}\ \Big\vert\ \int_{\mathcal{M}}\vert \nabla f\vert^2<\infty\quad\text{with } \int_{\mathcal{M}} f\dd\m=0\Big\}.
\end{equation}
\begin{theorem}[Approximation of the transport plan]\label{MainResultMatchingClouds}
Let $\rho\in \mathrm{H}^{\varepsilon}$ for some $\varepsilon>0$ satisfying \eqref{Ellipticity} and $\{\mu^n\}_n$ and $\{\nu^{m}\}_m$ be defined in \eqref{eq:empmeas} (for $m=m(n)$ with some given increasing map $m:\mathbb{N}\rightarrow\mathbb{N}$) with point clouds satisfying Assumption \ref{Assumptions} and such that there exists $q\in [1,\infty)$ for which $\frac{m(n)}{n}\underset{n\uparrow\infty}{\rightarrow} q$. We consider\footnote{where we impose additional Neumann boundary conditions in the case $\mathcal{M}=[0,1]^2$} $h^{n,t}\in \dot{\mathrm{H}}^1$ the weak solution of
\begin{equation}\label{eq:f^nmt}
\nabla\cdot \rho\nabla h^{n,t} =\mu^{n,t}-\nu^{m,t},
\end{equation}
for any $t\in(0,1)$ with $\mu^{n,t}:=\pp_t\mu^n$ and $\nu^{m,t}:=\pp_t\nu^m$.

\medskip

There exist an exponent $\kappa>0$, a deterministic constant $C$ and a random variable $\mathcal{C}_n$ both depending on $\lambda, \Lambda$ and $\mathcal{M}$ for which given $t=\frac{\log^{\kappa}(n)}{n}$ and
%
\begin{equation}\label{DefApproxPlan}
\gamma^{n,t}:=\big(\mathrm{Id},\exp(\nabla h^{n,t})\big)_\#\mu^{n,t},
\end{equation}
it holds
\begin{equation}\label{ApproximationTransportPlan}
\inf_{\pi}W^2_2(\pi,\gamma^{n,t})\leq \mathcal{C}_n\frac{\log(n)}{n}\sqrt{\frac{\log\log(n)}{\log(n)}}\quad\text{with \,$\sup_{n\geq 1}\mathbb{E}[\tfrac{1}{C}\mathcal{C}_n]\leq 1$,}
\end{equation}
where the $\inf$ runs over all optimal transport plans $\pi$ between $\mu^n$ and $\nu^m$.

\medskip

Furthermore, if \eqref{DecayAlphaMixing} holds with $\eta>2$, the assumption \eqref{DecatBetaMixing} can be dropped and it holds
\begin{equation}\label{ApproximationTransportPlanBis}
\inf_{\pi}W^2_2(\pi,\gamma^{n,t})\leq \mathcal{C}_n\frac{\log(n)}{n}\sqrt{\frac{\log\log(n)}{\log^{1-\frac{2}{\eta}}(n)}}\quad\text{with }\sup_{n\geq 1}\mathbb{E}[\exp(\tfrac{1}{C}\mathcal{C}^{\frac{1}{2}}_n))]\leq 2,
\end{equation}
where the $\inf$ runs over all optimal transport plans $\pi$ between $\mu^n$ and $\nu^m$.
%
%
\end{theorem}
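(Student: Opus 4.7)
I would bound $W_2^2(\pi,\gamma^{n,t})$ by constructing an explicit four-marginal coupling of $\pi$ with $\gamma^{n,t}$. Fix an optimal coupling $\Gamma$ of $\mu^n$ and $\mu^{n,t}$; then sample $(X,Z)\sim\Gamma$, $Y\sim\pi(\cdot|X)$, and set $W=\exp_Z(\nabla h^{n,t}(Z))$. The resulting law on $\mathcal{M}^4$ has marginals $\pi$ on $(X,Y)$ and $\gamma^{n,t}$ on $(Z,W)$, so after a triangle inequality and Lipschitz continuity of the exponential map,
\[ W_2^2(\pi,\gamma^{n,t}) \;\le\; C\bigl(1+\|\nabla^2 h^{n,t}\|_\infty^2\bigr)\,W_2^2(\mu^n,\mu^{n,t}) + 2\int \dd^2\!\bigl(y,\exp_x(\nabla h^{n,t}(x))\bigr)\,d\pi(x,y). \]
The heat-semigroup regularization at time $t=\log^\kappa n /n$ gives $\|\nabla^2 h^{n,t}\|_\infty\lesssim 1$ by Meyers-type estimates for $-\nabla\cdot\rho\nabla$ (which is where the regularity $\rho\in\mathrm{H}^\varepsilon$ enters), and $\mathbb{E}[W_2^2(\mu^n,\pp_t\mu^n)]\lesssim t$, contributing at most $O(\log^\kappa n /n)$ to the final bound.

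\textbf{Reduction to an energy gap.} The main term $\int \dd^2(y,T^{n,t}(x))\,d\pi(x,y)$, with $T^{n,t}(x):=\exp_x(\nabla h^{n,t}(x))$, is controlled by the linearization ansatz of Section~\ref{MainResultSection}. Taylor-expanding the squared Riemannian distance along the geodesic from $x$ to $y$, combined with the equation \eqref{eq:f^nmt} tested against the Kantorovich potential, yields the two-sided identity
\[ W_2^2(\mu^{n,t},\nu^{m,t}) \;=\; \int_\mathcal{M}|\nabla h^{n,t}|^2\rho\,\dd\m + O\bigl(R^{n,t}\bigr), \]
with a cubic/quartic remainder $R^{n,t}\lesssim \|\nabla h^{n,t}\|_\infty\!\int|\nabla h^{n,t}|^2$. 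The $c$-concavity of the Kantorovich dual then converts this energy identity into a quantitative stability bound reducing $\int \dd^2(y,T^{n,t}(x))\,d\pi$ to $W_2^2(\mu^n,\nu^m)-\int|\nabla h^{n,t}|^2\rho\,\dd\m + R^{n,t}$, up to the regularization error already absorbed in Step~1.

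\textbf{Probabilistic estimates.} Taking expectations, $\mathbb{E}\int|\nabla h^{n,t}|^2\rho\,\dd\m$ is evaluated by spectral decomposition in Laplace eigenfunctions: the heat truncation at $t\sim\log^\kappa n /n$ removes frequencies above $\sim n/\log^\kappa n$ from the source $\mu^n-\nu^m$, while assumption \eqref{DecatBetaMixing} preserves the independent-sample covariance bound $\mathrm{Var}(\widehat{\mu^n}(k))\lesssim 1/n$, yielding the sharp $\log n/n$ scale. The additional $\sqrt{\log\log n/\log n}$ factor captures the Taylor remainder $R^{n,t}$, bounded via sub-Gaussian concentration of $\|\nabla h^{n,t}\|_\infty$ obtained from Meyers self-improvement and the $\alpha$-mixing tail \eqref{DecayAlphaMixing}.

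\textbf{Main obstacle and the strengthened bound.} The hardest step is the last one: reconciling the PDE's sensitivity to the roughness of the random source with the weak dependence of the samples while tracking constants finely enough to preserve the $\sqrt{\log\log n/\log n}$ improvement requires an annealed Meyers exponent for $\mu^{n,t}-\nu^{m,t}$, uniform high-moment control of $\|\nabla h^{n,t}\|_\infty$, and a careful interaction between \eqref{DecatBetaMixing} and the spectral expansion. For the strengthened bound \eqref{ApproximationTransportPlanBis} with $\eta>2$, the $\beta$-mixing hypothesis \eqref{DecatBetaMixing} becomes superfluous: stretched-exponential $\alpha$-mixing permits Bernstein-type concentration inequalities for sums of weakly dependent functionals (\`a la Merlev\`ede-Peligrad-Rio), yielding both the improved logarithmic factor and the sub-exponential tail on $\mathcal{C}_n$ asserted in \eqref{ApproximationTransportPlanBis}; the threshold $\eta>2$ is precisely the one securing uniform-in-$k$ sub-Gaussian control on the Fourier modes $\widehat{\mu^n}(k)$.
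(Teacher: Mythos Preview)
Your proposal has two genuine gaps that would make the argument fail as written.

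\textbf{First, the $C^{1,1}$ bound on $h^{n,t}$.} You assert that $\|\nabla^2 h^{n,t}\|_{\infty}\lesssim 1$ follows from ``Meyers-type estimates for $-\nabla\cdot\rho\nabla$'' and that this is where $\rho\in\mathrm{H}^{\varepsilon}$ enters. This is not correct: Meyers only delivers $L^{\bar q}$-integrability of $\nabla h^{n,t}$ for some $\bar q>2$, never a pointwise Hessian bound. To obtain $\|\nabla^2 h^{n,t}\|_{\infty}<\infty$ you would need Schauder theory, which requires the coefficient $\rho$ to be at least $C^{0,\alpha}$; the hypotheses $\rho\in\mathrm{H}^{\varepsilon}\cap L^{\infty}$ do not imply this. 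The paper deals with exactly this obstruction by introducing a \emph{second} regularization $\rho\mapsto\rho_{\delta}:=\mathrm{P}_{\delta}\rho$ and working with the auxiliary solution $h^{n,t}_{\delta}$ of $-\nabla\cdot\rho_{\delta}\nabla h^{n,t}_{\delta}=\mu^{n,t}-\nu^{m,t}$, for which Schauder applies. The role of $\rho\in\mathrm{H}^{\varepsilon}$ is then to quantify $\|\rho_{\delta}-\rho\|_{L^2}\lesssim\delta^{\varepsilon}$, which controls the new regularization error $W_2^2(\gamma^{n,t},\gamma^{n,t}_{\delta})$ via Meyers (this is where Meyers is actually used). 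Without this extra layer your coupling bound cannot even be written down, because the Lipschitz constant of $x\mapsto\exp_x(\nabla h^{n,t}(x))$ is not under control.

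\textbf{Second, the smoothing error.} You bound $\mathbb{E}[W_2^2(\mu^n,\mu^{n,t})]\lesssim t=\log^{\kappa}(n)/n$ and say this contributes $O(\log^{\kappa}n/n)$. But the exponent $\kappa$ produced by the fluctuation argument is large (in the paper's proof one needs $\kappa_2>4\kappa_1+1/\eta+2\upsilon$), so $\log^{\kappa}(n)/n\gg\frac{\log n}{n}\sqrt{\frac{\log\log n}{\log n}}$ and this single term already swamps the target rate \eqref{ApproximationTransportPlan}. The paper explicitly flags this: the trivial contractivity $W_2^2(\mu,\mathrm{P}_t\mu)\lesssim t$ is useless here, and one must prove the refined estimate $\mathbb{E}[W_2^2(\mu^n,\mu^{n,t})]\lesssim\frac{\log\log n}{n}$ (Proposition~\ref{Contractivity}), obtained via a Berry--Esseen type inequality and a Fourier computation that exploits the special structure of empirical measures in dimension two. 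Your sketch never mentions this ingredient, and without it the final rate is lost.

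Beyond these two points, your ``energy gap'' reduction in Step~2 is too vague to assess; the paper instead invokes the quantitative stability theorem for transport maps (Theorem~\ref{StabilityResultMap}) applied after an approximation by absolutely continuous measures, together with a Moser-coupling estimate for $W_2^2(\nu^{m,t},\exp(\nabla h^{n,t}_{\delta})_{\#}\mu^{n,t})$, which is a rather different mechanism from the Kantorovich-potential argument you outline.
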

Our second main result concerns the particular case of the semi-discrete matching problem, i.e. optimal coupling between the common law $\rho\,\dd \m$ and $\{\mu^n\}_n$. We know from McCann's theorem \cite{McCann2001} that there exists a unique optimal transport map $T^n$, that is the optimal transport plan $\pi^n$ can be written as 
$$\pi^n=\big(\text{Id},T^n\big)_\#\rho\,\dd m.$$
We show that $T^n$ can be approximated in an annealed quantitative way in $\LL^2$ by (a suitable regularized version of) the solution of \eqref{eq:ApproxPDE} with $\nu^m$ replaced by $\rho$.
%
%
%
%
%
%
%
%
%
%

%
%

%
%
%

%
%
\begin{theorem}\label{th1}
Let $\rho\in \mathrm{H}^{\varepsilon}$ for some $\varepsilon>0$ satisfying \eqref{Ellipticity} and $\{\mu^{n}\}_n$ be defined in \eqref{eq:empmeas} with a point cloud satisfying Assumption \ref{Assumptions}. We consider $f^{n,t}\in {\rm \dot{H}^1}$ the weak solution of
\begin{equation}\label{eq:f^ntBis}
\nabla\cdot \rho\nabla f^{n,t} =\mu^{n,t}-\rho_t,
\end{equation}
where, for all $t\in (0,1)$, we recall that $\mu^{n,t}=\pp_t\mu^n$ and $\rho_t=\pp_t\rho$. Finally, we denote by $T^n$ the optimal transport map from $\rho\,\dd \m$ to $\mu^n$. 
\medskip

There exist an exponent $\kappa>0$, a deterministic constant $C$ and a random variable $\mathcal{C}_n$ both depending on $\lambda,\Lambda$ and $\mathcal{M}$ for which given $t=\frac{\log^{\kappa}(n)}{n}$, it holds
\begin{equation}\label{ApproximationTransportMap}
\int_\mathcal{M} \mathrm{d}^2\big(T^n,\mathrm{exp}(\nabla f^{n,t})\big)\dd \mathrm{m}\leq \mathcal{C}_n\frac{\log(n)}{n}\sqrt{\frac{\log\log(n)}{\log(n)}}\quad\text{with \,$\sup_{n\geq 1}\mathbb{E}[\tfrac{1}{C}\mathcal{C}_n]\leq 1$.}
\end{equation}
%
%

%
\medskip

Furthermore, if \eqref{DecayAlphaMixing} holds with $\eta>2$, the assumption \eqref{DecatBetaMixing} can be dropped and it holds
\begin{equation}\label{ApproximationTransportMapBis}
\int_\mathcal{M} \mathrm{d}^2\big(T^n,\mathrm{exp}(\nabla f^{n,t})\big)\dd \mathrm{m}\leq \mathcal{C}_n\frac{\log(n)}{n}\sqrt{\frac{\log\log(n)}{\log^{1-\frac{2}{\eta}}(n)}}\quad\text{with }\sup_{n\geq 1}\mathbb{E}[\exp(\tfrac{1}{C}\mathcal{C}_n^\frac12))]\leq 2.
\end{equation}
%
%
\end{theorem}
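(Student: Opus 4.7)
The plan is to follow the strategy devised for Theorem~\ref{MainResultMatchingClouds}, with simplifications afforded by the semi-discrete structure: since the second marginal $\rho\,\dd\m$ is absolutely continuous with density bounded below, McCann's theorem yields a unique optimal transport map $T^n$, and the target becomes an $\LL^2$-control on $T^n-\exp(\nabla f^{n,t})$ rather than a $W_2$-control on plans. The first step is a reduction to the regularized problem, namely controlling $\int_\mathcal{M} \dd^2(T^{n,t},\exp(\nabla f^{n,t}))\,\dd\m$, where $T^{n,t}$ denotes the optimal transport map from $\rho_t\,\dd\m$ to $\mu^{n,t}$. The error introduced by this replacement is controlled by $W_2^2(\mu^n,\mu^{n,t})$ and $W_2^2(\rho\,\dd\m,\rho_t\,\dd\m)$, both of order $t=\log^\kappa(n)/n$ up to logarithms via heat-flow Wasserstein estimates that exploit $\rho\in\mathrm{H}^\varepsilon$, together with the lower bound $\rho\geq\lambda$ needed to upgrade Wasserstein control of maps into pointwise $\LL^2$-control.

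The core deterministic step is a stability estimate in the spirit of \cite{GHO1,GH,ambrosio2019optimal}: on the event where $\|\nabla^2 f^{n,t}\|_{\LL^\infty}$ is sufficiently small for $\exp(\nabla f^{n,t})$ to be a diffeomorphism, one establishes
\[
\int_\mathcal{M}\dd^2\!\big(T^{n,t},\exp(\nabla f^{n,t})\big)\,\dd\m \lesssim \int_\mathcal{M}|\nabla f^{n,t}|^2\,\dd\m + \text{h.o.t.},
\]
where the higher-order terms involve mixed $\LL^p$-norms of $\nabla^2 f^{n,t}$ and $\nabla f^{n,t}$ together with the image-measure defect $\exp(\nabla f^{n,t})_\#(\rho_t\,\dd\m)-\mu^{n,t}$, the latter being automatically small from the linearized PDE \eqref{eq:f^ntBis}. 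Elliptic regularity for \eqref{eq:f^ntBis} at the heat-kernel scale $\sqrt{t}$ then expresses all these quantities in terms of negative Sobolev norms of the random source $\mu^{n,t}-\rho_t$.

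The final ingredient is the annealed concentration: a Littlewood--Paley decomposition reduces the estimate to bounding $\mathbb{E}\|\pp_{2^{-2k}}(\mu^n-\rho)\|^2_{\LL^2}$ on each dyadic scale $2^{-2k}\gtrsim t$. Using the covariance bound $|\mathrm{cov}(p_{2^{-2k}}(x,X_i),p_{2^{-2k}}(x,X_j))|\lesssim \alpha_{|i-j|}\|p_{2^{-2k}}\|^2_{\LL^\infty}$, the stretched exponential decay \eqref{DecayAlphaMixing} and the $\beta$-mixing summability \eqref{DecatBetaMixing} produce a variance of order $2^{2k}/n$ per dyadic scale; summation over the $O(\log n)$ relevant scales then yields the AKT rate $\log(n)/n$, with the $\sqrt{\log\log(n)/\log(n)}$ factor arising from an exponential-moment enumeration over scales. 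The strengthened bound \eqref{ApproximationTransportMapBis} under $\eta>2$ follows from a sharper sub-Gaussian concentration that replaces the role of \eqref{DecatBetaMixing} and delivers the exponential moment of $\mathcal{C}_n^{1/2}$.

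The main obstacle is the interplay between the deterministic stability estimate and the stochastic fluctuations: the $\LL^\infty$-smallness of $\nabla^2 f^{n,t}$ required to apply the stability lemma only holds with high probability at the regularization scale $\sqrt{t}$, so one must combine annealed elliptic regularity for the variable-coefficient equation \eqref{eq:f^ntBis} with stochastic integrability of the source to guarantee that the stability estimate applies on a set of overwhelming probability. Picking $\kappa$ large enough that $\exp(\nabla f^{n,t})$ is well-defined on this set, yet small enough that $t\ll 1/n$ up to logarithms, is precisely what preserves the sharp AKT rate in \eqref{ApproximationTransportMap}.
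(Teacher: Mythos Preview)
Your outline captures the broad architecture but misses the paper's central technical device, and this is a genuine gap. You propose to obtain the $\LL^\infty$-smallness of $\nabla^2 f^{n,t}$ via ``elliptic regularity for \eqref{eq:f^ntBis} at the heat-kernel scale $\sqrt{t}$''. But the coefficient $\rho$ in \eqref{eq:f^ntBis} is only assumed to lie in $\mathrm{H}^\varepsilon\cap\LL^\infty$, not in any H\"older class, so Schauder theory does not apply to $-\nabla\cdot\rho\nabla$ and there is no $\mathrm{C}^{1,1}$ control on $f^{n,t}$ at all. The paper circumvents this by a second regularization: it replaces $\rho$ by $\rho_\delta:=\pp_\delta\rho$ with $\delta=\log^{-\kappa_1}(n)$, solves $-\nabla\cdot\rho_\delta\nabla f^{n,t}_\delta=\mu^{n,t}-\rho_t$, and applies the stability Theorem~\ref{StabilityResultMap} with $S=\exp(\nabla f^{n,t}_\delta)$. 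Now Schauder estimates are available, but with constants blowing up polynomially in $\delta^{-1}$; Proposition~\ref{Fluctuation} tracks this blowup and shows that the choice of $\delta$ as an inverse power of $\log(n)$ still yields $\|\nabla f^{n,t}_\delta\|_{\LL^\infty}+\|\nabla^2 f^{n,t}_\delta\|_{\LL^\infty}\to 0$ with overwhelming probability. The resulting regularization error $\nabla(f^{n,t}_\delta-f^{n,t})$ is then controlled in $\LL^2$ via an energy estimate combined with Meyers' $\LL^{\bar q}$ bound (Proposition~\ref{prop1}), which is what allows H\"older-separating the factor $\|\rho_\delta-\rho\|_{\LL^{2(\bar q/2)'}}\lesssim\delta^{\varepsilon/(\bar q/2)'}$. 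None of this machinery appears in your plan.

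A secondary point: your attribution of the $\sqrt{\log\log(n)/\log(n)}$ factor to ``an exponential-moment enumeration over scales'' is not how it arises in the paper. It comes instead from the refined contractivity estimate of Proposition~\ref{Contractivity}, namely $W_2^2(\mu^{n,t},\mu^n)\lesssim\log\log(n)/n$ for the specific empirical-measure structure in dimension two; this term enters the stability inequality multiplied by $W_2(\rho\,\dd\m,\mu^n)\sim\sqrt{\log(n)/n}$, producing $\frac{\log(n)}{n}\sqrt{\log\log(n)/\log(n)}$. Your Littlewood--Paley sketch would recover the AKT rate $\log(n)/n$ for the energy $\int|\nabla f^{n,t}|^2$, but does not by itself explain the sub-logarithmic gain. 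Finally, your intermediate map $T^{n,t}$ (from $\rho_t\,\dd\m$ to $\mu^{n,t}$) is not used in the paper; the stability theorem is applied directly with source $\rho\,\dd\m$ and competitor $\exp(\nabla f^{n,t}_\delta)$, which avoids having to compare $T^n$ and $T^{n,t}$.
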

We finally mention that in the case where the eigenfunctions $\{\phi_k\}_{k}$ admit a uniform bound, the conclusions \eqref{ApproximationTransportPlanBis} and \eqref{ApproximationTransportMapBis} can be improved. We comment on the proof at the end of Sub-section \ref{ProofTransport}.
\begin{remark}\label{FlatGeometry}
Let $\{\mu^n\}_n$ and $\{\nu^m\}_m$ be as in Theorem \ref{MainResultMatchingClouds}. We assume that the family of eigenfunctions $\{\phi_k\}_k$ satisfies the uniform bound
\begin{equation}\label{FlatenessAss}
\sup_{k\geq 1}\|\phi_k\|_{\LL^\infty}<\infty.
\end{equation}
Then, \eqref{ApproximationTransportPlanBis} and \eqref{ApproximationTransportMapBis} hold true for $\eta>1$ with a convergence rate $\frac{\log(n)}{n}\sqrt{\frac{\log\log(n)}{\log^{1-\frac{1}{\eta}}(n)}}$ and the same stochastic integrability. Note that \eqref{FlatenessAss} typically holds when the geometry of $\mathcal{M}$ is flat, see \cite{toth2002riemannian}.
\end{remark}

\medskip

Theorem \ref{MainResultMatchingClouds} and Theorem \ref{th1} are not restricted to the case of identically distributed point clouds and we present in the next section and in Appendix \ref{ClassMarkovAppendix} a possible extension, using the same techniques, to a class of sub-geometrically ergodic discrete-time Markov chains.

\subsection{Extension to a class of sub-geometrically ergodic Markov chains}\label{Examples}
%
We first recall some basic facts on discrete-time Markov chains on $\mathcal{M}$. Such a Markov process is described by its initial distribution $\mu_0 \in \mathcal{P}(\mathcal{M})$ and its transition kernel $K$, that is a measurable map from $\mathcal{M}$ to the space of probability measures $\mathcal{P}(\mathcal{M})$. We recall that $K$ acts on $\mathcal{P}(\mathcal{M})$ in form of 
\begin{equation}\label{eq:actionkernel}
(K\mu)(A) = \int_{\mathcal{M}} K(\cdot,A)\,\dd\mu\quad\text{for every Borel set $A\subset \mathcal{M}$ and $\mu\in \mathcal{P}(\mathcal{M})$,}
\end{equation}
and likewise on bounded measurable function $\psi$ in form of
\begin{equation}\label{eq:actionkernelBis}
(K\psi)(x) = \int_{\mathcal{M}} \psi(y) \,K(x, \dd y)\quad\text{for any $x\in\mathcal{M}$.}
\end{equation}
%
Given an initial distribution $\mu_0\in \mathcal{P}(\mathcal{M})$, we recall that the law of a Markov chain $\{X_n\}_{n\geq 0}$ can be expressed with the help of the transition kernel, namely 
\begin{equation}\label{LawOfTheChain}
X_n\sim K^n\mu_0\quad\text{for any $n\geq 0$,}
\end{equation}
where $K^n$ stands for the $n^{\mathrm{th}}$-iteration of the kernel $K$. 

\medskip

We now introduce the class of discrete-time Markov chains we consider.
\begin{assumption}\label{ClassMarkov}
Let $\mathcal{M}$ be a $2$-dimensional compact Riemannian manifold. Let $\mu_0\in \mathcal{P}(\mathcal{M})$ and $\{X_n\}_{n\geq 1}\subset \mathcal{M}$ be a Markov chain with initial law $\mu_0$. We assume that the chain satisfies the two following conditions:
\begin{itemize}
\item[(i)]We assume that there exists a measurable function $k : \mathcal{M}\times\mathcal{M}\rightarrow [0,\infty)$ and $\lambda,\Lambda>0$ such that for any Borel set $A\subset\mathcal{M}$
\begin{equation}\label{AbsoluteContinuity}
K(x,A)=\int_{A}k(x,\cdot)\dd \m\quad\text{with $\lambda\leq k\leq \Lambda$}.
\end{equation}
\item[(ii)]We assume that there exist an unique invariant measure $\mu_\infty$, i.e. $K\mu_\infty=\mu_\infty$, constants $a,b>0$ and $\eta\in (0,1]$ as well as a map $V : \mathcal{M}\rightarrow [1,\infty)$ such that $\int_\mathcal{M}V\dd\mu_\infty,\int_{\mathcal{M}}V\dd\mu_0<\infty$, for which for any $\ell\geq 1$ and any 
$\phi\in\LL^\infty(\mathcal{M})$
\begin{equation}\label{ConvergenceRate}
\|K^\ell\phi-\mu_\infty(\phi)\|_V\leq a\exp(-b\ell^{\eta})\|\phi\|_\infty\quad\text{for any $x\in\mathcal{M}$},
\end{equation}
where 
$$\|\phi\|_V:=\sup \frac{\vert \phi\vert}{1+V}\quad\text{and}\quad\mu_\infty(\phi)=\int_{\mathcal{M}}\phi\,\dd\mu_\infty.$$
\end{itemize}
\end{assumption}
We now comment on the consequences of the above assumptions. First, the condition \eqref{AbsoluteContinuity} ensures that the invariant measure $\mu_\infty$ is absolutely continuous with respect to the volume measure with bounded density, namely
\begin{equation}\label{AbsolutelyContinuousInvariantMeasure}
\mu_\infty=\rho\,\dd \m\quad\text{with $\lambda\leq \rho\leq \Lambda$.}
\end{equation}
We briefly give the argument for \eqref{AbsolutelyContinuousInvariantMeasure}. Using the second item of \eqref{AbsoluteContinuity}, we have that the operator 
$$\mathcal{I}: f\in\LL^1\mapsto \int_{\mathcal{M}}k(\cdot,y)f(y)\dd\m(y)\quad\text{is compact.}$$
Furthermore, we note that the closed convex set $\mathcal{C}:=\{f\in \LL^1\,\vert\, \lambda\leq f\leq \Lambda\text{ and }\int_{\mathcal{M}}f=1\}$ is invariant under the action of $\mathcal{I}$. Therefore Schauder's fixed point theorem implies that $\mathcal{I}$ admits a fixed point in $\mathcal{C}$. Given such a fixed point $\rho$, it is clear that $\mu_\infty$ defined in \eqref{AbsolutelyContinuousInvariantMeasure} is an invariant measure according to \eqref{AbsoluteContinuity}. 

\medskip

Second, for irreducible and aperiodic Markov chains, the condition \eqref{ConvergenceRate} is satisfied when the kernel satisfies the following geometric drift condition: There exist a function $V: \mathcal{M}\rightarrow [1,\infty)$, a petite set $\mathcal{P}$ and a constant $C>0$ such that $\sup_{\mathcal{P}} V<\infty$ and
$$KV+\phi\circ V\leq V+C\mathds{1}_\mathcal{P},$$
with for large $x\geq 1$, $\phi(x)=c\frac{x}{\log^\alpha(x)}$ for some $\alpha\geq 0$ and $c>0$ (\eqref{ConvergenceRate} is then satisfied with $\eta=\frac{1}{1+\alpha}$), see \cite[Theorem 2.8 \& Section 2.3]{douc2004practical} and the references therein. Moreover, the assumption \eqref{ConvergenceRate} implies the sub-exponential decay of the $\beta$-mixing coefficient \eqref{BetaMixing}, namely there exists a constant $C$ depending on $\lambda$, $\Lambda$ and $\m(\mathcal{M})$ such that
\begin{equation}\label{eq:markchainbetamixc}
\beta_{ij}\leq C\exp(-b\vert i-j\vert^\eta)\quad\text{for any $i,j$,}
\end{equation}
which ensures that \eqref{DecatBetaMixing} and \eqref{DecayAlphaMixing} hold and ensures good concentration property of the Markov chain, see Proposition \ref{BersteinCorrelated}. The estimate \eqref{eq:markchainbetamixc} can be seen as a direct consequence of the combination of the estimate on the $\beta$-mixing coefficient in \cite[Proposition 3]{liebscher2005towards} and the assumptions \eqref{AbsoluteContinuity} and \eqref{ConvergenceRate}.
%
%

%
\medskip

Finally, the condition \eqref{ConvergenceRate} quantifies the weak convergence of the law of the Markov chain to its stationary distribution, namely there exists a constant $C>0$ such that for any $f \in \LL^\infty(\mathcal{M})$
\begin{equation}\label{eq:lemMark}
|\mathbb{E}[f(X_n)] - \mu_\infty(f)| \leq C\exp(-b n^\eta) \| f\|_{\LL^\infty}.
\end{equation}
We shortly give the argument. We first notice that a direct inductive argument together with the semigroup property $K^{n_1+n_2} = K^{n_1}K^{n_2}$ for every $n_1,n_2>0$ and Fubini's theorem gives
\begin{equation}\label{eq:swapkern}
\int_{\mathcal{M}} f\,\dd K^n\mu_0=  \int_{\mathcal{M}} K^n f\, \dd\mu_0\quad\text{for any $n\geq 1$.}
\end{equation}
The combination of \eqref{LawOfTheChain}, \eqref{eq:swapkern} and \eqref{ConvergenceRate} gives
\begin{align*}
|\mathbb{E}[f(X_n)] - \mu_\infty(f)| & \stackrel{\eqref{LawOfTheChain},\eqref{eq:swapkern}}{=} \Big| \int_{\mathcal{M}} (K^n f - \mu_\infty(f))\,\dd\mu_0 \Big| \\ & \stackrel{\eqref{ConvergenceRate}}{\le}  \Big(\int_\mathcal{M} 1+V\dd\mu_0\Big)a\exp(-b n^\eta) \|f\|_\infty.
\end{align*}
A classical example of a Markov chain satisfying Assumption \ref{ClassMarkov} is given by iterated function systems with additive noise. For simplicity, let $\mathcal{M}=\mathbb{T}^2$. Let $\{\theta_n\}_{n\geq 1}$ be i.i.d. random variables with common law $h\,\dd\m$ for some $h$ satisfying $\lambda\leq h\leq \Lambda$. Let $F:\mathbb{T}^2\rightarrow\mathbb{T}^2$ be a contraction, i.e. there exists a constant $L<1$ such that
\begin{equation}\label{eq:lipcont}
\vert F(x)-F(y)\vert \le L \vert x-y\vert\quad\text{for any $x,y\in\mathbb{T}^d$.}
\end{equation}
We define the iterated function system $\{X_n\}_{n\geq 1}$ according to the induction
$$X_{n+1}=F(X_n)+\theta_n\quad\text{for any $n\geq 1$}.$$
The kernel is given by 
$$K(x,A)=\int_{\mathbb{T}^2}\mathds{1}_A(F(x)+\theta)h(\theta)\dd\m(\theta),$$
so that $K$ satisfies \eqref{AbsoluteContinuity} with 
$$k(x,\cdot)=h(\cdot-F(x)).$$
Moreover, the condition \eqref{eq:lipcont} ensures the validity of \eqref{ConvergenceRate}, see for instance \cite[Theorem 3.2]{alsm03}. Thus the Markov process $\{X_n\}_{n\ge1}$ satisfies Assumption \ref{ClassMarkov}.

\medskip
We show in Appendix \ref{ClassMarkovAppendix} that the conclusions of Theorem \ref{th1} and Theorem \ref{MainResultMatchingClouds} hold true for Markov chains satisfying Assumption \ref{ClassMarkov}.
\subsection{Open problems}
%
%
%
%

We conclude this section with open questions that arise in view of our results. Those concern optimality of our convergence rates, extensions to more general costs and different type of correlated point clouds. For the latter, we mention two possible directions concerning the Ginibre ensemble and Coulomb gases that we think are worth investigating.
\begin{enumerate}
\item \emph{Sharpness of the rates. }The convergence rate in \eqref{ApproximationTransportPlan} and \eqref{ApproximationTransportMap} match with the one obtained for the case of uniformly distributed samples in \cite{ambrosio2019optimal}. However, even in the latter case, it has not been shown whether the rate is optimal and we suspect the opposite. A possible way to track the optimal rate could be to perform a second-order linearization of the Monge-Ampère equation \eqref{eq:MongAmp}. Following the same type of computations leading to \eqref{eq:ApproxPDE} in the case $\rho=1$, a second-order linearization $q^n$ should solve 
$$-\Delta q^n=\text{det}(\nabla^2 h^n),$$
where we recall that $h^n$ solves \eqref{eq:ApproxPDE}, providing the conjecture 
$$\lim_{n\uparrow\infty}\bigg\vert\int_{\mathbb{T}^2}\vert T^n-(\text{Id}+\nabla h^n)\vert^2-\int_{\mathbb{T}^2}\vert\nabla q^n\vert^2\bigg\vert=0.$$
%


%
\medskip

\item \emph{Extension to $p$-costs. }A natural question is to investigate if our results hold for different cost functions as $p$-cost functions for $p>1$. The behavior of the cost has been optimally quantified in \cite{AKT84,boutet2002almost,Barthe2013,Fournier2015}. However, to the best of our knowledge, quantitative estimates on the transport plan in the setting of general $p$-costs are not known, even for uniformly distributed samples.
A possible approach would be to revise the linearization ansatz of \cite{caracciolo2014scaling} for general $p$-costs. Indeed, if the transport cost between two points $x,y$ is given by $\frac{1}{p}|x-y|^p$ on the torus, Gangbo-McCann's theorem \cite[Theorem 1.2]{Gangbo1996} ensures that there exists a map $h^n$ such that the optimal transport map $T^n$ takes the form $T^n= \text{Id} +  |\nabla h^n|^{p'-2}\nabla h^n$, where $p'$ denotes the conjugate exponent. Therefore, following the same type of computations leading to \eqref{eq:ApproxPDE}, a first-order linearization should solve the following degenerate $p'$-Laplace equation
$$\nabla \cdot \rho|\nabla h^n|^{p'-2}\nabla h^n=\mu^n- \nu^m,$$
and we may expect 
$$\lim_{n\uparrow\infty}\int_{\mathbb{T}^2}\vert T^n-(\text{Id}+|\nabla h^n|^{p'-2}\nabla h^n)\vert^{p}=0.$$
See also \cite{Lukas} for a justification of this linearisation ansatz down to mesoscopic scales based on a large-scale regularity theory for the Monge-Amp\`ere equation.

\medskip

\item \emph{Ginibre ensemble. }A (complex) Ginibre ensemble is a non-Hermitian random matrix with independent complex Gaussian entries. Given a $n\times n$ Ginibre ensemble $X$, we define its empirical spectral distribution as 
\[
\mu^n = \frac 1n \sum_{i=1}^n \delta_{\lambda_i },
\]
where $\{\lambda_i\}_{i=1}^n$ are the eigenvalues of the matrix $\frac{X}{\sqrt n}$. The so called Circular Law states that, almost-surely, $\mu^n$ weakly converges to the uniform distribution on the complex unit disk $\mu^\infty$ having Lebesgue density $\frac1\pi \mathds{1}_{\bb_1}$, see for instance \cite[Theorem 1.10]{TaoVu}. An interesting question is to quantitatively understand the weak convergence of $\mu^n$ towards $\mu^\infty$ measured Wasserstein distances. A possible approach to this problem would be to employ the linearization argument discussed in Section \ref{sec:lin}. Note that the Ginibre ensemble is not covered by our setting as it is posed on the whole space $\mathbb{R}^2$ and, in general, the eigenvalues $\{\lambda_i\}_{i=1}^n$ possess long-range correlations and therefore do not satisfy our Assumption \ref{Assumptions}. However, in \cite[Th\'eor\`eme 3.1.1]{maxime}, the linearization argument has been shown to be robust enough to derive an upper bound on $\mathbb{E}[W_2(\mu^n, \mu^\infty)]$ (see also \cite[Theorem 1.3]{Jonas} for similar results for $\mathbb{E}[W_1(\mu^n, \mu^\infty)]$). 
The techniques used in \cite{maxime, Jonas} avoid a quantification of the correlations between the eigenvalues: This is done in \cite{maxime} by a decomposition argument together with concentration estimates of the Wasserstein distance around the Moser's coupling and in \cite{Jonas} by using classical tools from non-Hermitian random matrix theory. A challenging question would be to investigate the exact asymptotics of the transport cost $\mathbb{E}[W_2(\mu^n, \mu^\infty)]$ in the case of the Ginibre ensemble complementing, \cite[Th\'eor\`eme 3.1.1]{maxime} and \cite[Theorem 1.3]{Jonas} with a lower bound and consequently quantify the convergence of the linearized proxies to the optimal transport map. 

%

\medskip
\item \emph{Planar Coulomb gases. }Planar Coulomb gases are many  particles systems, in which the particles $\{X_i\}_{i=1}^n$ have repulsively Coulomb interactions and are confined by a potential $V: \mathbb{T}^2 \rightarrow \mathbb{R}$. These are modelled by the Hamiltonian
\begin{equation}\label{eq:hamiltonian}
\mathcal{H}_n(\{X_i\}_{i=1}^n) = - \sum_{i \neq j} \log |X_i - X_j| + n \sum_{i=1}^n V(X_i)
\end{equation}
and the Gibbs measure
\begin{equation}\label{eq:gibbsmeasure}
\mathrm{d}\mathbb{P}_{n, \beta} = \frac 1{Z_{n,\beta}} \exp\big(- \beta \mathcal{H}_n  (\{X_i\}_{i=1}^n)\big)\, \mathrm{d}X_1 \dots \mathrm{d}X_n,
\end{equation}
where $Z_{n,\beta}$ is the normalizing constant and $\beta$ denotes the inverse temperature.  In analogy with \eqref{eq:empmeas}, we can define the empirical measure of a Coulomb gas by $\mu^n = \frac1n \sum_{i=1}^n \delta_{X_i}$. In this setting the convergence of the empirical measure exhibits a twofold behavior.  On the one hand, for small temperature $\beta \gg \frac1n$, the empirical measure $\mu^n$ weakly converges to the \emph{equilibrium measure} $\mu^\infty_{\text{low}}$ defined as the minimizer
$$\mu^\infty_{\text{low}} := \operatornamewithlimits{argmin}_{\mu \in \mathcal{P}(\mathbb{T}^2)} \bigg\{ - \int_{\mathbb{T}^2\times \mathbb{T}^2} \log (x-y) \,\mathrm{d}\mu \otimes \mu (x,y) + \int_{\mathbb{T}^2} V \,\mathrm{d}\mu\bigg\}. $$
On the other hand, for large temperature $\beta\ll \frac{1}{n}$, a correction term is required and the empirical measure $\mu^n$ weakly converges to the \emph{thermal equilibrium measure} $\mu^\infty_{\text{high}}$, that is the minimizer
\begin{equation*}
\mu^\infty_{\text{high}} :=\operatornamewithlimits{argmin}_{\mu \in \mathcal{P}(\mathbb{T}^2)} \bigg\{- \int_{\mathbb{T}^2\times \mathbb{T}^2} \log (x-y) \,\mathrm{d}\mu \otimes \mu (x,y) + \int_{\mathbb{T}^2} V \,\mathrm{d}\mu + \frac 1{n \beta} \int_{\mathbb{T}^2} \mu \log \mu\bigg\}.
\end{equation*}
We refer the reader to \cite{serfaty} for a complete exposition on $2$D-Coulomb gases. This setting can be seen as an extension of the previous Ginibre ensemble on the torus. Indeed the law of the spectrum of the Ginibre ensemble is given by the Gibbs measure \eqref{eq:gibbsmeasure} choosing $\beta=2$ and $V(x)=|x|^2$ in \eqref{eq:hamiltonian}. Motivated by this observation and the works \cite{maxime, Jonas} on the Ginibre ensemble, we expect that the linearization approach could be employed also in this setting. First, we could justify the linearization Ansatz in the spirit of Theorem \ref{MainResultMatchingClouds} in both temperature regimes, using available results in the literature. Indeed, concentration inequalities around the equilibrium measure have been derived in \cite{Zelada} (see also \cite{Chafai}), that would replace the Bernstein's type inequality of Proposition \ref{BersteinCorrelated} and the matching cost estimates in Proposition \ref{prop:matchcost}. Second, a more ambitious question would be to use the linearization argument to derive optimal rates of the convergence to the equilibrium measure in both temperature regimes.

\end{enumerate}

\section{Structure of the proof}\label{StrategyOfProofSection}
This section is devoted to describe the main ideas and how are organized the proofs of Theorem \ref{MainResultMatchingClouds} and \ref{th1}. We mainly focus on the proof of Theorem \ref{MainResultMatchingClouds} since the proof of Theorem \ref{th1} follows by the same strategy.

\medskip

\subsubsection*{General strategy.}The proof of Theorem \ref{MainResultMatchingClouds} follows the strategy employed in \cite{ambrosio2019optimal} to deal with independent and uniformly distributed random points. 
The main idea is to use the quantitative stability result for transport maps in \cite[Theorem 3.2]{ambrosio2019optimal}, stating that two transport maps are close in the $\LL^2$-topology if the target measures are close in the $W_2$-topology. We restate below the result for reader's convenience.
\begin{theorem}[Stability of transport maps]\label{StabilityResultMap}
Let $\nu,\mu_1,\mu_2\in \mathcal{P}(\mathcal{M})$ such that $\nu\ll \m$ and let $T,S: \mathcal{M}\rightarrow\mathcal{M}$ be the optimal transport maps respectively for the pairs of measures $(\nu,\mu_1)$ and $(\nu,\mu_2)$. We assume that $S=\exp(\nabla f)$ for some $f:\mathcal{M}\rightarrow\mathbb{R}$ with $\mathrm{C}^{1,1}$-regularity. 

\medskip

\noindent There exists a constant $c>0$ depending on $\mathcal{M}$ such that, provided 
\begin{equation}\label{RegularityCondition}
\|\nabla f\|_{\LL^\infty}+\|\nabla^2 f\|_{\LL^\infty}\leq c,
\end{equation}
we have 
$$\int_{\mathcal{M}}\dd^2(S,T)\,\dd\nu\lesssim W^2_2(\mu_1,\mu_2)+W_2(\mu_1,\mu_2)W_2(\nu,\mu_1).$$
\end{theorem}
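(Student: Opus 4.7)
The plan is to reduce the bound on $\int\dd^2(S,T)\,\dd\nu$ to an $L^2$-bound on the difference of the gradients of the underlying Kantorovich potentials, and then to close with a Kantorovich duality argument. By Brenier-McCann, the optimal map $T$ can be represented as $T=\exp(\nabla\psi)$ for a $c$-concave Kantorovich potential $\psi$ of $(\nu,\mu_2)$, and under the smallness condition \eqref{RegularityCondition} the Riemannian distortion estimates for the exponential map give
\begin{equation*}
\dd^2(S(x),T(x))=\bigl|\nabla f(x)-\nabla\psi(x)\bigr|^2_{T_x\mathcal{M}}+O\bigl(c(|\nabla f|^2+|\nabla\psi|^2)\bigr),
\end{equation*}
so it suffices to estimate $\int|\nabla f-\nabla\psi|^2\,\dd\nu$ up to curvature corrections that will be absorbable in the $\lesssim$.

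\textbf{Expansion and cross-term.} Expanding the square,
\begin{equation*}
\int|\nabla f-\nabla\psi|^2\,\dd\nu=\int|\nabla f|^2\,\dd\nu+\int|\nabla\psi|^2\,\dd\nu-2\int\nabla f\cdot\nabla\psi\,\dd\nu,
\end{equation*}
the first two integrals equal $W_2^2(\nu,\mu_1)$ and $W_2^2(\nu,\mu_2)$ respectively (up to curvature corrections) by the optimality of $S$ and $T$. For the cross-term, I would integrate by parts against the density $\rho=\dd\nu/\dd\m$, so that $\int\nabla f\cdot\nabla\psi\,\dd\nu=-\int\psi\,\nabla\cdot(\rho\nabla f)\,\dd\m$, and linearize the Monge-Amp\`ere relation $\det(\exp(\nabla f))\,\rho=\rho_1\circ S$ associated to $S_\#\nu=\mu_1$ into $-\nabla\cdot(\rho\nabla f)=\mu_1-\nu$ modulo a quadratic-in-$f$ error controlled by the $C^{1,1}$-norm of $f$. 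This reduces the problem to bounding $\int\psi\,\dd(\mu_1-\nu)$.

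\textbf{Closing with duality.} To turn $\int\psi\,\dd(\mu_1-\nu)$ into the claimed combination of $W_2$-distances, the plan is to combine: (i) the Brenier identity $\int\psi\,\dd\nu+\int\psi^c\,\dd\mu_2=\tfrac12 W_2^2(\nu,\mu_2)$; (ii) the Kantorovich inequality $\psi(x)+\psi^c(y)\le\tfrac12\dd^2(x,y)$ evaluated at $y=S(x)$ and integrated against $\nu$, which yields $\int\psi\,\dd\nu+\int\psi^c\,\dd\mu_1\le\tfrac12 W_2^2(\nu,\mu_1)$; and (iii) the pair duality $\int\psi\,\dd\mu_1+\int\psi^c\,\dd\mu_2\le\tfrac12 W_2^2(\mu_1,\mu_2)$. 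Subtracting (ii) from (i) produces $\int\psi^c\,\dd(\mu_2-\mu_1)\le\tfrac12(W_2^2(\nu,\mu_2)-W_2^2(\nu,\mu_1))$; plugging the triangle inequality $W_2(\nu,\mu_2)\le W_2(\nu,\mu_1)+W_2(\mu_1,\mu_2)$ into the right-hand side generates exactly the $W_2^2(\mu_1,\mu_2)$ contribution and the cross term $W_2(\mu_1,\mu_2)W_2(\nu,\mu_1)$, while (iii) provides the remaining algebraic cancellation.

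\textbf{Main obstacle.} The principal difficulty is twofold. First, one must justify the Monge-Amp\`ere linearization rigorously under only the $C^{1,1}$-regularity of $f$, since the determinant expansion produces errors quadratic in $\nabla f$ and $\nabla^2 f$ that a priori could spoil the leading bound. Second, one must consistently match these with the curvature corrections coming from the exponential-map distortion in the very first step. Both types of remainder are controlled by choosing the constant $c$ in \eqref{RegularityCondition} small enough, which is precisely the structural reason for imposing such a smallness condition on $f$.
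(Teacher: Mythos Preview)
The paper does not prove this theorem; it is quoted verbatim from \cite[Theorem~3.2]{ambrosio2019optimal}. Nevertheless, your proposal has genuine gaps and does not reproduce the argument of the cited reference.

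\medskip

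\textbf{First gap: the linearization step is not available at this level of generality.} You write $\int\nabla f\cdot\nabla\psi\,\dd\nu=-\int\psi\,\nabla\cdot(\rho\nabla f)\,\dd\m$ and then replace $-\nabla\cdot(\rho\nabla f)$ by $\mu_1-\nu$ plus a quadratic remainder. Both steps require the density $\rho=\dd\nu/\dd\m$ to have at least one weak derivative, but the hypothesis is only $\nu\ll\m$. The Monge--Amp\`ere expansion also produces terms of the type $\nabla\rho\cdot\nabla f$, which are simply undefined here. This is not a technicality that the smallness of $c$ can absorb.

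\medskip

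\textbf{Second gap: the duality bookkeeping does not close.} Subtracting your (ii) from your (i) gives
\[
\int\psi^c\,\dd(\mu_2-\mu_1)\;\ge\;\tfrac12\bigl(W_2^2(\nu,\mu_2)-W_2^2(\nu,\mu_1)\bigr),
\]
the opposite inequality to the one you wrote. More importantly, after your expansion one needs an \emph{upper} bound on $-2\int\psi\,\dd(\mu_1-\nu)$, and combining (i), (ii), (iii) only yields \emph{lower} bounds on this quantity (all three Kantorovich inequalities are $\le$, hence after the sign flip they all point the wrong way). The argument as sketched cannot produce the desired upper bound.

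\medskip

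\textbf{What the cited proof actually does.} The key role of the $C^{1,1}$-smallness of $f$ is not to justify a Monge--Amp\`ere linearization but to make $f$ \emph{uniformly} $c$-concave: one shows that there exists $\alpha>0$ (depending only on $\mathcal{M}$, once $c$ is small) such that
\[
\tfrac12\dd^2(x,y)-f(x)-f^c(y)\;\ge\;\alpha\,\dd^2\bigl(S(x),y\bigr)\qquad\text{for all }x,y\in\mathcal{M}.
\]
Setting $y=T(x)$ and integrating in $\dd\nu$ gives
\[
\alpha\int_{\mathcal{M}}\dd^2(S,T)\,\dd\nu\;\le\;\tfrac12 W_2^2(\nu,\mu_2)-\tfrac12 W_2^2(\nu,\mu_1)+\int f^c\,\dd(\mu_1-\mu_2),
\]
using the duality identity $\int f\,\dd\nu+\int f^c\,\dd\mu_1=\tfrac12 W_2^2(\nu,\mu_1)$. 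The first difference is bounded by $W_2(\mu_1,\mu_2)\bigl(W_2(\nu,\mu_1)+\tfrac12 W_2(\mu_1,\mu_2)\bigr)$ via the triangle inequality. For the second term one uses $f^c(y_1)-f^c(y_2)\le\tfrac12\dd^2(z,y_1)-\tfrac12\dd^2(z,y_2)$ with $z=S^{-1}(y_1)$ and integrates against an optimal $(\mu_1,\mu_2)$-coupling, followed by Cauchy--Schwarz; this produces exactly $W_2^2(\mu_1,\mu_2)+W_2(\mu_1,\mu_2)W_2(\nu,\mu_1)$. No regularity of $\rho$ and no Monge--Amp\`ere equation are invoked.
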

The first step consists of using Theorem \ref{StabilityResultMap} to deduce a stability estimate (in terms of the quadratic Wasserstein distance) of transport plans in the special case where $\mu_1=\nu^m$, $\mu_2=\exp(\nabla h^{n,t})_\#\mu^n$ and $\nu=\mu^n$. In this step, we immediately face the issue of the lack of regularity of $h^{n,t}$ necessary to ensure the condition \eqref{RegularityCondition}: Indeed, recalling that $h^{n,t}$ solves \eqref{eq:f^nmt}, it does not have the $\mathrm{C}^{1,1}$-regularity condition for non-smooth densities $\rho$ that we consider here. We overcome this issue introducing an additional regularization step: We smooth  the operator $-\nabla \cdot \rho \nabla$ and implicitly $\gamma^{n,t}$ in form of 
\begin{equation}\label{RegularizationIntro}
\gamma^{n,t}_{\delta}:=\big(\text{Id},\exp(\nabla h^{n,t}_\delta)\big)_\#\mu^{n,t}\quad \text{with }\nabla\cdot\rho_\delta\nabla h^{n,t}_\delta=\mu^{n,t}-\nu^{m,t},
\end{equation}
for a regularization parameter $\delta$ to be optimized and $\rho_\delta:=\text{P}_\delta \rho$. Classical Schauder's theory ensures that $h^{n,t}_\delta$ owns $\cc^\infty$-regularity. 
Doing so, we can use Theorem \ref{StabilityResultMap} to deduce, provided that 
\begin{equation}\label{SmallnessH}
\|\nabla h^{n,t}_\delta\|_{\LL^{\infty}}+\|\nabla^2 h^{n,t}_\delta\|_{\LL^\infty}\ll 1,
\end{equation}
a stability error estimate which reads
\begin{equation}\label{QuantitativeEstiPlanIntro}
\begin{aligned}
\inf_\pi W^2_2(\pi,\gamma^{n,t}_\delta)\lesssim&\, W^2_2\big(\nu^{m,t},\exp(\nabla h^{n,t}_\delta)_\#\mu^{n,t}\big)+W_2\big(\nu^{m,t},\exp(\nabla h^{n,t}_\delta)_\#\mu^{n,t}\big)W_2(\mu^{n},\nu^{m})\\
&+W^2_2(\nu^{m,t},\nu^{m})+W^2_2(\mu^{n,t},\mu^{n})+\big(W_2(\nu^{m,t},\nu^m)+W_2(\mu^{n,t},\mu^n)\big)W_2(\mu^n,\nu^m),
\end{aligned}
\end{equation}
where we refer to \eqref{QuantitativeEstiPlan} for further details. Our argument then differs from \cite{ambrosio2019optimal} by splitting the error in two parts
\begin{equation}\label{SplittingIntro}
\inf_\pi W^2_2(\pi,\gamma^{n,t})\leq \underbrace{W^2_2(\gamma^{n,t},\gamma^{n,t}_\delta)}_{\text{regularization error}}+\underbrace{\inf_{\pi} W^2_2(\pi,\gamma^{n,t}_\delta)}_{\text{stability error}}.
\end{equation}
Our proof then proceeds in two steps, controlling separately the two terms in \eqref{SplittingIntro}.
\subsubsection*{Control of the regularization error}To deal with the regularization error, we look at the difference $e^{n,t}_\delta:=h^{n,t}_\delta-h^{n,t}$ which solves, according to \eqref{eq:f^nmt} and \eqref{RegularizationIntro},
\begin{equation}\label{ErrorEquation}
-\nabla\cdot\rho_\delta\nabla e^{n,t}_\delta=\nabla\cdot(\rho_\delta-\rho)\nabla h^{n,t}.
\end{equation}
Using an energy estimate, we get
\begin{equation}\label{ErrorIntroRegul}
\int_{\mathcal{M}}\vert\nabla e^{n,t}_\delta\vert^2\lesssim \int_{\mathcal{M}}\vert\rho-\rho_\delta\vert^2\vert\nabla h^{n,t}\vert^2.
\end{equation}
On the one hand, since $\rho\in \LL^\infty$, we have $\rho_\delta\rightarrow\rho$ as $\delta\downarrow 0$ in every $\LL^q$ with $q<\infty$. On the other hand, we learn from Meyers' estimate (recalled in Proposition \ref{Meyers}) that there exists $\bar q>2$ such that $\nabla h^{n,t}\in \LL^{\bar q}$. Consequently, we can treat \eqref{ErrorIntroRegul} using H\"older's inequality which provides
\begin{equation}\label{LqControlIntro}
\int_{\mathcal{M}}\vert\nabla e^{n,t}_\delta\vert^2\lesssim\|\rho_\delta-\rho\|^2_{\LL^{2(\frac{\bar{q}}{2})'}}\|\nabla h^{n,t}\|^2_{\LL^{\bar{q}}}.
\end{equation}
The next step is to control the averaged Meyers' norm $\mathbb{E}\big[\|\nabla h^{n,t}\|^2_{\LL^{\bar{q}}}\big]$, that we show in Proposition \ref{prop1} to be of order of 
\begin{equation}\label{LqEstimatesIntro}
\mathbb{E}\big[\|\nabla h^{n,t}\|^2_{\LL^{\bar{q}}}\big]\lesssim \frac{\vert\log(t)\vert+\log^{\frac{1}{\eta}}(n)}{n},
\end{equation} 
where we recall that $\eta$ denotes the correlation length, see Assumption \ref{Assumptions}. 

\medskip

The combination of \eqref{LqControlIntro}, \eqref{LqEstimatesIntro} and local Lipschitzianity of the exponential map yields
\begin{equation}\label{regularizedErrorIntro}
\mathbb{E}\big[W^2_2(\gamma^{n,t}_\delta,\gamma^{n,t})\big]\lesssim \|\rho_\delta-\rho\|^2_{\LL^{2(\frac{\bar{q}}{2})'}}\frac{\vert \log(t)\vert+\log^{\frac{1}{\eta}}(n)}{n},
\end{equation}
 where we refer to \eqref{eq:thm1explipW2} for further details. We emphasize here that the stretched exponential decay assumption \eqref{DecayAlphaMixing} of the $\alpha$-mixing coefficient plays a crucial role in the estimate \eqref{LqEstimatesIntro}. Indeed, the additional contribution on the numerator of the r.h.s. of \eqref{LqEstimatesIntro} is due to the correlations and is only of logarithmic type $\log^{\frac{1}{\eta}}(n)$ thanks to the stretched exponential decay \eqref{DecayAlphaMixing}. Finally, the latter appears in the estimate \eqref{regularizedErrorIntro} and can be compensated with the choice of $\delta$ in \eqref{ChoicesIntro} and the regularity assumption on $\rho$ in form of \eqref{ApproximationRhoIntro}.
\subsubsection*{Control of the stability error}For the stability error, we first need to ensure \eqref{SmallnessH}. Our strategy follows the idea in \cite{ambrosio2019finer} which consists of showing that \eqref{SmallnessH} is satisfied with very high probability. In our case, a new difficulty comes from our regularization of $\rho$ and the regularization parameter $\delta$ has to be carefully optimized. We show that if $\delta$ is taken as an inverse power of $\log(n)$, \eqref{SmallnessH} becomes very likely as $n\uparrow\infty$. More precisely, we show in Proposition \ref{Fluctuation} that given two exponents $\kappa_1$ and $\upsilon\gg \kappa_1$, there exists $\kappa_2$ such that given the choices
\begin{equation}\label{ChoicesIntro}
\delta=\frac{1}{\log^{\kappa_1}(n)}\quad \text{and}\quad t=\frac{\log^{\kappa_2}(n)}{n},
\end{equation}
we have
\begin{equation}\label{FluctuationEstimates}
\mathbb{P}\Big(\|\nabla h^{n,t}_\delta\|_{\LL^{\infty}}+\|\nabla^2 h^{n,t}_\delta\|_{\LL^\infty}>\tfrac{1}{\log^{\upsilon}(n)}\Big)=o\Big(\frac{1}{n^\ell}\Big)\quad\text{for any $\ell\in \mathbb{N}$}.
\end{equation}
This result is in the spirit of \cite[Theorem 3.3]{ambrosio2019finer} but, in our setting, the proof relies on Schauder's theory rather than an explicit formula for $h^{n,t}_\delta$ as well as concentration inequalities in form of Proposition \ref{BersteinCorrelated} to treat the correlations. For further details on the strategy, we refer to Section \ref{FluctuationSection}. We emphasize here that, to obtain the super-polynomial behaviour \eqref{FluctuationEstimates}, we crucially use the fact that the concentration inequalities in Proposition \ref{BersteinCorrelated} are of sub-exponential type which is itself ensured by the sub-exponential decay assumption on the $\alpha$-mixing coefficient \eqref{DecayAlphaMixing}. The reason lies in the choices of $\delta$ and $t$ in \eqref{ChoicesIntro}. Indeed, the only room we have when optimizing $t$ is in the logarithmic growth $\log^{\kappa_2}(n)$ (as already mention in Section \ref{sec:lin}, the natural regularization time is $t\sim \frac{1}{n}$ up to logarithmic corrections). Furthermore, the quantity $\|(\nabla h^{n,t}_\delta,\nabla^2 h^{n,t}_\delta)\|_{\LL^{\infty}}$ can be heuristically estimated by an inverse power of $\delta$ and $t^{-1}$ as it involves powers of $\|(\nabla\rho_\delta,\nabla^2\rho_\delta)\|_{\LL^\infty}$ and the norm $\|\mu^{n,t}-1\|_{\LL^{\infty}}$ by Schauder's theory. Hence, in the best case scenario where exponential concentration holds, we expect
\begin{align*}
\mathbb{P}\Big(\|\nabla h^{n,t}_\delta\|_{\LL^{\infty}}+\|\nabla^2 h^{n,t}_\delta\|_{\LL^\infty}>\lambda\Big)&\lesssim \exp\big(-\lambda n\|(\nabla h^{n,t}_\delta,\nabla^2 h^{n,t}_\delta)\|^{-1}_{\LL^{\infty}}\big)\\
&\leq \exp\big(-\lambda nt\delta^{\tilde{\upsilon}}\big)=\exp\big(-\lambda\log^{\kappa_2}(n)\delta^{\tilde{\upsilon}}\big),
\end{align*}
for some $\tilde{\upsilon}>0$, which gives a super-polynomial behavior for the choices $\delta=\frac{1}{\log^{\kappa_1}(n)}$ and $\lambda=\frac{1}{\log^{\upsilon}(n)}$ for large $\kappa_2$. Weaker properties, as for instance polynomial concentration, would only lead to a decay given by an inverse power of $\log(n)$ which is not enough for our purpose.
\medskip

With \eqref{FluctuationEstimates} in hands, we can restrict the analysis to realizations satisfying $\|\nabla h^{n,t}_\delta\|_{\LL^{\infty}}+\|\nabla^2 h^{n,t}_\delta\|_{\LL^\infty}\leq\tfrac{1}{\log^{\upsilon}(n)}$ where, for $n\gg 1$, \eqref{SmallnessH} is satisfied which puts us in the validity of \eqref{QuantitativeEstiPlanIntro}. We then treat each terms appearing in \eqref{QuantitativeEstiPlanIntro} separately:
\begin{itemize}
\item \textbf{The optimal control of the cost} $W^2_2(\mu^n,\nu^m)$ has been already well studied and is optimally estimated by 
\begin{equation}\label{CostIntro}
\mathbb{E}\big[W^2_2(\mu^n,\nu^m)\big]\lesssim \frac{\log(n)}{n}.
\end{equation}
We refer to Appendix \ref{sec:matchcost} for a detailed statement, references and extensions to the cases of Assumption \ref{Assumptions} and Assumption \ref{ClassMarkov}.

\medskip

\item \textbf{The smoothing errors} $W^2_2(\mu^n,\mu^{n,t})$ and $W^2_2(\nu^m,\nu^{m,t})$. Classical contractivity estimates are known and are usually applied to deal with these errors, see for instance  \cite[Theorem 3]{erbar2015equivalence}, which bound the errors by $t$. However, due to the choice of $t$ in \eqref{ChoicesIntro}, this result is of no use since $t$ is much larger than the magnitude of the cost, namely $t\gg \frac{\log(n)}{n}$. Instead, we follow the approach in \cite{ambrosio2019finer}, where the authors showed that in the particular case of empirical measures in dimension $2$, we can improve the rate and obtain the bound $\frac{\log\log(n)}{n}\ll \frac{\log(n)}{n}$. We extend this result to our setting of non-constant densities and correlated points. In Proposition \ref{Contractivity}, we derive
\begin{equation}\label{ContractivityEstimatesIntro}
\mathbb{E}\big[W^2_2(\mu^{n},\mu^{n,t})\big]+\mathbb{E}\big[W^2_2(\nu^{m},\nu^{m,t})\big]\lesssim \frac{\log\log(n)}{n}+t\|\rho_{t+\frac{1}{n}}-\rho_{\frac{1}{n}}\|_{\LL^1}.
\end{equation}
As opposed to \cite{ambrosio2019finer}, our approach uses Fourier analysis together with additional cares to handle the correlations and non-constant densities.

\medskip

\item \textbf{The error in the Moser coupling} $W^2_2\big(\nu^{m,t},\exp(\nabla h^{n,t}_\delta)_\#\mu^{n,t}\big)$. We follow the strategy in \cite{ambrosio2019finer}. This error can be related to a Moser coupling between $\mu^{n,t}$ and $\nu^{m,t}$ (see for instance \cite[Appendix p. 16]{OldNewVillani}): The equation \eqref{eq:f^nmt} gives a natural coupling between $\mu^{n,t}$ and $\nu^{m,t}$ which can be formulated using Benamou-Brenier's theorem \cite{benamou2000computational},
$$\nu^{m,t}=\phi(1,\cdot)_\#\mu^{n,t}\quad\text{with $\phi$ being the flow induced by $s\mapsto \frac{\rho_\delta\nabla h^{n,t}_\delta}{(1-s)\mu^{n,t}+s\nu^{m,t}}$}.$$
Then, using the transport plan $\big(\phi(1,\cdot),\exp(\nabla h^{n,t}_\delta)\big)_\#\mu^{n,t}$ as a competitor, we get
\begin{align*}
W^2_2\big(\nu^{m,t},\exp(\nabla h^{n,t}_\delta)_\#\mu^{n,t}\big)&=W^2_2\big(\phi(1,\cdot)_\#\mu^{n,t},\exp(\nabla h^{n,t}_\delta)_\#\mu^{n,t}\big)\\
&\leq \int_{\mathcal{M}}\dd^2\big(\phi(1,\cdot),\exp(\nabla h^{n,t}_\delta)\big),
\end{align*}
that we combine with a quantitative stability result for flows of vector fields, proved in \cite[Proposition A.1]{ambrosio2019finer}, leading to
$$W^2_2\big(\nu^{m,t},\exp(\nabla h^{n,t}_\delta)_\#\mu^{n,t}\big)\lesssim\big(\|\rho_{\delta}-\rho\|^{2}_{\LL^{2(\frac{\bar q}{2})'}}+\|\rho_{t}-\rho\|^{2}_{\LL^{2(\frac{\bar q}{2})'}}+\tfrac{1}{\log^{\upsilon}(n)}\big)\|\nabla h^{n,t}_\delta\|^2_{\LL^{\bar{q}}},$$
%
%
where we recall that $\bar{q}$ denotes the Meyers' exponent introduced in \eqref{ErrorIntroRegul}. For further details, we refer to \eqref{StabilityFlowProof}. It then remains to appeal to Meyers' estimate, see Proposition \ref{Meyers}, to \eqref{ErrorEquation} together with \eqref{LqEstimatesIntro} in form of 
$$\mathbb{E}\big[\|\nabla h^{n,t}_\delta\|^2_{\LL^{\bar{q}}}\big]\lesssim  \mathbb{E}\big[\|\nabla h^{n,t}\|^2_{\LL^{\bar{q}}}\big]\stackrel{\eqref{LqEstimatesIntro}}{\lesssim}\frac{\vert\log(t)\vert+\log^{\frac{1}{\eta}}(n)}{n},$$
which finally yields
\begin{equation}\label{EstimateMoserCoupling}
\mathbb{E}\big[W^2_2\big(\nu^{m,t},\exp(\nabla h^{n,t}_\delta)_\#\mu^{n,t}\big)\big]\lesssim \big(\|\rho_{\delta}-\rho\|^{2}_{\LL^{2(\frac{\bar q}{2})'}}+\|\rho_{t}-\rho\|^{2}_{\LL^{2(\frac{\bar q}{2})'}}+\tfrac{1}{\log^{\upsilon}(n)}\big)\frac{\vert\log(t)\vert+\log^{\frac{1}{\eta}}(n)}{n}.
\end{equation}
\end{itemize}
\medskip

To conclude, we see that all bounds involve errors in terms of the approximation of $\rho$ using the heat-semigroup, that we need to quantify. This is where the assumption $\rho\in \mathrm{H}^\varepsilon$ plays a role, in form of the quantitative estimate 
\begin{equation}\label{ApproximationRhoIntro}
\|\rho_s-\rho\|_{\LL^2}\lesssim \|\rho\|_{\mathrm{H}^\varepsilon} \,s^{\varepsilon}\quad\text{for any $s>0$},
\end{equation}
see \eqref{ApproximationRho} for a proof. Combining \eqref{QuantitativeEstiPlanIntro}, \eqref{SplittingIntro}, \eqref{regularizedErrorIntro}, \eqref{CostIntro}, \eqref{ContractivityEstimatesIntro}, \eqref{EstimateMoserCoupling}, \eqref{ApproximationRhoIntro} with the choices of $\delta$ and $t$ in \eqref{ChoicesIntro}, we obtain Theorem \ref{MainResultMatchingClouds}. The proof of Theorem \ref{th1} is obtained using the same strategy where the first step is simpler, since we apply directly Theorem \ref{StabilityResultMap} with $\mu_1=\rho$, $\nu=\mu^n$ and $\mu_2=\exp(\nabla f^{n,t}_\delta)_\#\mu^n$ where $f^{n,t}$ solves $-\nabla\cdot \rho_\delta\nabla f^{n,t}=\mu^{n,t}-\rho_t$.

\section{Proofs}
\subsection{Notations and preliminary results}\label{sec:preliminary}
We provide in this section some notations and preliminary results needed in the proofs of Theorem \ref{MainResultMatchingClouds} and Theorem \ref{th1}. We recall that throughout the paper, we denote by $\mathcal{M}$ a $2$-dimensional compact connected Riemannian manifold (or the square $[0,1]^2$) endowed with the Riemannian distance $\dd$.

\medskip
\noindent
\textbf{Wasserstein distance. }Given $\mu, \nu\in\mathcal{P}(\mathcal{M})$, we define the quadratic Wasserstein distance as
\begin{equation}\label{Wasserstein}
W_2^2(\mu, \nu):= \min_{\pi \in \Pi(\mu, \nu)} \int_{\mathcal{M}\times \mathcal{M}} \dd^2(x,y) \,\dd \pi (x,y),
\end{equation}
where $\Pi(\mu, \nu)$ is the set of couplings between $\mu$ and $\nu$, that is the set of $\pi\in\mathcal{P}(\mathcal{M}\times \mathcal{M})$ having $\mu$ and $\nu$ as first and second marginal, respectively. We refer the reader to the monographs \cite{Viltop, OldNewVillani} for a detailed overview on the subject of optimal transport. 
%
%
%
%
%
%
%
%
%
We recall the following simple, but useful Lipschitz contraction property of the Wasserstein distance. 
\begin{lemma}[Lipschitz property of the Wasserstein metric] \label{lem:transportcontraction}
Let $(D, \dd_D)$ be a complete and separable metric space, let $\mu, \nu\in\mathcal{P}(\mathcal{M})$ and let $T: \mathcal{M} \rightarrow D$ be a $L$-Lipschitz map. It holds
\begin{equation}\label{Eq:LipEstimateWasser}
W_2^2 (T_{\#}\mu,T_{\#} \nu) \le L^2 W_2^2 (\mu, \nu).
\end{equation}
\end{lemma}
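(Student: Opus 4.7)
The plan is to exploit the variational definition of $W_2$ by constructing a competitor coupling for the pair $(T_\#\mu, T_\#\nu)$ from an optimal coupling for $(\mu,\nu)$, and then estimate its cost via the Lipschitz property of $T$.

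Concretely, I would start by taking $\pi^\star \in \Pi(\mu,\nu)$ an optimal coupling realizing $W_2^2(\mu,\nu)$; existence follows from the fact that the cost $\dd^2$ is lower semicontinuous and $\mathcal{M}$ is compact (so tightness is automatic), which is standard in optimal transport, see e.g. \cite{Viltop, OldNewVillani}. Next, I would define the candidate coupling
\begin{equation*}
\tilde\pi := (T,T)_\# \pi^\star \in \mathcal{P}(D\times D).
\end{equation*}
A direct check on test functions depending only on the first variable shows that the first marginal of $\tilde\pi$ is $T_\#\mu$, and analogously the second marginal is $T_\#\nu$, so $\tilde\pi \in \Pi(T_\#\mu, T_\#\nu)$.

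Using $\tilde\pi$ as a competitor in the definition of $W_2^2(T_\#\mu,T_\#\nu)$ and the change-of-variables formula for push-forwards, followed by the $L$-Lipschitz bound $\dd_D(T(x),T(y)) \leq L\,\dd(x,y)$, I obtain
\begin{equation*}
W_2^2(T_\#\mu,T_\#\nu) \leq \int_{D\times D}\dd_D^2(x',y')\,\dd\tilde\pi(x',y') = \int_{\mathcal{M}\times\mathcal{M}}\dd_D^2(T(x),T(y))\,\dd\pi^\star(x,y) \leq L^2 \int_{\mathcal{M}\times\mathcal{M}}\dd^2(x,y)\,\dd\pi^\star(x,y),
\end{equation*}
and the last integral equals $L^2 W_2^2(\mu,\nu)$ by optimality of $\pi^\star$, concluding \eqref{Eq:LipEstimateWasser}.

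There is no real obstacle here: the argument is entirely soft and relies only on the Kantorovich formulation \eqref{Wasserstein} and the push-forward identity. The only minor points to check are existence of an optimal coupling (immediate on a compact metric space with continuous cost) and that $(T,T)_\#\pi^\star$ has the correct marginals, both of which are routine.
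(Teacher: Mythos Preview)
Your proof is correct and follows essentially the same approach as the paper: push forward a coupling of $(\mu,\nu)$ through $(T,T)$ to obtain a competitor for $(T_\#\mu,T_\#\nu)$, then apply the Lipschitz bound. The only cosmetic difference is that you fix an optimal coupling $\pi^\star$ from the start, whereas the paper works with an arbitrary coupling and takes the infimum at the end; the content is identical.
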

\begin{proof}
For any coupling $\pi \in \Pi(\mu, \nu)$, the push-forward $(T,T)_\# \pi$ is a coupling between $T_\# \mu$ and $T_\# \nu$. Moreover, it holds
\[
\begin{split}
\int_{D\times D} \dd^2(x,y) \,\dd ((T,T)_\#\pi)(x,y) & = \int_{\mathcal{M}\times\mathcal{M}} \dd^2(T(x),T(y))\, \dd \pi(x,y)\\
&\le L^2 \int_{\mathcal{M}\times\mathcal{M}} \dd^2(x,y) \, \dd \pi(x,y).
\end{split}
\]
Taking the infimum among all possible couplings $\pi \in \Pi(\mu,\nu)$ leads to \eqref{Eq:LipEstimateWasser}.
\end{proof}
%
%
%
%
%
%
%
%
%

\medskip
\noindent
\textbf{Heat semigroup and heat kernel. }We recall some basic facts on the heat semigroup and its generator, we refer the reader to \cite[Chapter VI]{chavel1984eigenvalues} for a more detailed overview. For $t>0$, we denote by $p_t$ the fundamental solution of the heat operator $\partial_t-\Delta$ on $\mathcal{M}$, where $\Delta$ denotes the Beltrami-Laplace operator. Classical Schauder's theory ensures that $p_t$ is smooth and it is known, see for instance \cite{stroock1998upper} and \cite[Appendix B]{ambrosio2019finer}, that $p_t$ and its derivatives satisfy for some $C>0$ depending on $\mathcal{M}$
\begin{equation}\label{Eq28}
\vert\nabla^N p_{t}(x,y)\vert\lesssim t^{-1-\frac{N}{2}}\exp\Big(-\tfrac{1}{C}\tfrac{\dd^2(x,y)}{t}\Big)\quad\text{for any $t>0$, $N\geq 1$ and $x,y\in\mathcal{M}$.}
\end{equation}
%
The kernel $p_t$ admits the spectral decomposition
\begin{equation}\label{SpectralDecompoHeatKernel}
p_{t}(x,y):=\sum_{k\geq 1}e^{-t\lambda_k}\phi_k(x)\phi_k(y)\quad \text{for any $t>0$ and $x,y\in\mathcal{M}$},
\end{equation}
converging in $\LL^2(\mathcal{M}\times\mathcal{M})$, where we recall that $\{\lambda_k,\phi_k\}$ denotes the eigenvalues and eigenvectors of $-\Delta$ on $\mathcal{M}$. Specifying \eqref{SpectralDecompoHeatKernel} on the diagonal and using $\|\phi_k\|_{\mathrm{L}^2(\mathcal{M})}=1$, we obtain the trace formula
\begin{equation}\label{eq:traceformula}
\sum_{k\geq 1}e^{-t\lambda_k}=\int_{\mathcal{M}}p_t(x,x)\dd\m(x)\quad\text{for any $t>0$.}
\end{equation}
We recall that $\{\phi_k\}_{k\geq 1}$ can be estimated in terms of the eigenvalues,
\begin{equation}\label{eq:eigvaleighfunc}
 \|\phi_k\|_{\LL^{\infty}}\lesssim \lambda^{\frac{1}{2}}_k.
\end{equation}
We briefly recall the argument. Applying the Gagliardo-Nirenberg's interpolation inequality \cite[Theorem 3.70]{aubin1998some}, it holds
$$\|\phi_k\|_{\LL^{\infty}}\lesssim \|\nabla^2 \phi_k\|^{\frac{1}{2}}_{\LL^{2}}\|\phi_k\|^{\frac{1}{2}}_{\LL^2}=\|\nabla^2\phi_k\|^{\frac{1}{2}}_{\LL^2}.$$
In combination with $-\Delta\phi_k=\lambda_k\phi_k$ and elliptic regularity, in form of
$$\|\nabla^2\phi_k\|_{\LL^2}\lesssim \lambda_k\|\phi_k\|_{\LL^2}=\lambda_k,$$
we obtain \eqref{eq:eigvaleighfunc}. 

\medskip

We recall that $(\text{P}_t)_{t>0}$ admits the spectral gap property, that is there exists a constant $C_{\text{sg}}>0$ such that 
\begin{equation}\label{SpectralGap}
\|\pp_t f\|_{\mathrm{L}^2(\mathcal{M})} \le e^{-C_{\text{sg}} t}\|f\|_{\mathrm{L}^2(\mathcal{M})}\quad\text{for any $f \in \mathrm{L}^2$ with $\int_{\mathcal{M}} f\, \dd \m =0$.}
\end{equation}
Note that equivalently, \eqref{SpectralGap} can be formulated in terms of the eigenvalues $\{\lambda_k\}_{k\geq 1}$ in form of 
\begin{equation}\label{SpectralGapEigenvalues}
\inf_{k\geq 1}\lambda_k\geq C_{\text{sg}},
\end{equation}
simply by specifying \eqref{SpectralGap} on $\{\phi_k\}_{k\geq 1}$. Finally, we recall the Riesz-transform bound
\begin{equation}\label{eq:Riesz}
\int_{\mathcal{M}} |\nabla f|^4\,\dd\m  \lesssim \int_{\mathcal{M}}\big|(-\Delta)^\frac12 f\big|^4\,\dd\m\quad\text{for any $f$ such that $\int_{\mathcal{M}}\vert\nabla f\vert^4\,\dd\m<\infty$},
\end{equation}
where $(-\Delta)^{\frac{1}{2}}$ can be defined via its spectral representation
\begin{equation}\label{Laplacian12}
(-\Delta)^{\frac{1}{2}}f=\sum_{k\geq 1} \sqrt{\lambda_k}\,\big(\phi_k,f\big)_{\LL^2}\phi_k\quad\text{for any $f\in \mathrm{H}^{\frac{1}{2}}$,}
\end{equation}
with $(\cdot,\cdot)_{\LL^2}$ the inner product in $\LL^2$. We refer to the monograph \cite{wangbook} for a discussion of the inequalities \eqref{SpectralGap} and \eqref{eq:Riesz}, see Chapter 1 for the case of a Riemannian manifold without boundary and Chapter 2 for the case of a Riemannian manifold with (convex) boundary. In connection with the Wasserstein metric, the heat semigroup satisfies the following classical contraction property.
\begin{lemma}[Semigroup contraction for absolutely continuous measures]\label{lem:HeatContractionrho}
Let $\rho \in\LL^\infty$ satisfying \eqref{Ellipticity}. Given $\rho_t:=\mathrm{P}_t\rho$, it holds
\begin{equation}\label{Eq80}
W_2^2 (\rho_t\,\dd\m, \rho\,\dd\m) \lesssim t \| \rho_t - \rho\|_{\LL^1}\quad\text{for any $t>0$.}
\end{equation}
\end{lemma}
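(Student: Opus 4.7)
The plan is to use the Benamou--Brenier dynamical formulation of the Wasserstein distance together with the linear interpolation $\rho_s := (1-s)\rho + s\rho_t$ for $s\in[0,1]$. First I would observe that, by the maximum principle for the heat semigroup, $\rho_t$ inherits the bounds $\lambda\le\rho_t\le\Lambda$, so that $\rho_s\ge\lambda$ uniformly in $s$ and in $x$. To transport $\rho\,\dd\m$ to $\rho_t\,\dd\m$ along this interpolation I would choose the velocity field $v_s:=\nabla u/\rho_s$, where $u\in\dot{\mathrm{H}}^1$ is the unique solution of the Poisson problem
\begin{equation*}
-\Delta u=\rho-\rho_t,\qquad \int_{\mathcal{M}} u\,\dd\m=0.
\end{equation*}
A direct computation shows that $(\rho_s,v_s)$ satisfies the continuity equation $\partial_s\rho_s+\nabla\cdot(\rho_s v_s)=0$, so Benamou--Brenier's formula \cite{benamou2000computational} yields
\begin{equation*}
W_2^2(\rho\,\dd\m,\rho_t\,\dd\m)\le\int_0^1\!\!\int_{\mathcal{M}}|v_s|^2\rho_s\,\dd\m\,\dd s=\int_0^1\!\!\int_{\mathcal{M}}\frac{|\nabla u|^2}{\rho_s}\,\dd\m\,\dd s\le\frac{1}{\lambda}\int_{\mathcal{M}}|\nabla u|^2\,\dd\m.
\end{equation*}

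The second step is to bound the Dirichlet energy of $u$ by $t\,\|\rho_t-\rho\|_{\LL^1}$. For this I would use the representation
\begin{equation*}
\rho-\rho_t=-\int_0^t\partial_s\mathrm{P}_s\rho\,\dd s=-\Delta\!\int_0^t\mathrm{P}_s\rho\,\dd s,
\end{equation*}
which, together with the uniqueness of the mean-zero solution of the Poisson equation, gives the explicit formula $u=\int_0^t\mathrm{P}_s\rho\,\dd s-c_t$ for the constant $c_t$ that enforces $\int u\,\dd\m=0$. Since $\mathrm{P}_s$ preserves $\LL^{\infty}$-bounds and the mean, one obtains $\|u\|_{\LL^{\infty}}\lesssim t\,\|\rho\|_{\LL^{\infty}}\lesssim t$, where the implicit constant depends only on $\Lambda$ and $\mathcal{M}$.

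Testing the Poisson equation with $u$ itself and integrating by parts then yields
\begin{equation*}
\int_{\mathcal{M}}|\nabla u|^2\,\dd\m=\int_{\mathcal{M}}u\,(\rho-\rho_t)\,\dd\m\le\|u\|_{\LL^{\infty}}\|\rho-\rho_t\|_{\LL^1}\lesssim t\,\|\rho_t-\rho\|_{\LL^1},
\end{equation*}
and combining this with the Benamou--Brenier estimate above gives exactly \eqref{Eq80}.

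The only genuinely subtle point is the lower bound $\rho_s\ge\lambda$, which is what allows the conversion of the kinetic energy $\int|v_s|^2\rho_s$ into the Dirichlet energy of $u$; everything else is a routine energy estimate. An alternative route would be through Kantorovich duality applied to a $1$-Lipschitz test function (trading $W_2$ for $W_1$ by Cauchy--Schwarz together with the uniform density bound), but I find the interpolation argument cleaner because it directly produces the factor $t$ through the explicit representation of $u$ via the heat semigroup.
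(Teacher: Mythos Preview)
Your argument is essentially the same as the paper's: both solve the Poisson equation with right-hand side $\rho_t-\rho$ (the paper calls the solution $g$), bound $W_2^2$ by the Dirichlet energy of that solution using the lower density bound, represent the solution as a time integral of the semigroup applied to $\rho$, and finish with the $\LL^\infty$--$\LL^1$ H\"older estimate. The only cosmetic difference is that the paper packages the first step as a citation to \cite[Corollary 4.4]{ambrosio2019finer}, whereas you spell out the Benamou--Brenier interpolation explicitly. One trivial slip: with your sign convention $-\Delta u=\rho-\rho_t$ you have $\nabla\cdot(\rho_s v_s)=\Delta u=\rho_t-\rho=\partial_s\rho_s$, so the continuity equation reads $\partial_s\rho_s-\nabla\cdot(\rho_s v_s)=0$; either flip the sign of $u$ or of $v_s$ to restore the stated form.
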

\begin{proof}
Using $g$ defined via $-\Delta g=\rho_t-\rho$ together with \eqref{Ellipticity}, \cite[Corollary 4.4]{ambrosio2019finer} yields
$$W^2_2(\rho_t\,\dd\m,\rho\,\dd\m)\lesssim \int_{\mathcal{M}}\vert\nabla g\vert^2\,\dd\m=\int_{\mathcal{M}}(\rho_t-\rho)g\,\dd\m.$$
Writing $g=\int_{0}^{\infty}\text{P}_\tau(\rho_t-\rho)\,\dd\tau=-\int_{0}^t \rho_\tau\,\dd\tau$ together with $\vert\int_{0}^t \rho_\tau\,\dd\tau\vert\leq \|\rho\|_{\LL^{\infty}} t$ gives \eqref{Eq80}.
\end{proof}

\subsection{\texorpdfstring{$\LL^q$}{Lq}-type estimates}
As we have seen in \eqref{LqControlIntro}, we need a sharp control of the averaged Meyers' norm $\mathbb{E}\big[\|\nabla h^{n,t}\|^2_{\LL^{\bar q}}\big]$. This will be obtained as an immediate corollary of the following proposition, for more details see \eqref{LqEstihnt}.
\begin{proposition}[$\LL^{q}$-estimates]\label{prop1}
Let $\{\mu^n\}_n$ be defined in \eqref{eq:empmeas} with point clouds satisfying Assumption \ref{Assumptions}. Let $\bar q$ be the Meyers exponent given in Theorem \ref{Meyers} for the operator $-\nabla\cdot\rho\nabla$. The solution\footnote{with Neumann boundary conditions in case $\mathcal{M}$ has a boundary} $f^{n,t}\in {\rm \dot{H}^1}$ of 
\begin{equation}\label{eq:f^nt}
-\nabla\cdot \rho\nabla f^{n,t} =\mu^{n,t}-\rho_t,
\end{equation}
satisfies:
\begin{equation}\label{Eq22Bis}
\Big(\int_\mathcal{M} \vert\nabla f^{n,t}\vert^{q}\,\dd\m\Big)^\frac{2}{q}\leq \mathcal{C}_{n,t}\frac{\vert\log(t)\vert+\log^{\frac{1}{\eta}}(n)}{n}\quad \text{for any $q\in [2,\min\{\bar q,4\}]$},
\end{equation}
and a random variable $\mathcal{C}_{n,t}$ satisfying for some generic constant $C_q$ depending on $q$,
\begin{equation}\label{MomentBoundCn}
\sup_{n,t}\mathbb{E}[\tfrac{1}{C_{q}}\mathcal{C}_{n,t}]\leq 1.
\end{equation}
\medskip

Furthermore, if \eqref{DecayAlphaMixing} holds with $\eta\geq 1$ then the assumption \eqref{DecatBetaMixing} can be dropped and the stochastic integrability can be improved up to losing a $\log(n)$ factor, namely
\begin{equation}\label{Eq22}
\begin{split}
\lefteqn{\Big(\int_\mathcal{M} \vert\nabla f^{n,t}\vert^{q}\,\dd\m\Big)^\frac{2}{q}}\\ &\leq \mathcal{D}_{n,t}\Big(\frac{\log^{\frac
{1}{\eta}}(n)\vert\log(t)\vert}{n}+\frac{t^{-1}(1+\log^2(n)\mathds{1}_{\eta\neq \infty})}{n^2}\Big)\quad \text{for any $q\in [2,\min\{\bar q,4\}]$},
\end{split}
\end{equation}
and a random variable $\mathcal{D}_{n,t}$ satisfying for some generic constant $D_q$ depending on $q$,
$$\sup_{n,t}\mathbb{E}[\exp(\tfrac{1}{D_{q}}\mathcal{D}^{\frac{1}{2}}_{n,t})]\leq 2.$$
\begin{proof}
We proceed in four steps. In the first step, we prove a representation formula for $(-\Delta)^{\frac{1}{2}}$ that we will use as the core tool in the next steps. In the second step, we compare the two operators $-\nabla\cdot \rho\nabla$ and $-\Delta$, with help of Meyers' estimate recalled in Theorem \ref{Meyers}. Doing so, we then have to bound the $\LL^{q}$-norms ($2\leq q\leq\bar q$) of the gradient of the solution\footnote{which belongs to any $\LL^{q}$ for any $q<\infty$ from Calder\'on-Zygmund' theory, see for instance \cite{giaquinta2013introduction}} to the Poisson equation with r.h.s. $\mu^{n,t}-\rho_t$ and Neumann boundary conditions. We control all the norms by the $\LL^{4}$-norm that, in turn, we estimate using the Riesz-transform bound \eqref{eq:Riesz} and following some ideas from \cite[Lemma 3.17]{ambrosio2019pde}. In the third and fourth steps, we control the bound previously obtained in expectation where our main tool is Assumption \ref{Assumptions} and the concentration inequalities in Proposition \ref{BersteinCorrelated}.

\medskip

{\sc Step 1. A representation formula for $(-\Delta)^{\frac{1}{2}}$. }
We show that given $f\in \cc^2$ such that $n_{\mathcal{M}}\cdot \nabla f=0$ on $\partial \mathcal{M}$ we have
\begin{equation}\label{Eq45}
(-\Delta)^{\frac{1}{2}} f=\frac{1}{\sqrt{\pi}}\int_{0}^\infty\tau^{-\frac{1}{2}}\Delta \text{P}_{\tau}f\,\dd\tau.
\end{equation}
%
%
%
%
%
Note that $(-\Delta)^{\frac{1}{2}}f$, defined in \eqref{Laplacian12}, is well defined in $\LL^2$. Indeed, using the fact that $(\phi_k,f)_{\LL^2}=\frac{1}{\lambda_k}(\phi_k,\Delta f)_{\LL^2}$ and \eqref{SpectralGapEigenvalues}, we have for any $N\leq M<\infty$
\begin{align*}
\bigg\|\sum_{N\leq n\leq M}\sqrt{\lambda_n}\,\big(\phi_n,f)_{\LL^2}\phi_n\bigg\|^2_{\LL^2}=\sum_{N\leq n\leq M}\lambda_n\vert (\phi_n,f)_{\LL^2}\vert^2=&\sum_{N\leq n\leq M}\tfrac{1}{\lambda_n}\vert (\phi_n,\Delta f)_{\LL^2}\vert^2\\
\leq & \tfrac{1}{C_{\text{sg}}}\sum_{n\geq N}\vert (\phi_n,\Delta f)_{\LL^2}\vert^2,
\end{align*}
which vanishes as $N\uparrow\infty$ uniformly in $M$.

\medskip

We now justify \eqref{Eq45}. Observe that since $n_\mathcal{M}\cdot \nabla f=0$ on $\partial\mathcal{M}$,
\begin{equation}\label{Eq44}
\Delta \text{P}_s f=\text{P}_s\Delta f\quad \text{for any $s\in (0,\infty)$,}
\end{equation}
which is a direct consequence of two integration by parts using the heat-kernel representation $\text{P}_s f=\int_{\mathcal{M}}p_s(\cdot,y)f(y)\,\dd \m(y)$. Therefore 
\begin{equation}\label{Laplacian12:Eq1}
\int_{0}^{\infty}\tau^{-\frac{1}{2}}\Delta {\rm P}_{\tau} f\,\dd\tau=\int_{0}^{\infty}\tau^{-\frac{1}{2}}{\rm P}_{\tau} \Delta f\,\dd\tau,
\end{equation}
where the last integral is well-defined in $\LL^2$ since from \eqref{SpectralGap} 
$$\int_0^\infty t^{-\frac{1}{2}}\|\pp_\tau \Delta f\|_{\LL^2}\,\dd\tau\leq\int_0^\infty t^{-\frac{1}{2}}e^{-C_{sg}\tau}\|\Delta f\|_{\LL^2}\,\dd\tau<\infty.$$
We then use the spectral decomposition of the heat semigroup \eqref{SpectralDecompoHeatKernel} to get
\begin{equation}\label{Laplacian12:Eq2}
\pp_\tau \Delta f=\sum_{n} e^{-\lambda_n\tau}\big(\phi_n,\Delta f\big)_{\LL^2}\phi_n\quad\text{in $\LL^2$.}
\end{equation}
%
%
%
The combination of \eqref{Laplacian12:Eq1} and \eqref{Laplacian12:Eq2} yields for any $\eta\in \LL^2$
\begin{equation}\label{Laplacian12:Eq3}
\begin{aligned}
\bigg(\int_{0}^{\infty}\tau^{-\frac{1}{2}}\Delta {\rm P}_{\tau} f,\eta\bigg)_{\LL^{2}}\,\dd\tau=&\int_{0}^\infty \tau^{-\frac{1}{2}}\big(\pp_\tau\Delta f,\eta\big)_{\LL^2}\,\dd\tau\\
\stackrel{\eqref{Laplacian12:Eq2}}{=}&\int_{0}^{\infty}\tau^{-\frac{1}{2}}\sum_{n} e^{-\lambda_n\tau}\big(\phi_n,\Delta f\big)_{\LL^2}\big(\phi_n,\eta\big)_{\LL^{2}}\,\dd\tau.
\end{aligned}
\end{equation}
Using \eqref{SpectralGapEigenvalues}, we have
\begin{align*}
&\int_{0}^\infty \tau^{-\frac{1}{2}}\sum_n e^{-\lambda_n\tau}\vert\big(\phi_n,\Delta f\big)_{\LL^2}\vert\vert \big(\phi_n,\eta\big)_{\LL^2}\vert\,\dd\tau\\
&\leq \Big(\int_{0}^\infty  \tau^{-\frac{1}{2}}e^{-C_{sg}\tau}\,\dd\tau\Big)\sum_{n}\vert\big(\phi_n,\Delta f\big)_{\LL^2}\vert\vert \big(\phi_n,\eta\big)_{\LL^2}\vert<\infty,
\end{align*}
so that we can exchange integration and summation in \eqref{Laplacian12:Eq3} to obtain
\begin{align*}
\bigg(\int_{0}^{\infty}\tau^{-\frac{1}{2}}\Delta {\rm P}_{\tau} f,\eta\bigg)_{\LL^{2}}\,\dd\tau&=\sum_n\Big(\int_{0}^{\infty}\tau^{-\frac{1}{2}}e^{-\lambda_n\tau}\,\dd\tau\Big)\big(\phi_n,\Delta f\big)_{\LL^2}\big(\phi_n,\eta\big)_{\LL^2}\\
&=\sqrt{\pi}\sum_n \tfrac{1}{\sqrt{\lambda_n}}\big(\phi_n,\Delta f\big)_{\LL^2}\big(\phi_n,\eta\big)_{\LL^2}\\
&=\sqrt{\pi}\sum_n \sqrt{\lambda_n}\,\big(\phi_n, f\big)_{\LL^2}\big(\phi_n,\eta\big)_{\LL^2}\stackrel{\eqref{Laplacian12}}{=}\sqrt{\pi}\big((-\Delta f)^{\frac{1}{2}},\eta\big)_{\LL^2},
\end{align*}
which gives \eqref{Eq45} by arbitrariness of $\eta$. Finally, note that the \rhs integral in \eqref{Eq45} is absolutely convergent thanks to the integration by parts $\Delta \pp_\tau f=\pp_{\tau}\Delta f$ and the bounds on the heat kernel \eqref{Eq28}, so that it defines a function in $\cc^0$.
\medskip

{\sc Step 2. Comparison with the solution of the Poisson equation. }We claim that for any $q\in [2,\min\{\bar q,4\}]$ and $p<\infty$
\begin{equation}\label{Eq51}
\begin{aligned}
\mathbb{E}\bigg[\Big(\int_{\mathcal{M}}\vert \nabla f^{n,t}\vert^q\,\dd\m\Big)^{\frac{2p}{q}}\bigg]^{\frac{1}{p}}\lesssim_q \bigg(\int_{\mathcal{M}}\dd\m\bigg(\int_{0}^{\infty}\dd s\,\mathbb{E}\Big[\Big((-s\Delta)^{\frac{1}{2}}\text{P}_{s+t}(\mu^n-\rho)\Big)^{2p}\Big]^{\frac{1}{p}}\bigg)^2\bigg)^{\frac{1}{2}}.
\end{aligned}
\end{equation}
Let $g^{n,t}\in {\rm \dot{H}^1}$ be the solution of the following Poisson equation 
\begin{equation}\label{Eq47}
        -\Delta g^{n,t} =\mu^{n,t}-\rho_t.
\end{equation}
Re-expressing the right-hand side of \eqref{eq:f^nt} as $\nabla\cdot \nabla g^{n,t}$, we apply Meyers' estimate recalled in Theorem \ref{Meyers} and Hölder's inequality to obtain: 
\begin{equation}\label{Eq41}
\Big(\int_\mathcal{M} \vert\nabla f^{n,t}\vert^{q}\,\dd\m\Big)^\frac{2}{q}\lesssim \Big(\int_\mathcal{M} \vert\nabla g^{n,t}\vert^{q}\,\dd\m\Big)^\frac{2}{q}\lesssim_q\Big(\int_\mathcal{M} \vert\nabla g^{n,t}\vert^{4}\,\dd\m\Big)^\frac{1}{2}\quad\text{for any $q\in [2,\min\{\bar q,4\}]$,}
\end{equation}
We now introduce the Paley-Littlewood functional
$$\mathcal{L}(g):=\Big(\int_{0}^{\infty}s(\partial_s\text{P}_s g)^2\,\dd s\Big)^{\frac{1}{2}}\quad \text{for any $g\in \LL^{4}$ and $\int_{\mathcal{M}} g=0$.}$$
We recall that the inverse of $\mathcal{L}$ is continuous, see \cite{stein2016topics}, namely
\begin{equation}\label{Eq40}
\|g\|_{\LL^{4}}\lesssim \|\mathcal{L}(g)\|_{\LL^{4}}.
\end{equation}
Combining the Riesz transform bound \eqref{eq:Riesz} with \eqref{Eq40} yields
\begin{equation}\label{Eq50}
\begin{aligned}
\int_{\mathcal{M}}\vert\nabla g^{n,t}\vert^4\,\dd\m\lesssim\int_{\mathcal{M}}\Big(\mathcal{L}\Big((-\Delta)^{\frac{1}{2}}g^{n,t}\Big)\Big)^4\dd\m
=\int_{\mathcal{M}}\dd\m\Big(\int_{0}^{\infty}\dd s\, s(\partial_s\text{P}_s(-\Delta)^{\frac{1}{2}} g^{n,t})^2\Big)^{2}.
\end{aligned}
\end{equation}
We now claim that
\begin{equation}\label{Eq43}
\partial_s\text{P}_s(-\Delta)^{\frac{1}{2}}g^{n,t}=(-\Delta)^{\frac{1}{2}}\text{P}_{s+t}(\mu^n-\rho)\quad\text{for any $s\geq 0$,}
\end{equation}
which requires a special care when $\mathcal{M}$ has a boundary. We use the definition of $\text{P}_s$ in form of $\partial_s\text{P}_s=\Delta\text{P}_s$  to get
\begin{equation}\label{Eq46}
\partial_s\text{P}_s(-\Delta)^{\frac{1}{2}}g^{n,t}=\Delta\text{P}_s(-\Delta)^{\frac{1}{2}}g^{n,t}.
\end{equation}
Recalling that $n_{\mathcal{M}}\cdot \nabla g^{n,t}=0$, \eqref{Eq44} implies that $\Delta\text{P}_{\tau}g^{n,t}=\text{P}_{\tau}\Delta g^{n,t}$ which, combined with \eqref{Eq47} and \eqref{Eq45}, gives
\begin{equation}\label{Eq48}
(-\Delta)^{\frac{1}{2}} g^{n,t}=\frac{1}{\sqrt{\pi}}\int_{0}^\infty \tau^{-\frac{1}{2}}\text{P}_{\tau}(\mu^{n,t}-\rho_t)\,\dd\tau.
\end{equation}
In particular, it implies that $n_{\mathcal{M}}\cdot \nabla(-\Delta)^{\frac{1}{2}}g^{n,t}=0$. Therefore, one can use once more \eqref{Eq44} and, together with \eqref{Eq48}, \eqref{Eq46} turns into
\begin{equation}\label{Laplace12:Eq5}
\partial_s\text{P}_s(-\Delta)^{\frac{1}{2}}g^{n,t}=\frac{1}{\sqrt{\pi}}\int_{0}^\infty\tau^{-\frac{1}{2}}\text{P}_s\Delta\text{P}_{\tau}(\mu^{n,t}-\rho_t)\,\dd\tau.
\end{equation}
Using a last time \eqref{Eq44} in form of $\text{P}_s\Delta\text{P}_{\tau}(\mu^{n,t}-\rho_t)=\Delta\text{P}_s\text{P}_{\tau}(\mu^{n,t}-\rho_t)$ that we combine with the semigroup property $\text{P}_{t}\text{P}_{t'}=\text{P}_{t+t'}$ yields
$$\text{P}_s\Delta\text{P}_{\tau}(\mu^{n,t}-\rho_t)=\Delta\text{P}_{\tau}\text{P}_{s+t}(\mu^{n}-\rho),$$
which, together with \eqref{Laplace12:Eq5} and \eqref{Eq45} leads to \eqref{Eq43}.

\medskip

The combination of \eqref{Eq41}, \eqref{Eq50} and \eqref{Eq43} together with Minkowski's inequality gives
\begin{equation*}
\begin{aligned}
\mathbb{E}\bigg[\Big(\int_{\mathcal{M}}\vert \nabla f^{n,t}\vert^q\,\dd\m\Big)^{\frac{2p}{q}}\bigg]^{\frac{1}{p}}\stackrel{\eqref{Eq41}}{\lesssim}&\mathbb{E}\bigg[\bigg(\int_{\mathcal{M}}\dd\m\bigg(\int_{0}^{\infty}\dd s\,\Big((-s\Delta)^{\frac{1}{2}}\text{P}_{s+t}(\mu^n-\rho)\Big)^2\bigg)^2\bigg)^{\frac{p}{2}}\bigg]^{\frac{1}{p}}\\
\leq& \bigg(\int_{\mathcal{M}}\dd\m\bigg(\int_{0}^{\infty}\dd s\,\mathbb{E}\Big[\Big((-s\Delta)^{\frac{1}{2}}\text{P}_{s+t}(\mu^n-\rho)\Big)^{2p}\Big]^{\frac{1}{p}}\bigg)^2\bigg)^{\frac{1}{2}},
\end{aligned}
\end{equation*}
which is \eqref{Eq51}.

\medskip

{\sc Step 3. Proof of \eqref{Eq22Bis}. }The estimate \eqref{Eq22Bis} is a consequence of \eqref{Eq51} applied with $p=1$ and 
\begin{equation}\label{Eq54Bis}
\mathbb{E}\Big[\Big((-s\Delta)^{\frac{1}{2}}\text{P}_{s+t}(\mu^n-\rho)(x)\Big)^2\Big]\lesssim \frac{\zeta(s,t)}{n}(1+\log^{\frac{1}{\eta}}(n)s^{\frac{1}{\log(n)}}\zeta(s,t)^{1-\frac{1}{\log(n)}}) \quad\text{for any  $s\in (0,\infty)$,}
\end{equation}
with 
\begin{equation}\label{DefZeta}
\zeta(s,t):=\min\big\{(s+t)^{-1},(s+t)^{-2}\big\}.
\end{equation}
Indeed, plugging \eqref{Eq54Bis} in \eqref{Eq51} gives
\begin{equation}\label{Eq57Bis}
\mathbb{E}\bigg[\Big(\int_{\mathcal{M}}\vert\nabla f\vert^q\,\dd\m\Big)^{\frac{2}{q}}\bigg]\lesssim \frac{1}{n}\int_{0}^{\infty}\zeta(s,t)(1+\log^{\frac{1}{\eta}}(n)s^{\frac{1}{\log(n)}}\zeta(s,t)^{1-\frac{1}{\log(n)}})\,\dd s.
\end{equation}
Using that 
$$\int_{0}^{\infty}\zeta(s,t)\,\dd s\leq t^{-1}\int_{0}^t\dd s+\int_{t}^1s^{-1}\,\dd s+\int_{1}^{\infty}s^{-2}\,\dd s\lesssim \vert\log(t)\vert,$$
and analogously
$$\int_{0}^{\infty}s^{\frac{1}{\log(n)}}\zeta(s,t)^{1-\frac{1}{\log(n)}}\,\dd s\lesssim 1,$$
\eqref{Eq57Bis} implies \eqref{Eq22Bis}.

\medskip

We now show \eqref{Eq54Bis}. First, using the definition \eqref{eq:empmeas} of $\mu^n$, we have
\begin{equation}\label{ExpandSquareLaplace}
(-s\Delta)^{\frac{1}{2}}\text{P}_{s+t}(\mu^n-\rho)=\frac{1}{n}\sum_{k=1}^n\omega_{s+t}(\cdot,X_k)\quad\text{with}\quad\omega_{s+t}(\cdot,y):=(-s\Delta)^{\frac{1}{2}}\text{P}_{s+t}(\delta_y-\rho),
\end{equation}
such that expanding the square gives
\begin{align*}
\Big((-s\Delta)^{\frac{1}{2}}\text{P}_{s+t}(\mu^n-\rho)(x)\Big)^2=\frac{1}{n^2}\sum_{k=1}^n \big(\omega_{s+t}(x,X_k)\big)^2+\frac{2}{n^2}\sum_{1\leq \ell<\ell'\leq n}\omega_{s+t}(x,X_\ell)\omega_{s+t}(x,X_{\ell'}).
\end{align*}
The estimate \eqref{Eq54Bis} is then a consequence of
\begin{equation}\label{Eq53}
\mathbb{E}[\vert\omega_{s+t}(x,X_1)\vert^2]\lesssim \min\Big\{(s+t)^{-1},(s+t)^{-2}\Big\}\quad\text{and}\quad\sup_{y\in\mathcal{M}}\vert\omega_{s+t}(x,y)\vert\lesssim\sqrt{s} \min\Big\{(s+t)^{-\frac{3}{2}},(s+t)^{-2}\Big\}.
\end{equation}
Indeed, the first item of \eqref{Eq53} immediately implies
$$\mathbb{E}\bigg[\sum_{k=1}^n \big(\omega_{s+t}(\cdot,X_k)\big)^2\bigg]\lesssim n\zeta(s,t),$$
while, using the assumption \eqref{DecayAlphaMixing} and that $t^{-\frac{3}{\log(n)}}\lesssim 1$,
\begin{equation}\label{Eq53Bis}
\begin{aligned}
&\sum_{1\leq \ell<\ell'\leq n}\mathbb{E}[\omega_{s+t}(\cdot,X_\ell)\omega_{s+t}(\cdot,X_{\ell'})]\\
&=\sum_{1\leq \ell<\ell'\leq n}\mathbb{E}[\omega_{s+t}(\cdot,X_\ell)\omega_{s+t}(\cdot,X_{\ell'})]^{\frac{1}{\log(n)}}\mathbb{E}[\omega_{s+t}(\cdot,X_\ell)\omega_{s+t}(\cdot,X_{\ell'})]^{1-\frac{1}{\log(n)}}\\
&\leq \sum_{1\leq \ell<\ell'\leq n}\alpha^{\frac{1}{\log(n)}}_{\ell-\ell'}\big(\sup_{y\in\mathcal{M}}\vert \omega_{s+t}(x,y)\vert\big)^{\frac{2}{\log(n)}}\mathbb{E}[\vert \omega_{s+t}(x,X_1)\vert^2]^{1-\frac{1}{\log(n)}}\\
&\stackrel{\eqref{Eq53},\eqref{DecayAlphaMixing}}\lesssim n\log^{\frac{1}{\eta}}(n)s^{\frac{1}{\log(n)}}t^{-\frac{3}{\log(n)}}\zeta(s,t)^{1-\frac{1}{\log(n)}}\lesssim\log^{\frac{1}{\eta}}(n)s^{\frac{1}{\log(n)}}\zeta(s,t)^{1-\frac{1}{\log(n)}}.
\end{aligned}
\end{equation}

It remains to prove \eqref{Eq53} and we start with the first item. Here, we use the fact that  
\begin{equation}\label{UniformContinuityL2}
\|(-s\Delta)^{\frac{1}{2}}\text{P}_{\frac{s}{2}}f\|_{\LL^2}\lesssim \|f\|_{\LL^2}\quad\text{for all $f\in\cc^2$ and uniformly in $s$.}
\end{equation}
This can be seen from \eqref{Laplacian12}: Using that $\pp_{\frac{s}{2}}$ is an auto-adjoint operator in $\LL^2$ and that from \eqref{Laplacian12:Eq2} we learn $\pp_{\frac{s}{2}}\phi_n =e^{-\lambda_n \frac{s}{2}}\phi_n$, we have 
\begin{align*}
\|(-s\Delta)^{\frac{1}{2}}\text{P}_{\frac{s}{2}}f\|^2_{\LL^2}=\sum_n (s\lambda_n)\vert \big(\phi_n,\pp_{\frac{s}{2}} f\big)_{\LL^2}\vert^2=\sum_n(s\lambda_n)e^{-s\lambda_n }\vert \big(\phi_n,f\big)_{\LL^2}\vert^2\lesssim \|f\|^2_{\LL^2}.
\end{align*}
Hence, using the semi-group property $\text{P}_{s+t}=\text{P}_{\frac{s}{2}}\text{P}_{\frac{s}{2}+t}$, we deduce
\begin{align*}
\mathbb{E}[\vert\omega_{s+t}(x,X_1)\vert^2]\lesssim \|(-s\Delta)^{\frac{1}{2}}\text{P}_{s+t}(\delta_y-\rho)\|^2_{\LL^{2}}&=\|(-s\Delta)^{\frac{1}{2}}\text{P}_{\frac{s}{2}}\text{P}_{\frac{s}{2}+t}(\delta_y-\rho)\|^2_{\LL^{2}}\\
&\stackrel{\eqref{UniformContinuityL2}}{\lesssim} \|\text{P}_{\frac{s}{2}+t}(\delta_y-\rho)\|^2_{\LL^{2}}.
\end{align*}
We now bound $\|\text{P}_{\frac{s}{2}+t}(\delta_y-\rho)\|^2_{\LL^{2}}$ in two different ways. First, using the bounds \eqref{Eq28} of the heat-kernel, we have by the triangle inequality
$$\|\text{P}_{\frac{s}{2}+t}(\delta_y-\rho)\|^2_{\LL^{2}}\lesssim \|p_{\frac{s}{2}+t}(\cdot,y)\|^2_{\LL^{2}}\stackrel{\eqref{Eq28}}{\lesssim} (s+t)^{-1},$$
yielding to the first alternative in the first item of \eqref{Eq53}. Second, applying Poincaré's inequality yields
$$\|\text{P}_{\frac{s}{2}+t}(\delta_y-\rho)\|^2_{\LL^{2}}\lesssim \|\nabla p_{\frac{s}{2}+t}(\cdot,y)\|^2_{\LL^{2}}\stackrel{\eqref{Eq28}}{\lesssim} (s+t)^{-2},$$
yielding to the second alternative in the first item of \eqref{Eq53}.

\medskip

We now turn to the second item of \eqref{Eq53}. Here, we make use of the representation formula \eqref{Eq45} applied to $f=\pp_{s+t}(\delta_y-\rho)$. Using the semi-group property $\text{P}_{\tau}\text{P}_{s+t}=\text{P}_{\tau+s+t}$, this takes the form
$$\omega_{s+t}(x,y)=\sqrt{\frac{s}{\pi}}\int_{0}^{\infty}\tau^{-\frac{1}{2}}\Delta\text{P}_{\tau+s+t}(\delta_y-\rho)(x)\,\dd\tau.$$
A direct application of the heat-kernel bounds \eqref{Eq28} leads to
\begin{align*}
\sup_{y\in\mathcal{M}}\vert\omega_{s+t}(x,y)\vert&\lesssim\sqrt{s}\int_{0}^{\infty}\tau^{-\frac{1}{2}}(\tau+s+t)^{-2}\,\dd\tau\lesssim \sqrt{s}\,(s+t)^{-\frac{3}{2}},
\end{align*}
which is the first alternative in the second item of \eqref{Eq53}. For the second alternative, we write 
$$\Delta \text{P}_{\tau+s+t}(\delta_y-\rho)(x)=\int_{\mathcal{M}}\rho\big(\Delta p_{\tau+s+t}(x,y)-\Delta p_{\tau+s+t}(x,\cdot)\big)\,\dd\m,$$
so that, using \eqref{Eq28},
\begin{align*}
\sup_{y\in\mathcal{M}}\vert\omega_{s+t}(x,y)\vert&\lesssim \sqrt{s}\int_{0}^{\infty}\tau^{-\frac{1}{2}}\sup_{y\in\mathcal{M}}\vert\nabla \Delta p_{\tau+s+t}(x,y)\vert\,\dd\tau\\
&\stackrel{\eqref{Eq28}}{\lesssim}\int_{0}^{\infty}\tau^{-\frac{1}{2}}(\tau+s+t)^{-\frac{5}{2}}\,\dd\tau\lesssim \sqrt{s}\,(s+t)^{-2}.
\end{align*}
{\sc Step 4. Proof of \eqref{Eq22}. }Following the same computations done in the previous step, the estimate \eqref{Eq22} is a consequence of \eqref{Eq51} and the moment bounds
\begin{equation}\label{Eq54}
\begin{aligned}
&\mathbb{E}\Big[\Big((-s\Delta)^{\frac{1}{2}}\text{P}_{s+t}(\mu^n-\rho)(x)\Big)^{2p}\Big]^{\frac{1}{p}}\\
&\lesssim p^2\bigg(\frac{\zeta(s,t)}{n}(1+\log^{\frac{1}{\eta}}(n)s^{\frac{1}{2\log(n)}}\zeta(s,t)^{1-\frac{1}{\log(n)}})+\frac{\sup_{y\in\mathcal{M}}\vert \omega_{s+t}(x,y)\vert+\log^2(n)}{n^2}\bigg) \quad\text{for any  $s\in (0,\infty)$,}
\end{aligned}
\end{equation}
together with the second item of \eqref{Eq53} and Lemma \ref{momentexp}, where we recall that $\zeta$ is defined in \eqref{DefZeta}.

\medskip

We now show \eqref{Eq54}. Using \eqref{ExpandSquareLaplace} and the assumption $\eta\geq 1$, we apply the concentration inequality in Proposition \ref{BersteinCorrelated} to the effect of: for any $\lambda\geq 0$
\begin{equation}\label{BoundProbaTailLqEsti}
\begin{aligned}
&\mathbb{P}\bigg(\Big\vert(-s\Delta)^{\frac{1}{2}}\text{P}_{s+t}(\mu^n-\rho)(x)\Big\vert\geq \lambda\bigg)\\
&\lesssim \exp\bigg(-\frac{1}{C}\frac{n^2\lambda^2}{nv^2+\big(\sup_{y\in\mathcal{M}}\vert\omega_{s+t}(x,y)\vert\big)^2+n\lambda\big(\sup_{y\in\mathcal{M}}\vert\omega_{s+t}(x,y)\vert\big)\log^2(n)}\bigg),
\end{aligned}
\end{equation}

with 
$$v^2:=\mathbb{E}[\vert\omega_{s+t}(x,X_1)\vert^2]+2\sum_{1\leq \ell<\ell'\leq n}\big\vert\mathbb{E}[\omega_{s+t}(x,X_\ell)\omega_{s+t}(x,X_{\ell'})]\big\vert.$$
We then obtain \eqref{Eq54} combining \eqref{BoundProbaTailLqEsti} with \eqref{Eq53} and \eqref{Eq53Bis}, together with an application of the Layer-cake formula.
\end{proof}

%
%
%

%
\end{proposition}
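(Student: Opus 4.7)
The plan is to proceed in four main steps, reducing step by step the operator $-\nabla\cdot\rho\nabla$ to a quantity involving only the scalar heat semigroup acting on $\mu^n-\rho$, then estimating the resulting object by its moments.

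\textbf{Step 1: Representation formula.} First I would establish the Balakrishnan-type identity
$$(-\Delta)^{1/2}f = \tfrac{1}{\sqrt{\pi}}\int_0^\infty \tau^{-1/2}\Delta \mathrm{P}_\tau f\,\mathrm{d}\tau$$
for sufficiently smooth $f$ with vanishing Neumann trace (if there is a boundary). This is proved via the spectral decomposition \eqref{Laplacian12} of $(-\Delta)^{1/2}$ together with \eqref{SpectralDecompoHeatKernel}, using the spectral gap \eqref{SpectralGapEigenvalues} to justify the exchange of sum and integral (the identity $\int_0^\infty\tau^{-1/2}e^{-\lambda\tau}\mathrm{d}\tau=\sqrt{\pi/\lambda}$ is the key point). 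One must be careful with boundary conditions: $\Delta\mathrm{P}_s = \mathrm{P}_s\Delta$ holds precisely because $n_\mathcal{M}\cdot \nabla f=0$ is preserved by the heat semigroup.

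\textbf{Step 2: Passing from $-\nabla\cdot\rho\nabla$ to $-\Delta$ and reducing to a Paley-Littlewood object.} I would introduce the auxiliary Poisson problem $-\Delta g^{n,t}=\mu^{n,t}-\rho_t$ (with Neumann b.c. when needed) and rewrite the right-hand side of \eqref{eq:f^nt} as $\nabla\cdot\nabla g^{n,t}$. Meyers' estimate then gives $\|\nabla f^{n,t}\|_{\mathrm{L}^q}\lesssim \|\nabla g^{n,t}\|_{\mathrm{L}^q}$ for $q\in[2,\bar q]$, and H\"older reduces everything to the $\mathrm{L}^4$-norm. For this, the Riesz-transform bound \eqref{eq:Riesz} gives $\int|\nabla g^{n,t}|^4 \lesssim \int |(-\Delta)^{1/2}g^{n,t}|^4$, after which I apply the continuity of the inverse Paley-Littlewood functional $\|h\|_{\mathrm{L}^4}\lesssim \|\mathcal L(h)\|_{\mathrm{L}^4}$. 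Combining with the commutation $\partial_s\mathrm{P}_s(-\Delta)^{1/2}g^{n,t} = (-\Delta)^{1/2}\mathrm{P}_{s+t}(\mu^n-\rho)$ (which follows by plugging the Step 1 formula into $g^{n,t}$ and using $\mathrm{P}_s\Delta = \Delta \mathrm{P}_s$ together with the semigroup property), Minkowski yields the master bound
$$\mathbb{E}\Big[\|\nabla f^{n,t}\|_{\mathrm{L}^q}^{2p/q \cdot q/2}\Big]^{1/p} \lesssim \bigg(\int_\mathcal{M}\Big(\int_0^\infty \mathbb{E}\big[\big((-s\Delta)^{1/2}\mathrm{P}_{s+t}(\mu^n-\rho)(x)\big)^{2p}\big]^{1/p}\,\mathrm{d}s\Big)^2\mathrm{d}\mathrm{m}(x)\bigg)^{1/2}.$$

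\textbf{Step 3: Moment bounds for the random kernel.} Writing $\omega_{s+t}(x,\cdot):=(-s\Delta)^{1/2}\mathrm{P}_{s+t}(\delta_\cdot-\rho)$, I would expand $\big((-s\Delta)^{1/2}\mathrm{P}_{s+t}(\mu^n-\rho)(x)\big)^2$ into a diagonal term of order $\tfrac{1}{n}\mathbb{E}[\omega_{s+t}(x,X_1)^2]$ and off-diagonal cross terms. For the diagonal, I bound $\mathbb{E}[\omega_{s+t}(x,X_1)^2]$ by using $\|(-s\Delta)^{1/2}\mathrm{P}_{s/2}\|_{\mathrm{L}^2\to\mathrm{L}^2}\lesssim 1$ (visible from \eqref{Laplacian12:Eq2}) combined with the heat-kernel bound \eqref{Eq28} and Poincar\'e, yielding $\min\{(s+t)^{-1},(s+t)^{-2}\}$. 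The pointwise bound $\sup_y|\omega_{s+t}(x,y)|\lesssim \sqrt{s}\min\{(s+t)^{-3/2},(s+t)^{-2}\}$ follows by plugging the Step 1 formula with $f=\mathrm{P}_{s+t}(\delta_y-\rho)$ and using \eqref{Eq28}. For the off-diagonal terms, the crucial device is an $\mathrm{L}^2$--$\mathrm{L}^\infty$ interpolation at scale $1/\log n$: splitting $\mathbb{E}[\omega\cdot \omega']=\mathbb{E}[\omega\cdot\omega']^{1-1/\log n}\mathbb{E}[\omega\cdot\omega']^{1/\log n}$ and using \eqref{DecayAlphaMixing} gives a factor $\alpha_\ell^{1/\log n}\lesssim \exp(-b\ell^\eta/\log n)$, whose sum over $\ell\geq 1$ is $\lesssim \log^{1/\eta}(n)$.

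\textbf{Step 4: From second moments to high moments.} For \eqref{Eq22Bis} applied with $p=1$, Step 3 directly plugs into the master bound of Step 2 and an explicit $s$-integration (splitting into $s<t$, $t<s<1$, $s>1$) produces the logarithmic factor $|\log t|+\log^{1/\eta}(n)$. For the stronger \eqref{Eq22}, I would apply the Bernstein-type concentration inequality of Proposition \ref{BersteinCorrelated} to the sum $\tfrac{1}{n}\sum_k \omega_{s+t}(x,X_k)$; a layer-cake argument then converts the resulting sub-Gaussian/sub-exponential tail into the $p$-th moment bound with the desired $p^2$ prefactor, and Lemma \ref{momentexp} packages this as exponential integrability of $\mathcal{D}_{n,t}^{1/2}$. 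The hardest part is the combined treatment of correlations and the $s$-dependence: the interpolation exponent $1/\log n$ must be chosen so that both $\sum_\ell \alpha_\ell^{1/\log n}$ and $\big(\sup_y|\omega_{s+t}(x,y)|\big)^{2/\log n}$ remain harmless after integrating over $s$, which is exactly why the bound takes the particular form \eqref{Eq54Bis}.
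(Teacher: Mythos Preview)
Your proposal is correct and follows essentially the same route as the paper: the same representation formula for $(-\Delta)^{1/2}$, the same Meyers/Riesz/Paley--Littlewood reduction to the master bound \eqref{Eq51}, the same $\mathrm{L}^2$--$\mathrm{L}^\infty$ interpolation at exponent $1/\log n$ to handle the $\alpha$-mixing in the off-diagonal terms, and the same upgrade to exponential moments via Proposition~\ref{BersteinCorrelated} plus layer-cake plus Lemma~\ref{momentexp}. The only cosmetic slip is in the exponent of your displayed master bound (you wrote $2p/q\cdot q/2$ where you mean simply $2p$), but the intended statement is clear and matches the paper.
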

\subsection{Fluctuation estimates}\label{FluctuationSection}
This section is devoted to justify \eqref{FluctuationEstimates} needed to ensure the condition \eqref{SmallnessH} with very high probability. Our result is in the spirit of \cite[Theorem 3.3]{ambrosio2019finer}. However, our strategy differs from \cite[Theorem 3.3]{ambrosio2019finer} and is based on Schauder's theory, with an additional special care on the dependences on $\delta$.
We briefly sketch the main ingredients of the fluctuation estimates \eqref{FluctuationEstimates}. By linearity, it is enough to show \eqref{FluctuationEstimates} for $f^{n,t}_\delta\in \dot{\mathrm{H}}^1$ being the solution of 
\begin{equation}\label{Eq2}
-\nabla\cdot\rho_\delta\nabla f^{n,t}_\delta=\mu^{n,t}-\rho_t.
\end{equation}
We then make use of the chain rule to expand the equation in
\begin{equation}\label{Eq3}
-\Delta f^{n,t}_\delta=\tfrac{1}{\rho_\delta}\nabla\rho_\delta\cdot\nabla f^{n,t}_\delta+\tfrac{1}{\rho_\delta}(\mu^{n,t}-\rho_t),
\end{equation}
and we define the auxiliary problem
\begin{equation}\label{AuxiliaryProblemFluctu}
-\Delta u^{n,t}_\delta=\tfrac{1}{\rho_\delta}(\mu^{n,t}-\rho_t),
\end{equation}
such that the difference $v^{n,t}_\delta:=f^{n,t}_\delta-u^{n,t}_\delta$ solves $-\Delta v^{n,t}_\delta=\tfrac{1}{\rho_\delta}\nabla\rho_\delta\cdot\nabla f^{n,t}_\delta$. Doing so, on the one hand, we can control $u^{n,t}_\delta$ which can be handled using the explicit formula in terms of the heat-kernel, the explicit bounds on the latter, cf \eqref{Eq28}, and the regularity of $\frac{1}{\rho_\delta}$. On the other hand, we use Schauder's estimates to control $v^{n,t}_\delta$ and $f^{n,t}_\delta$. Using the fact that those estimates depend polynomially on $\|\rho_\delta\|_{\cc^{0,\alpha}}$, we can keep track on the dependences on $\delta$ that we can optimize later on. We finally mention that in the case where $\mathcal{M}$ has a boundary, $u^{n,t}_\delta$ cannot be directly defined by \eqref{AuxiliaryProblemFluctu} since the \rhs does not have zero mean. In order to also include this case, we add a zero order term in the equation, see \eqref{Eq3Bis}.
\begin{proposition}[Fluctuation estimates]\label{Fluctuation}
Let $\{\mu^n\}_n$ be defined in \eqref{eq:empmeas} with point clouds satisfying Assumption \ref{Assumptions}. For any parameter $\delta\in (0,1)$ and $\upsilon>0$, we define\footnote{with Neumann boundary conditions in case $\mathcal{M}$ has a boundary} $u^{n,t}_{\delta}\in \mathrm{H}^1$ weak solution of
\begin{equation}\label{Eq3Bis}
u^{n,t}_{\delta} -\Delta u^{n,t}_{\delta}=\tfrac{1}{\rho_\delta}(\mu^{n,t}-\rho_t),
\end{equation}
and the two events
\begin{equation}\label{Events}
\mathcal{A}_n:=\Big\{\|\mu^{n,t}-\rho_t\|_{\LL^{\infty}}\leq \tfrac{1}{\log^{\upsilon}(n)}\Big\}\quad \text{and}\quad\mathcal{B}_{\delta,n}:=\Big\{\big\|(\nabla u^{n,t}_\delta,\nabla^2 u^{n,t}_{\delta})\big\|_{\LL^{\infty}}\leq \tfrac{1}{\log^{\upsilon}(n)}\Big\}.
\end{equation}
There exists $\kappa>0$ such that for any $\kappa_1>0$ and the choice 
\begin{equation}\label{Eq:ChoicesPara}
\delta=\delta_n:=\tfrac{1}{\log^{\kappa_1}(n)},
\end{equation}
the solution $f^{n,t}_\delta\in \dot{\mathrm{H}}^1$ of \eqref{Eq2} satisfies
\begin{equation}\label{Eq6}
\mathds{1}_{\mathcal{A}_n\cap\mathcal{B}_{\delta,n}}\big\|(\nabla f^{n,t}_\delta,\nabla^2 f^{n,t}_{\delta})\big\|_{\LL^{\infty}}\leq \frac{1}{\log^{\upsilon-(\kappa+2)\kappa_1}(n)}.
\end{equation}
Furthermore, there exists $\kappa_2>0$ depending on $\upsilon$ such that for the choice $t=t_n:=\frac{\log^{\kappa_2}(n)}{n}$, we have
\begin{equation}\label{Eq13}
\mathbb{P}(\mathcal{A}^{c}_n\cup\mathcal{B}^c_{\delta,n})=o(\tfrac{1}{n^\ell})\quad\text{for any $\ell\in\mathbb{N}$}.
\end{equation}
%

%
%
%
%
%
%
%
%
%
%
\end{proposition}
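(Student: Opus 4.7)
The plan is to prove the two parts separately: first the deterministic implication \eqref{Eq6}, valid on the event $\mathcal{A}_n \cap \mathcal{B}_{\delta,n}$, then the concentration statement \eqref{Eq13}. The deterministic part is an elliptic regularity argument that carefully tracks the dependence of the Schauder constants on $\|\rho_\delta\|_{\cc^{0,\alpha}}$, which blows up polynomially in $\delta^{-1}$; the probabilistic part combines the Bernstein-type inequality of Proposition \ref{BersteinCorrelated} with a union bound on a polylogarithmic net.

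For the deterministic estimate, following the decomposition outlined right before the statement, I would rewrite \eqref{Eq2} via the chain rule as $-\Delta f^{n,t}_\delta=\tfrac{1}{\rho_\delta}\nabla\rho_\delta\cdot\nabla f^{n,t}_\delta+\tfrac{1}{\rho_\delta}(\mu^{n,t}-\rho_t)$, and subtract \eqref{Eq3Bis} to see that $v^{n,t}_\delta:=f^{n,t}_\delta-u^{n,t}_\delta$ solves a Poisson equation whose right-hand side is a linear combination of $\nabla\rho_\delta\cdot\nabla f^{n,t}_\delta$, of the quantities already controlled by $\mathcal{B}_{\delta,n}$, and of $\tfrac{1}{\rho_\delta}(\mu^{n,t}-\rho_t)$ controlled by $\mathcal{A}_n$. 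Bootstrapping interior Schauder estimates on $v^{n,t}_\delta$ from an $\LL^\infty$ energy bound up to $\cc^{2,\alpha}$, and reconstructing $\nabla^k f^{n,t}_\delta=\nabla^k u^{n,t}_\delta+\nabla^k v^{n,t}_\delta$, would yield
\begin{equation*}
\|(\nabla f^{n,t}_\delta,\nabla^2 f^{n,t}_\delta)\|_{\LL^\infty}\lesssim (1+\|\rho_\delta\|_{\cc^{1,\alpha}})^N\Big(\|(\nabla u^{n,t}_\delta,\nabla^2 u^{n,t}_\delta)\|_{\LL^\infty}+\|\mu^{n,t}-\rho_t\|_{\LL^\infty}\Big),
\end{equation*}
for some $N$ depending only on $\mathcal{M}$ and $\lambda,\Lambda$. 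The heat-kernel bound \eqref{Eq28} yields $\|\rho_\delta\|_{\cc^{1,\alpha}}\lesssim \delta^{-(1+\alpha)/2}\|\rho\|_{\LL^\infty}$, so the choice $\delta=\log^{-\kappa_1}(n)$ produces a prefactor bounded by $\log^{c\kappa_1}(n)$ for a $c$ depending only on $N$. Setting $\kappa:=c-2$ and using the thresholds defining $\mathcal{A}_n\cap\mathcal{B}_{\delta,n}$ then yields \eqref{Eq6}.

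For the probabilistic estimate, I would treat $\mathcal{A}_n^c$ and $\mathcal{B}_{\delta,n}^c$ in parallel. Writing $\mu^{n,t}(x)-\rho_t(x)=\tfrac{1}{n}\sum_{i=1}^n\omega_t(x,X_i)$ with $\omega_t(x,y):=p_t(x,y)-\pp_t\rho(x)$, the heat-kernel bounds \eqref{Eq28} give $\|\omega_t\|_{\LL^\infty}\lesssim t^{-1}$ and $\mathbb{E}[\omega_t(x,X_1)^2]\lesssim t^{-1}$, so Proposition \ref{BersteinCorrelated} provides a pointwise tail estimate of the form $\mathbb{P}\big(|\mu^{n,t}(x)-\rho_t(x)|\geq \log^{-\upsilon}(n)\big)\leq \exp\bigl(-c\log^{\kappa_2-2\upsilon-3}(n)\bigr)$. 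For $\kappa_2$ large enough in terms of $\upsilon$ this beats any inverse polynomial in $n$; the $\log^2(n)$ factor in the denominator of Proposition \ref{BersteinCorrelated} is absorbed by taking $\kappa_2$ slightly larger. A union bound over a net of cardinality $\log^{O(1)}(n)$, combined with the Lipschitz bound $\|\nabla p_t\|_{\LL^\infty}\lesssim t^{-3/2}$ from \eqref{Eq28}, then upgrades this to the uniform bound defining $\mathcal{A}_n$. The event $\mathcal{B}_{\delta,n}$ is handled identically: the resolvent representation $u^{n,t}_\delta=(I-\Delta)^{-1}\bigl[\tfrac{1}{\rho_\delta}(\mu^{n,t}-\rho_t)\bigr]$ expresses $\nabla^k u^{n,t}_\delta$ as an empirical average over smooth bounded kernels obtained by composing $(I-\Delta)^{-1}\nabla^k$ with $1/\rho_\delta$ and with $p_t(\cdot,X_i)$, so the Bernstein/net argument repeats verbatim.

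The main obstacle is the simultaneous tuning of $\delta$ and $t$. A smaller $\delta$ is desirable for the applications in the main theorems (it improves the regularization error in \eqref{LqControlIntro}), but it inflates the Schauder constants polynomially in $\log(n)$; likewise a larger $t$ improves concentration but weakens the heat-semigroup approximation of $\mu^n$. The logarithmic choices $\delta=\log^{-\kappa_1}(n)$ and $t=\log^{\kappa_2}(n)/n$ are tuned so that the polynomial-in-$\log$ loss in the Schauder step is strictly smaller than the $\log^{-\upsilon}(n)$ smallness granted by $\mathcal{A}_n\cap\mathcal{B}_{\delta,n}$, while the super-polynomial decay in $n$ required in \eqref{Eq13} is preserved; this dictates the relationship $\kappa_2=\kappa_2(\upsilon)$ in the statement.
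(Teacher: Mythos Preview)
Your approach is essentially the same as the paper's: a Schauder bootstrap with explicit tracking of the $\delta$-dependence for \eqref{Eq6}, and a Bernstein-plus-net argument for \eqref{Eq13}. Two points deserve correction.

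First, the net in your union bound does not have polylogarithmic cardinality. With Lipschitz constant $\|\nabla(\cdot)\|_{\LL^\infty}\lesssim t^{-3/2}$ and target accuracy $\log^{-\upsilon}(n)$, the mesh size must be of order $t^{3/2}\log^{-\upsilon}(n)$, and since $t=\log^{\kappa_2}(n)/n$ this produces a net of cardinality polynomial in $n$ (the paper obtains $\eta_n^{-2}\lesssim \log^{2\upsilon}(n)t^{-3}$). This slip is harmless for the conclusion, because your pointwise tail bound is of the form $\exp(-c\log^a(n))$ with $a>1$, which beats any polynomial and hence absorbs a polynomial union-bound factor; but the sentence as written is incorrect.

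Second, the assertion that the argument for $\mathcal{B}_{\delta,n}$ ``repeats verbatim'' hides a genuine technical step. Writing $u^{n,t}_\delta=\int_0^\infty e^{-s}\pp_s\big(\tfrac{1}{\rho_\delta}(\mu^{n,t}-\rho_t)\big)\dd s$, the kernel for $\partial^2_{ij}u^{n,t}_\delta$ has $\tfrac{1}{\rho_\delta}$ sandwiched between $\partial^2_{ij}p_s$ and $p_t$, and the naive bound $|\partial^2_{ij}p_s|\lesssim s^{-2}$ gives a divergent integral $\int_0^t s^{-2}\dd s$ for the short-time contribution. The paper resolves this by splitting $\int_0^\infty=\int_0^t+\int_t^\infty$ and, in the singular window $s\in(0,t)$, subtracting and adding $\tfrac{1}{\rho_\delta(x)}$ so as to trade one derivative on $p_s$ for the Lipschitz bound $|\tfrac{1}{\rho_\delta(y)}-\tfrac{1}{\rho_\delta(x)}|\lesssim \delta^{-1}\dd(x,y)$; this is what produces the estimates $\sup_y|\partial^2_{ij}\omega(x,y)|\lesssim t^{-1}$ and $\mathbb{E}[|\partial^2_{ij}\omega(x,X_1)|^2]\lesssim \delta^{-4}t^{-1}$ feeding into Bernstein. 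Your sketch does not account for this, and without it the required $M$ and $v^2$ bounds are not available.
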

\begin{proof}
The proof of Proposition \ref{Fluctuation} is split into two steps. In the first step, we prove \eqref{Eq6} where our main tool is Schauder's theory and an explicit formula for $u^{n,t}_{\delta}$ defined in \eqref{Eq3}. In the second step, we show \eqref{Eq13}, where our main tool is the concentration inequalities in Proposition \ref{BersteinCorrelated} and the explicit formula for $u^{n,t}_{\delta}$ used in the first step.

\medskip

{\sc Step 1. Proof of \eqref{Eq6}. }We define the difference $v^{n,t}_{\delta}:=f^{n,t}_\delta-\Big(u^{n,t}_{\delta}-\int_{\mathcal{M}} u^{n,t}_\delta\Big)\in \dot{\text{H}}^1$ and note that from \eqref{Eq3} and \eqref{Eq3Bis}, it solves
\begin{equation}\label{Eq5}
 -\Delta v^{n,t}_{\delta}=\frac{1}{\rho_{\delta}}\nabla\rho_\delta\cdot\nabla f^{n,t}_\delta-\Big(u^{n,t}_{\delta}-\int_{\mathcal{M}}u^{n,t}_\delta\Big).
\end{equation}
We claim that there exists $\kappa>0$ such that with the choice of $\delta$ in \eqref{Eq:ChoicesPara}, we have
\begin{equation}\label{Eq24}
\mathds{1}_{\mathcal{A}_n\cap\mathcal{B}_{\delta,n}}\big\|(\nabla v^{n,t}_{\delta},\nabla^2 v^{n,t}_{\delta})\big\|_{\LL^{\infty}}\lesssim \frac{1}{\log^{\upsilon-(\kappa+2)\kappa_1}(n)}.
\end{equation}
The estimate \eqref{Eq6} then follows from \eqref{Eq24} and the triangle inequality. 

\medskip

We now prove \eqref{Eq24}. We apply Schauder's estimate (see for instance \cite{gilbarg2015elliptic} and \cite[Chapter 10]{nicolaescu2020lectures}) to \eqref{Eq5} to obtain
\begin{equation}\label{Eq11}
\big\|(\nabla v^{n,t}_{\delta},\nabla^2 v^{n,t}_{\delta})\big\|_{\LL^{\infty}}\lesssim \|\tfrac{1}{\rho_{\delta}}\nabla\rho_\delta\cdot\nabla f^{n,t}_\delta\|_{\cc^{0,\alpha}}+\Big\|u^{n,t}_{\delta}-\int_{\mathcal{M}}u^{n,t}_\delta\Big\|_{\cc^{0,\alpha}}.
\end{equation}
While the second \rhs term is directly of order of $\frac{1}{\log^{\upsilon}(n)}$ in $\mathcal{B}_{\delta,n}$, the first \rhs term requires additional attention. Using \eqref{Ellipticity} and \eqref{Eq28}, $\rho_\delta$ satisfies
\begin{equation}\label{Eq25}
\rho_{\delta}\geq \lambda\quad \text{and}\quad \delta^{-1}\|\nabla\rho_{\delta}\|_{\LL^{\infty}}+\|\nabla^2\rho_{\delta}\|_{\LL^{\infty}}\lesssim \delta^{-2},
\end{equation}
and together with the algebraic property of $\|\cdot\|_{\cc^{0,\alpha}}$, we have
$$\|\tfrac{1}{\rho_{\delta}}\nabla\rho_\delta\cdot\nabla f^{n,t}_\delta\|_{\cc^{0,\alpha}}\leq \|\tfrac{1}{\rho_{\delta}}\nabla\rho_\delta\|_{\cc^{0,\alpha}}\|\nabla f^{n,t}_\delta\|_{\cc^{0,\alpha}}\lesssim \delta^{-2}\|\nabla f^{n,t}_\delta\|_{\cc^{0,\alpha}}.$$
The latter is bounded using Schauder's estimate applied this time on \eqref{Eq2} (knowing that the dependence on $\|\rho_{\delta}\|_{\cc^{0,\alpha}}$ is at most polynomial): there exists $\kappa>0$ such that
\begin{equation*}
\begin{aligned}
\|\nabla f^{n,t}_\delta\|_{\cc^{0,\alpha}}&\lesssim \|\rho_{\delta}\|^{\kappa}_{\cc^{0,\alpha}}\|\mu^{n,t}-\rho_t\|_{\LL^{\infty}}\stackrel{\eqref{Eq25}}{\lesssim} \delta^{-\kappa}\|\mu^{n,t}-\rho_t\|_{\LL^{\infty}}\stackrel{\eqref{Eq:ChoicesPara}}{\lesssim} \log^{\kappa\kappa_1}(n)\|\mu^{n,t}-\rho_t\|_{\LL^{\infty}},
\end{aligned}
\end{equation*}
which yields the following control of the first \rhs term of \eqref{Eq11}
\begin{equation}\label{Eq7}
\|\tfrac{1}{\rho_{\delta}}\nabla\rho_\delta\cdot\nabla f^{n,t}_\delta\|_{\cc^{0,\alpha}}\lesssim \log^{(\kappa+2)\kappa_1}(n)\|\mu^{n,t}-\rho_t\|_{\LL^{\infty}},
\end{equation}
which is of order of $\frac{1}{\log^{\upsilon-(\kappa+2)\kappa_1}(n)}$ in $\mathcal{A}_n$.

\medskip

{\sc Step 2. Proof of \eqref{Eq13}. }We provide the arguments for \eqref{Eq13} in case that \eqref{DecayAlphaMixing} holds for $\eta<1$ and the event $\mathcal{B}^c_{\delta,n}$ that we reduce to $\{\|\nabla^2 u^{n,t}_\delta\|_{\LL^{\infty}}\leq \tfrac{1}{\log^{\upsilon}(n)}\}$, the other cases as well as the case of $\nabla u^{n,t}_\delta$ follow by a straightforward adaptation. The estimate \eqref{Eq13} is a consequence of 
\begin{equation}\label{Eq59}
\begin{aligned}
\sup_{x\in\mathcal{M}}\mathbb{P}\Big(\vert\partial^2_{ij} u^{n,t}_{\delta}(x)\vert\geq& \tfrac{1}{2\log^{\upsilon}(n)}\Big)\leq n\exp\bigg(-\frac{1}{C_1}\Big(\frac{nt}{\log^{\upsilon}(n)}\Big)^{\eta}\bigg)+\exp\bigg(-\frac{1}{C_2}\frac{n^2t\log^{-2\upsilon}(n)}{t^{-1}+n\delta^{-4}\log^{\frac{1}{\eta}}(n)}\bigg)\\
&+\exp\bigg(-\frac{1}{C_3}\frac{nt^2}{\log^{\upsilon}(n)}\exp\Big(\frac{1}{C_4}\Big(\frac{nt}{\log^{\upsilon}(n)}\Big)^{\eta(1-\eta)}\log^{-1}(\tfrac{nt}{\log^{\upsilon}(n)})\Big)\bigg).
\end{aligned}
\end{equation}
for some constants $C_1,C_2,C_3,C_4>0$. To see this, let $\eta_n$ be defined by
\begin{equation}\label{Eq72}
\eta_n=\frac{1}{2\log^{\upsilon}(n)\|\nabla\partial^2_{ij}u^{n,t}_{\delta}\|_{\LL^{\infty}}}.
\end{equation}
By compactness of $\mathcal{M}$, we can find a $\eta_n$-net $\{x_k\}_{1\leq k\leq N}\subset \mathcal{M}$ with $N\lesssim \eta^{-2}_n$. We note that
\begin{equation}\label{Eq77}
\Big\{\|\partial^2_{ij}u^{n,t}_{\delta}\|_{\LL^{\infty}}>\tfrac{1}{\log^{\upsilon}(n)}\Big\}\subset\bigcup_{k=1}^{N}\Big\{\vert\partial^2_{ij}u^{n,t}_{\delta}(x_k)\vert>\tfrac{1}{2\log^{\upsilon}(n)}\Big\}.
\end{equation}
Indeed, if for any $k\in\{1,\cdots,N\}$, $\vert\partial^2_{ij}u^{n,t}_{\delta}(x_k)\vert\leq\tfrac{1}{2\log^{\upsilon}(n)}$ then for any $x\in \mathcal{M}$ there exists $j\in\{1,\cdots,N\}$ such that
$$\vert\partial^2_{ij}u^{n,t}_{\delta}(x)\vert\leq \eta_n\|\nabla\partial^2_{ij}u^{n,t}_{\delta}\|_{\LL^{\infty}}+\vert\partial^2_{ij}u^{n,t}_{\delta}(x_j)\vert\stackrel{\eqref{Eq72}}{\lesssim} \frac{1}{\log^{\upsilon}(n)}.$$
Applying $\mathbb{P}$ on \eqref{Eq77} yields 
\begin{align*}
\mathbb{P}\Big(\|\partial^2_{ij} u^{n,t}_{\delta}\|_{\LL^{\infty}}\geq \tfrac{1}{\log^{\upsilon}(n)}\Big)&\leq \sum_{k=1}^N\mathbb{P}\Big(\vert\partial^2_{ij} u^{n,t}_{\delta}(x_k)\vert\geq \tfrac{1}{2\log^{\upsilon}(n)}\Big)\\
&\lesssim \eta^{-2}_{n}\sup_{x\in\mathcal{M}}\mathbb{P}\Big(\vert\partial^2_{ij} u^{n,t}_{\delta}(x)\vert\geq \tfrac{1}{2\log^{\upsilon}(n)}\Big).
\end{align*}
Using \eqref{Eq59} and 
$$\eta^{-2}_n\lesssim \log^{2\upsilon}(n)t^{-3},$$
which can be proven following the arguments leading to the second item of \eqref{Eq58} below, this yields $\P(\{\|\nabla^2 u^{n,t}_\delta\|_{\LL^{\infty}}\leq \tfrac{1}{\log^{\upsilon}(n)}\})=o(\tfrac{1}{n^m})$ for any $m\in\mathbb{N}$ for the choice $\kappa_2>4\kappa_1+\frac{1}{\eta}+2\upsilon$.
\medskip

We now prove \eqref{Eq59}. We first exploit the following explicit representation formula\footnote{a simple change of variable gives that the kernel $\tilde{p}_s$ associated to $1-\Delta$ is given by $\tilde{p}_s=e^{-s}p_s$} for $u^{n,t}_{\delta}$, 
\begin{equation}\label{eq:stella3}
u^{n,t}_{\delta}=\int_0^{\infty}e^{-s}\,\text{P}_s\Big(\tfrac{1}{\rho_\delta}(\mu^{n,t}-\rho_t)\Big)\,\dd s,
\end{equation}
that we expand, using the definition \eqref{eq:empmeas} of $\mu^{n}$, in form of
\begin{equation}\label{eq:stella}
u^{n,t}_{\delta}=\frac{1}{n}\sum_{k=1}^n\omega(\cdot,X_k)\quad\text{with } \omega(\cdot,y):=\int_{0}^{\infty}e^{-s}\,\text{P}_s\Big(\tfrac{1}{\rho_{\delta}}(p_t(\cdot,y)-\rho_t)\Big)\,\dd s.
\end{equation}
Then applying the concentration inequalities in Proposition \ref{BersteinCorrelated}, we get 
\begin{equation}\label{eq:stella2}
\begin{split}
\mathbb{P}\Big(\vert\partial^2_{ij} u^{n,t}_{\delta}(x)\vert &\geq \tfrac{1}{\log^{\upsilon}(n)}\Big)\leq n\exp\bigg(-\frac{1}{C_1}\Big(\frac{n}{M\log^{2}(n)}\Big)^{\beta}\bigg)+\exp\bigg(-\frac{1}{C_2}\frac{n^2\log^{-2\upsilon}(n)}{M^2+n v^2}\bigg)\\
&+\exp\bigg(-\frac{1}{C_3}\frac{n\lambda}{M^2\log^{\upsilon}(n)}\exp\Big(\frac{1}{C_4}\Big(\frac{n}{M\log^{\upsilon}(n)}\Big)^{\beta(1-\beta)}\log^{-1}(\tfrac{n}{M\log^{\upsilon}(n)})\Big)\bigg),
\end{split}
\end{equation}
where 
\begin{equation}\label{Eq79}
M:=\sup_{y\in\mathcal{M}}\vert\partial^2_{ij}\omega(x,y)\vert\quad\text{and}\quad v^2:=\mathbb{E}[\vert\partial^2_{ij}\omega(x,X_1)\vert^2]+2\sup_{\ell\geq 1}\sum_{k>\ell}\vert\mathbb{E}[\partial^2_{ij}\omega(x,X_\ell)\partial^2_{ij}\omega(x,X_k)]\vert.
\end{equation}
The estimate \eqref{Eq59} then follows from the three following estimates
\begin{equation}\label{Eq58}
\mathbb{E}[\vert\partial^2_{ij} \omega(x,X_1)\vert^2]\lesssim \delta^{-4}t^{-1},\quad \sup_{y\in\mathcal{M}}\vert\partial^2_{ij} \omega(x,y)\vert\lesssim t^{-1}\quad\text{and}\quad v^2\lesssim \log^{\frac{1}{\eta}}(n)\delta^{-4}t^{-1} ,
\end{equation}
that we prove separately in the next three sub-steps.

\medskip

{\sc Sub-step 2.1. Proof of the first item of \eqref{Eq58}. }Splitting the time integral into $\int_{0}^t+\int_{t}^\infty$ and subtracting and adding back $\tfrac{1}{\rho_\delta(x)}$ in the first integral as well as using the semigroup property of $\{\pp_s\}_{s}$ in form of 
$$\int_0^t\dd s\, e^{-s}\,\pp_s(p_t(\cdot,y)-\rho_t)=\int_{t}^{2t}\dd s\, e^{-(s-t)}(p_s(\cdot,y)-\rho_s),$$
we decompose $\omega$ into a regular-part $\mathcal{J}_1$ and a singular-part $\mathcal{J}_2$: 
\begin{equation}\label{Eq68}
\begin{aligned}
\omega(x,y)=&\underbrace{\frac{1}{\rho_{\delta}(x)}\int_t^{2t} \dd s\, e^{-s}(p_s(\cdot,y)-\rho_{s})(x)
+\int_{t}^{\infty}\dd s\, e^{-s}\text{P}_s\Big(\tfrac{1}{\rho_\delta}(p_t(\cdot,y)-\rho_t)\Big)(x)}_{=:\mathcal{J}_1(x,y)}\\
&+\underbrace{\int_{0}^{t}\dd s\, e^{-s}\text{P}_s\Big((\tfrac{1}{\rho_\delta}-\tfrac{1}{\rho_\delta(x)})(p_t(\cdot,y)-\rho_t)\Big)(x)}_{=:\mathcal{J}_2(x,y)}.
\end{aligned}
\end{equation}
For the regular-part $\mathcal{J}_1$, we apply directly the heat-kernel bounds \eqref{Eq28} and the first item of \eqref{Eq25}, to obtain from the triangle inequality and Minkowski's inequality
\begin{equation}\label{Eq61}
\begin{aligned}
\mathbb{E}[\vert\partial^2_{ij}\mathcal{J}_1(x,X_1)\vert^2]\lesssim& \|\partial^2_{ij}\mathcal{J}_1(x,\cdot)\|^2_{\LL^{2}}\\
\lesssim &(\|\nabla\rho_{\delta}\|^4_{\LL^{\infty}}+\|\nabla^2\rho_{\delta}\|^2_{\LL^{\infty}})\Big(\int_{t}^{2t}\|p_s(x,\cdot)\|_{\LL^{2}}\,\dd s\Big)^{2}\\
&\underbrace{+\|\nabla\rho_{\delta}\|^2_{\LL^{\infty}}\Big(\int_{t}^{2t}\|\nabla p_s(x,\cdot)\|_{\LL^{2}}\,\dd s\Big)^{2}+\Big(\int_{t}^{2t}\|\nabla^2 p_s(x,\cdot)\|_{\LL^{2}}\,\dd s\Big)^{2}}_{=:\mathcal{R}_1(x)}\\
&+\underbrace{\bigg(\int_{t}^{\infty}\dd s\bigg(\int_{\mathcal{M}}\dd \m(y)\Big(\partial^2_{ij}\text{P}_s\Big(\tfrac{1}{\rho_{\delta}}(p_t(\cdot,y)-\rho_t)\Big)(x)\Big)^2\bigg)^{\frac{1}{2}}\bigg)^2}_{=:\mathcal{R}_2(x)}.
\end{aligned}
\end{equation}
The first \rhs term $\mathcal{R}_1(x)$ is dominated using directly the heat-kernel bounds \eqref{Eq28} and \eqref{Eq25}
\begin{equation}\label{Eq62}
\mathcal{R}_1(x)\lesssim \delta^{-4}t^{\frac{1}{2}}+\delta^{-2}+t^{-1}\lesssim t^{-1}.
\end{equation}
For the second \rhs side term $\mathcal{R}_2(x)$, we first simplify the $y$-integral. Using that 
\begin{equation}\label{Eq63}
\int_{\mathcal{M}}\vert p_t(z,w)-\rho_t(z)\vert\,\dd \m(w)\lesssim 1\quad\text{and}\quad\int_{\mathcal{M}}\vert p_t(w,y)-\rho_t(w)\vert\,\dd \m(w)\lesssim 1\quad \text{for any $z,y\in \mathcal{M}$,}
\end{equation}
we have by Jensen's inequality and the heat-kernel bounds \eqref{Eq28}
\begin{align*}
\lefteqn{\int_{\mathcal{M}}\Big(\partial^2_{ij}\text{P}_s\Big(\tfrac{1}{\rho_{\delta}}(p_t(\cdot,y)-\rho_t)\Big)(x)\Big)^2\,\dd \m(y)}\\ &=\int_{\mathcal{M}}\bigg(\int_{\mathcal{M}}\dd w\,\partial^2_{ij}p_{s}(x,w)\tfrac{1}{\rho_{\delta}(w)}(p_t(w,y)-\rho_t(w))\bigg)^{2}\,\dd \m(y)\\
&\stackrel{\eqref{Eq63}}{\lesssim}\int_{\mathcal{M}}\vert\partial^2_{ij}p_s(x,w)\vert^2\,\dd \m(w)\lesssim s^{-3}.
\end{align*}
Thus, 
\begin{equation}\label{Eq64}
\mathcal{R}_2(x)\lesssim t^{-1}.
\end{equation}
The combination of \eqref{Eq61}, \eqref{Eq62} and \eqref{Eq63} yields
\begin{equation}\label{Eq65}
\mathbb{E}[\vert\partial^2_{ij}\mathcal{J}_1(x,X_1)\vert^2]\lesssim t^{-1}.
\end{equation}
We now turn to the singular-part $\mathcal{J}_2$. We first apply Minkowski's inequality in form of
\begin{equation}\label{Eq66}
\begin{split}
\mathbb{E}[\vert\mathcal{J}_2(x,X_1)\vert^2]&\lesssim \|\partial^2_{ij}\mathcal{J}_2(x,\cdot)\|^2_{\LL^{2}}\\& \lesssim \bigg(\int_{0}^t\dd s\bigg(\int_{\mathcal{M}}\dd \m(y)\Big(\partial^2_{ij}\text{P}_s\Big((\tfrac{1}{\rho_{\delta}}-\tfrac{1}{\rho_{\delta}(x)})(p_t(\cdot,y)-\rho_t)\Big)(x)\Big)^2\bigg)^{\frac{1}{2}}\bigg)^{2}.
\end{split}
\end{equation}
We then simplify the $y$-integral. To this aim, we bound the integrand in $\LL^{\infty}$ using the heat-kernel bounds \eqref{Eq28}, \eqref{Eq25} and 
\begin{equation}\label{Eq29}
\Big\vert \tfrac{1}{\rho_\delta(y)}-\tfrac{1}{\rho_\delta(x)}\Big\vert\stackrel{\eqref{Eq25}}{\lesssim} \delta^{-1}\dd(x,y)\quad \text{for any $x, y\in\mathcal{M}$},
\end{equation}
in form of
\begin{equation}\label{Eq69}
\begin{aligned}
& \Big\vert\partial^2_{ij}\text{P}_s\Big((\tfrac{1}{\rho_{\delta}}-\tfrac{1}{\rho_{\delta}(x)})(p_t(\cdot,y)-\rho_t)\Big)(x)\Big\vert\lesssim t^{-1}\int_{\mathcal{M}}\bigg\vert \partial^2_{ij}\Big(p_s(x,w)(\tfrac{1}{\rho_{\delta}(w)}-\tfrac{1}{\rho_{\delta}(x)})\Big)\bigg\vert\,\dd \m(w)\\
\stackrel{\eqref{Eq25},\eqref{Eq29}}{\lesssim}&t^{-1}\delta^{-1}\bigg(\int_{\mathcal{M}}\vert\nabla^2 p_s(x,w)\vert \dd(x,w)\,\dd \m(w)+\int_{\mathcal{M}}\vert\nabla p_s(x,w)\vert\,\dd \m(w)\bigg)\\
& +t^{-1}\delta^{-2}\int_{\mathcal{M}}\vert p_s(x,w)\vert\,\dd \m(w)\\
\stackrel{\eqref{Eq28}}{\lesssim} &t^{-1}\delta^{-1}s^{-\frac{1}{2}}+t^{-1}\delta^{-2}.
\end{aligned}
\end{equation}
This yields together with \eqref{Eq66}
\begin{equation}\label{Eq67}
\mathbb{E}[\vert\mathcal{J}_2(x,X_1)\vert^2]\lesssim \delta^{-4}t^{-1}.
\end{equation}
To conclude, the combination of \eqref{Eq68}, \eqref{Eq65} and \eqref{Eq67} shows the first item of \eqref{Eq58}.

\medskip

{\sc Sub-step 2.2. Proof of the second item of \eqref{Eq58}. }We use the decomposition \eqref{Eq68}. For the regular-part $\mathcal{J}_1$, we argue as in \eqref{Eq61} for the first term whereas the second-term is estimated using the heat-kernel bounds \eqref{Eq28} in form of
\begin{align*}
\bigg\vert\partial^2_{ij}\text{P}_s\Big(\tfrac{1}{\rho_\delta}(p_t(\cdot,y)-\rho_t)\Big)(x)\bigg\vert&=\bigg\vert\int_{\mathcal{M}}\dd \m(w)\,\partial^2_{ij}p_{s}(x,w)\tfrac{1}{\rho_{\delta}(w)}(p_t(w,y)-\rho_t(w))\bigg\vert\\
&\stackrel{\eqref{Eq28},\eqref{Eq25}}{\lesssim} s^{-2}\int_{\mathcal{M}}\dd \m(w)\, \vert p_t(w,y)-\rho_t(w)\vert\\
&\lesssim s^{-2},
\end{align*}
so that
\begin{align*}
\bigg\vert\int_{t}^{\infty}\dd s\, e^{-s}\partial^2_{ij}\text{P}_s\Big(\tfrac{1}{\rho_\delta}(p_t(\cdot,y)-\rho_t)\Big)(x)\bigg\vert\lesssim t^{-1}.
\end{align*}
Hence,
\begin{equation}\label{Eq71}
\sup_{y\in\mathcal{M}}\vert\mathcal{J}_1(x,y)\vert\lesssim t^{-1}.
\end{equation}
\medskip

For the singular-part $\mathcal{J}_2$, we use the bound \eqref{Eq69} which directly yields
\begin{equation}\label{Eq70}
\sup_{y\in\mathcal{M}}\vert\mathcal{J}_2(x,y)\vert\lesssim \delta^{-1}t^{-\frac{1}{2}}.
\end{equation}
The combination of \eqref{Eq68}, \eqref{Eq71} and \eqref{Eq70} gives the second item of \eqref{Eq58}.

\medskip

{\sc Sub-Step 2.3. Proof of the third item of \eqref{Eq58}. }According to the first item of \eqref{Eq58}, it suffices to give the argument for the second term in the definition \eqref{Eq79} of $v^2$. We use the assumption \eqref{DecayAlphaMixing} together with the two first items of \eqref{Eq58} in form of
\begin{align*}
\lefteqn{\sum_{k>\ell}\vert\mathbb{E}[\partial^2_{ij}\omega(x,X_i)\partial^2_{ij}\omega(x,X_j)]\vert}\\ &=\sum_{k>\ell}\vert\mathbb{E}[\partial^2_{ij}\omega(x,X_\ell)\partial^2_{ij}\omega(x,X_k)]\vert^{\frac{1}{\log(n)}}\vert\mathbb{E}[\partial^2_{ij}\omega(x,X_\ell)\partial^2_{ij}\omega(x,X_k)]\vert^{1-\frac{1}{\log(n)}}\\
&\lesssim \sum_{k>\ell}\alpha^{\frac{1}{\log(n)}}_{k-\ell}(\sup_{y\in\mathcal{M}}\vert \partial^2_{ij}\omega(x,y)\vert)^{\frac{2}{\log(n)}}(\mathbb{E}[\vert\partial^2_{ij}\omega(x,X_1)\vert^2])^{1-\frac{1}{\log(n)}}\\
&\stackrel{\eqref{Eq58},\eqref{DecayAlphaMixing}}{\lesssim} \delta^{-4(1-\frac{1}{\log(n)})} t^{-1-\frac{1}{\log(n)}}\sum_{k=0}^{\infty}\exp(-b\tfrac{k^{\eta}}{\log(n)})\\
&\lesssim \delta^{\frac{4}{\log(n)}}t^{-\frac{1}{\log(n)}}\log^{\frac{1}{\eta}}(n)\delta^{-4}t^{-1},
\end{align*}
which concludes since $\limsup_{n\uparrow\infty}\delta^{\frac{4}{\log(n)}}t^{-\frac{1}{\log(n)}}\lesssim 1$. 
\end{proof}
\subsection{Contractivity estimates}
This section is devoted to the control of the smoothing errors $W^2_2(\mu^{n,t},\mu^{n})$ and $W^2_2(\nu^{m,t},\nu^{m})$ for the particular choice of $t$ given in Proposition \ref{Fluctuation}. The first result is in the spirit of \cite[Theorem 5.2]{ambrosio2019finer} that we extend in the case of non-uniformly distributed and correlated points. This extension requires a finer analysis of the error and the proof relies on Berry-Esseen type inequalities in the spirit of \cite[Theorem 5]{borda2021empirical}.
\begin{proposition}[Semigroup contraction for empirical measures]\label{Contractivity}
Let $\{\mu^n\}_n$ be defined in \eqref{eq:empmeas} with point clouds satisfying Assumption \ref{Assumptions}. Given $t$ such that Proposition \ref{Fluctuation} holds, we have
\begin{equation}\label{ContractivityEstiExpectation}
W_2^2(\mu^{n,t}, \mu^{n}) \le \mathcal{C}_n \frac{\log\log(n)}{n}+t\big\|\rho_{t+\frac{1}{n}}-\rho_{\frac{1}{n}}\big\|_{\LL^1},
\end{equation}
for some random variable $\mathcal{C}_n$ satisfying for $C<\infty$
$$\sup_{n \ge 1} \mathbb{E} [\tfrac1{C}\mathcal{C}_n]\leq 1.$$
Furthermore, if \eqref{DecayAlphaMixing} holds with $\eta\geq 1$ then the assumption \eqref{DecatBetaMixing} can be dropped and the stochastic integrability can be improved up to losing a $\log(n)$ factor, namely
\begin{equation}\label{ContractivityEstiExpMoment}
W_2^2(\mu^{n,t}, \mu^{n}) \le \mathcal{D}_n \frac{\log^{\frac{1}{\eta}}(n)\log\log(n)}{n}+t\big\|\rho_{t+\frac{1}{n}}-\rho_{\frac{1}{n}}\big\|_{\LL^1},
\end{equation}
for some random variable $\mathcal{D}_n$ satisfying for $D<\infty$
$$\sup_{n \ge 1} \mathbb{E} \big[\exp(\tfrac1{D}\mathcal{D}^{\frac{1}{2}}_n)\big]\leq 2.$$
\end{proposition}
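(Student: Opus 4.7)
The proof extends the strategy of \cite[Theorem 5.2]{ambrosio2019finer} to the present non-constant density and correlated sample setting, with the correlation corrections absorbed through a Berry--Esseen/Bernstein framework in the spirit of \cite[Theorem 5]{borda2021empirical}. The starting point is a triangle decomposition
$$W_2^2(\mu^n,\mu^{n,t}) \lesssim W_2^2(\mu^n,\mu^{n,1/n}) + W_2^2(\mu^{n,1/n},\mu^{n,t+1/n}) + W_2^2(\mu^{n,t+1/n},\mu^{n,t}).$$
The two outer terms only involve a short-time heat evolution over $[0,1/n]$: by convexity of $W_2^2$ and $W_2^2(\delta_x,\pp_s\delta_x)\lesssim s$, each is bounded by $O(1/n)$, which sits comfortably inside the target $\log\log(n)/n$.

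On the favorable event $\mathcal{A}_n$ of Proposition \ref{Fluctuation}, $\mu^{n,s}$ is uniformly close to $\rho_s$ for $s\in[1/n,t+1/n]$, hence pinched between positive multiples of $\lambda,\Lambda$. This validates a Peyre--Loeper-type control
$$W_2^2(\mu^{n,1/n},\mu^{n,t+1/n}) \lesssim \int_\mathcal{M}|\nabla\psi|^2\,\dd\m$$
with $-\Delta\psi=\mu^{n,1/n}-\mu^{n,t+1/n}$; using $\partial_s\pp_s=\Delta\pp_s$, this gives $\psi=\int_{1/n}^{t+1/n}\mu^{n,s}\,\dd s$ up to an additive constant. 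Expanding $\mu^n$ on the eigenbasis $\{\phi_k\}$ of $-\Delta$ yields the spectral representation
$$\int_\mathcal{M}|\nabla\psi|^2\,\dd\m=\sum_{k\ge1}\lambda_k^{-1}\bigl(e^{-\lambda_k/n}-e^{-\lambda_k(t+1/n)}\bigr)^2|\hat\mu^n(k)|^2.$$

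Splitting $|\hat\mu^n(k)|^2=|\hat\rho(k)|^2+(|\hat\mu^n(k)|^2-|\hat\rho(k)|^2)$ separates a deterministic from a fluctuation contribution. The deterministic piece equals $\int|\nabla\tilde\psi|^2\,\dd\m$ with $-\Delta\tilde\psi=\rho_{1/n}-\rho_{t+1/n}$, and is bounded by $t\|\rho_{t+1/n}-\rho_{1/n}\|_{\LL^1}$ as in Lemma \ref{lem:HeatContractionrho}. For the fluctuation piece, Proposition \ref{BersteinCorrelated} applied mode-by-mode delivers $\mathbb{E}|\hat\mu^n(k)-\hat\rho(k)|^2\lesssim 1/n$ (with a $\log^{1/\eta}(n)$ loss under the stronger mixing). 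Factoring $(e^{-\lambda_k/n}-e^{-\lambda_k(t+1/n)})^2=e^{-2\lambda_k/n}(1-e^{-t\lambda_k})^2$ and exploiting the effective cutoff at $\lambda_k\sim n$ provided by $e^{-2\lambda_k/n}$ together with Weyl's asymptotic $\lambda_k\sim k$ in dimension $2$ gives
$$\sum_{k\ge1}\lambda_k^{-1}e^{-2\lambda_k/n}(1-e^{-t\lambda_k})^2\lesssim \log(tn);$$
for the choice $t=\log^{\kappa_2}(n)/n$ of Proposition \ref{Fluctuation}, this is $\lesssim\log\log(n)$, yielding the target $\log\log(n)/n$ rate in expectation.

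The pathwise constant $\mathcal{C}_n$ is extracted by applying the tail estimates of Proposition \ref{BersteinCorrelated} to each Fourier mode and combining with a layer-cake argument over the Weyl cutoff. On the bad event $\mathcal{A}_n^c$, which has probability $o(n^{-\ell})$ for every $\ell\in\mathbb{N}$, the trivial bound $W_2^2\le\mathrm{diam}(\mathcal{M})^2$ is negligible against the target. The improved stochastic integrability \eqref{ContractivityEstiExpMoment} under $\eta\ge1$ follows from the subexponential Bernstein tails in Proposition \ref{BersteinCorrelated}, which control exponential moments of $\mathcal{D}_n^{1/2}$ at the cost of the $\log^{1/\eta}(n)$ factor. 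The main obstacle is the spectral bookkeeping required to pin down the sharp $\log\log(n)$ scale while simultaneously transporting the $\alpha$-mixing decay through each mode via the Berry--Esseen heuristic.
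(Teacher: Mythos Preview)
Your proposal is correct and follows essentially the same route as the paper: the same triangle decomposition with the $1/n$ time shift, restriction to $\mathcal{A}_n$, Peyre-type bound reducing to a spectral sum, and the same $\log\log(n)$ estimate on that sum via heat-kernel/Weyl asymptotics. The only organizational difference is that the paper splits $\mu^{n,t+1/n}-\mu^{n,1/n}$ into $(\mu^{n}-\rho)$- and $\rho$-parts \emph{before} passing to Fourier (yielding $|\widehat{\mu^n}(k)-\widehat{\rho}(k)|^2$ directly, which is cleaner for the higher-moment step), and note that $\mathcal{A}_n$ only pins down $\mu^{n,t+1/n}\gtrsim\lambda$, not $\mu^{n,s}$ for all $s\in[1/n,t+1/n]$---but the single endpoint is all the Peyre bound needs.
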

 \begin{proof}
 According to the fluctuation estimates in Proposition \ref{Fluctuation} together with $W^2_2(\mu^{n,t},\mu^n)\leq (\text{diam}(\mathcal{M}))^2$, we can restrict the analysis in $\mathcal{A}_n$ defined in \eqref{Events}. Note that for $n$ large enough, \eqref{Ellipticity} yields
\begin{equation}\label{EllipticityAn}
\frac{\lambda}{2}\leq \mu^{n,t}\leq \Lambda+1\quad\text{in $\mathcal{A}_n$.}
\end{equation}
\medskip

We split the proof into three steps. In the first step, we prove a Berry–Esseen type smoothing inequality for $W^2_2(\mu^{n,t},\mu^n)$ which decomposes the error in a deterministic part involving $\rho$ and a random part involving the Fourier coefficients $\{\widehat{\mu^n}(k)\}_k$ of $\mu^n$. In the second step, we prove \eqref{ContractivityEstiExpectation}. In the third step, we control the fluctuations of $\{\widehat{\mu^n}(k)\}_k$ using the concentration inequalities in Proposition \ref{BersteinCorrelated} and deduce \eqref{ContractivityEstiExpMoment}.

\medskip

{\sc Step 1. Berry-Esseen type inequality. }Recalling that we denote by $\{\lambda_k,\phi_k\}_k$ the eigenvalues and eigenfunctions of $-\Delta$ respectively, we prove that 
\begin{equation}\label{SmoothingIneg}
W^2_2(\mu^{n,t},\mu^n)\lesssim \frac{1}{n}+\sum_{k\geq 1} \frac{e^{-\frac{2}{n}\lambda_k}}{\lambda_k}\big(e^{-t\lambda_k}-1\big)^2\vert\widehat{\mu^n}(k)-\widehat{\rho}(k)\vert^2+t\|\rho_{t+\frac{1}{n}}-\rho_{\frac{1}{n}}\|_{\LL^1},
\end{equation}
where 
$$\widehat{\mu^n}(k):=\int_{\mathcal{M}}\phi_k\,\dd\mu^n \quad \text{and}\quad \widehat{\rho}(k):=\int_{\mathcal{M}}\rho\,\phi_k\,\dd\m.$$
We first apply the triangle inequality and use the classical contractivity estimate in \cite[Theorem 3]{erbar2015equivalence} to get
\begin{equation}\label{EstiPeyre:Eq4}
W^2_2(\mu^{n,t},\mu^n)\lesssim W^2_2(\mu^{n,\frac{1}{n}},\mu^{n})+W^2_2(\mu^{n,t+\frac{1}{n}},\mu^{n,t})+W^2_2(\mu^{n,t+\frac{1}{n}},\mu^{n,\frac{1}{n}})\lesssim \frac{1}{n}+W^2_2(\mu^{n,t+\frac{1}{n}},\mu^{n,\frac{1}{n}}).
\end{equation}
We then apply Peyre's estimate \cite{peyre2018comparison} to the second \rhs, which takes the form
\begin{equation}\label{PeyreLemma}
W^2_2(\mu^{n,t+\frac{1}{n}},\mu^{n,\frac{1}{n}})\leq 4\sup\bigg\{\Big\vert\int_{\mathcal{M}}(\mu^{n,t+\frac{1}{n}}-\mu^{n,\frac{1}{n}})\,f\,\dd\m\Big\vert^2\quad\text{with}\quad \int_{\mathcal{M}}\mu^{n,t+\frac{1}{n}}\,\vert\nabla f\vert^2\,\dd\m\leq 1\bigg\}.
\end{equation}
Now, given an arbitrary $f$ such that 
\begin{equation}\label{Boundtestfunctionf}
\int_{\mathcal{M}}\mu^{n,t+\frac{1}{n}}\,\vert\nabla f\vert^2\,\dd\m\leq 1,
\end{equation}
we split
\begin{equation}\label{EstiPeyre:Eq1}
\int_{\mathcal{M}}(\mu^{n,t+\frac{1}{n}}-\mu^{n,\frac{1}{n}})\, f\,\dd\m=\int_{\mathcal{M}}\big(\mu^{n,t+\frac{1}{n}}-\mu^{n,\frac{1}{n}}-(\rho_{t+\frac{1}{n}}-\rho_{\frac{1}{n}})\big)\, f\,\dd\m+\int_{\mathcal{M}}f(\rho_{t+\frac{1}{n}}-\rho_{\frac{1}{n}})\,\dd\m.
\end{equation}
For the first \rhs term of \eqref{EstiPeyre:Eq1}, we expand the integral using \eqref{SpectralDecompoHeatKernel}. Thus, together with the semigroup property of $\{\mathrm{P}_t\}_{t>0}$ and Cauchy-Schwarz's inequality, we get
\begin{align*}
&\int_{\mathcal{M}}\big(\mu^{n,t+\frac{1}{n}}-\mu^{n,\frac{1}{n}}-(\rho_{t+\frac{1}{n}}-\rho_{\frac{1}{n}})\big)\, f\,\dd\m\\
&=\int_{\mathcal{M}}\dd\big(\mu^{n,t}-\mu^{n}-(\rho_{t}-\rho)\big)(y)\int_{\mathcal{M}}\dd\m(x)\,f(x)p_{\frac{1}{n}}(x,y)\\
&\stackrel{\eqref{SpectralDecompoHeatKernel}}{=}\sum_{k\geq 1}e^{-\frac{1}{n}\lambda_k}\int_{\mathcal{M}}\dd\big(\mu^{n,t}-\mu^{n}-(\rho_{t}-\rho)\big)(y)\int_{\mathcal{M}}\dd\m(x)\,f(x)\phi_k(x)\phi_k(y)\\
&=\sum_{k\geq 1}e^{-\frac{1}{n}\lambda_k}\widehat{f}(k)\big(\widehat{\mu^{n,t}}(k)-\widehat{\mu^{n}}(k)-(\widehat{\rho_t}(k)-\widehat{\rho}(k))\big)\\
&\leq \Big(\sum_{k\geq 1}\lambda_k\vert\widehat{f}(k)\vert^2\Big)^{\frac{1}{2}}\Big(\sum_{k\geq 1}\frac{e^{-\frac{2}{n}\lambda_k}}{\lambda_k}\vert \widehat{\mu^{n,t}}(k)-\widehat{\mu^{n}}(k)-(\widehat{\rho_t}(k)-\widehat{\rho}(k))\vert^2\Big)^{\frac{1}{2}}
\end{align*}
Using that from \eqref{EllipticityAn} we have $\mu^{n,t+\frac{1}{n}}\geq \frac{\lambda}{2}$ and recalling \eqref{Boundtestfunctionf}, we get
$$ \Big(\sum_{k\geq 1}\lambda_k\vert\widehat{f}(k)\vert^2\Big)^{\frac{1}{2}}\leq \Big(\int_{\mathcal{M}}\vert \nabla f\vert^2\,\dd\m\Big)^{\frac{1}{2}}\leq \frac{2}{\lambda}\Big(\int_{\mathcal{M}}\mu^{n,t+\frac{1}{n}}\, \vert \nabla f\vert^2\,\dd\m\Big)^{\frac{1}{2}}\leq \frac{2}{\lambda}.$$
Furthermore, noticing that $\pp_t\phi_k=e^{-t\lambda_k}\phi_k$ which implies that, since $\pp_t$ is self-adjoint
$$\widehat{\mu^{n,t}}(k)=e^{-t\lambda_k}\widehat{\mu^n}(k)\quad\text{and}\quad \widehat{\rho_t}(k)=e^{-t\lambda_k}\widehat{\rho}(k),$$
we obtain
$$\sum_{k\geq 1}\frac{e^{-\frac{2}{n}\lambda_k}}{\lambda_k}\vert \widehat{\mu^{n,t}}(k)-\widehat{\mu^{n}}(k)-(\widehat{\rho_t}(k)-\widehat{\rho}(k))\vert^2=\sum_{k\geq 1}\frac{e^{-\frac{2}{n}\lambda_k}}{\lambda_k}\big(e^{-t\lambda_k}-1\big)^2\vert\widehat{\mu^n}(k)-\widehat{\rho}(k)\vert^2.$$
This leads to 
\begin{equation}\label{EstiPeyre:Eq2}
\int_{\mathcal{M}}\dd\big(\mu^{n,t+\frac{1}{n}}-\mu^{n,\frac{1}{n}}-(\rho_{t+\frac{1}{n}}-\rho_{\frac{1}{n}})\big)\, f\lesssim \Big(\sum_{k\geq 1}\frac{e^{-\frac{2}{n}\lambda_k}}{\lambda_k}\big(e^{-t\lambda_k}-1\big)^2\vert\widehat{\mu^n}(k)-\widehat{\rho}(k)\vert^2\Big)^{\frac{1}{2}}.
\end{equation}
For the second \rhs of \eqref{EstiPeyre:Eq1}, we introduce $w\in \dot{{\rm H}}^1$ satisfying 
$$-\Delta w=\rho_{t+\frac{1}{n}}-\rho_{\frac{1}{n}},$$
so that from an integration by parts, Cauchy-Schwarz' inequality and the combination of \eqref{EllipticityAn} and \eqref{Boundtestfunctionf}, we obtain
$$\int_{\mathcal{M}}f(\rho_{t+\frac{1}{n}}-\rho_{\frac{1}{n}})\,\dd\m=\int_{\mathcal{M}}\nabla f\cdot\nabla w\,\dd\m\lesssim \Big(\int_{\mathcal{M}}\vert\nabla w\vert^2\,\dd\m\Big)^{\frac{1}{2}}.$$
Using then the explicit formula $w=-\int_{\frac{1}{n}}^{t+\frac{1}{n}}\rho_{\tau}\,\dd\tau$ together with $\vert \int_{\frac{1}{n}}^{t+\frac{1}{n}}\rho_{\tau}\,\dd\tau\vert\stackrel{\eqref{Ellipticity}}{\leq} \Lambda t$, we get 
$$\int_{\mathcal{M}}\vert\nabla w\vert^2\,\dd\m=\int (\rho_{t+\frac{1}{n}}-\rho_{\frac{1}{n}})w\,\dd\m\lesssim t\|\rho_{t+\frac{1}{n}}-\rho_{\frac{1}{n}}\|_{\LL^1},$$
so that 
\begin{equation}\label{EstiPeyre:Eq3}
\Big\vert\int_{\mathcal{M}}f(\rho_{t+\frac{1}{n}}-\rho_{\frac{1}{n}})\,\dd\m\Big\vert\lesssim \sqrt{t}\|\rho_{t+\frac{1}{n}}-\rho_{\frac{1}{n}}\|^{\frac{1}{2}}_{\LL^1}.
\end{equation}
The combination of \eqref{EstiPeyre:Eq1}, \eqref{EstiPeyre:Eq2}, \eqref{EstiPeyre:Eq3} and \eqref{EstiPeyre:Eq4} leads to \eqref{SmoothingIneg}.

\medskip

{\sc Step 2. Proof of \eqref{ContractivityEstiExpectation}. }According to \eqref{SmoothingIneg}, it remains to show that 
\begin{equation}\label{ContractExpectation}
\mathbb{E}\bigg[\sum_{k\geq 1} \frac{e^{-\frac{2}{n}\lambda_k}}{\lambda_k}\big(e^{-t\lambda_k}-1\big)^2\vert\widehat{\mu^n}(k)-\widehat{\rho}(k)\vert^2\bigg]\lesssim \frac{\log\log(n)}{n}.
\end{equation}
Writing 
\begin{equation}\label{DecomposeFourier}
\widehat{\mu^n}(k)-\widehat{\rho}(k)=\frac{1}{n}\sum_{\ell=1}^n (\phi_k(X_\ell)-\mathbb{E}[\phi_k(X_\ell)]),
\end{equation}
we first expand the square in form of
\begin{equation}\label{SquareExpended}
\begin{aligned}
&\sum_{k\geq 1} \frac{e^{-\frac{2}{n}\lambda_k}}{\lambda_k}\big(e^{-t\lambda_k}-1\big)^2\vert\widehat{\mu^n}(k)-\widehat{\rho}(k)\vert^2\\
=&\frac{1}{n^2}\sum_{\ell=1}^n\sum_{k\geq 1} \frac{e^{-\frac{2}{n}\lambda_k}}{\lambda_k}\big(e^{-t\lambda_k}-1\big)^2\big(\phi_k(X_{\ell})-\mathbb{E}[\phi_k(X_\ell)]\big)^2\\
&+\frac{2}{n^2}\sum_{1\leq \ell<\ell'\leq n}\sum_{k\geq 1} \frac{e^{-\frac{2}{n}\lambda_k}}{\lambda_k}\big(e^{-t\lambda_k}-1\big)^2\big(\phi_k(X_{\ell})-\mathbb{E}[\phi_k(X_\ell)]\big)\big(\phi_k(X_{\ell'})-\mathbb{E}[\phi_k(X_{\ell'})]\big).
\end{aligned}
\end{equation}

For the first \rhs term of \eqref{SquareExpended}, we use the normalisation $\|\phi_k\|_{\LL^2}=1$ together with \eqref{Ellipticity} to the effect of 
\begin{equation}\label{BoundsMomentVectors}
\mathbb{E}\big[\vert \phi_k(X_1)-\mathbb{E}[\phi_k(X_1)]\vert^2\big]\leq \Lambda,
\end{equation}
and get
\begin{equation}\label{ExpendedSqaure:Est1}
\mathbb{E}\bigg[\sum_{\ell=1}^n\sum_{k\geq 1} \frac{e^{-\frac{2}{n}\lambda_k}}{\lambda_k}\big(e^{-t\lambda_k}-1\big)^2(\phi_k(X_{\ell})-\mathbb{E}[\phi_k(X_\ell)])^2\bigg]\leq \Lambda n\sum_{k\geq 1} \frac{e^{-\frac{2}{n}\lambda_k}}{\lambda_k}\big(e^{-t\lambda_k}-1\big)^2.
\end{equation}
For the second \rhs term of \eqref{SquareExpended}, we use the definition of the $\beta$-mixing coefficient \eqref{BetaMixing} together with the assumption \eqref{DecatBetaMixing} in form of
\begin{equation}\label{ExpendedSquare:Est2}
\begin{aligned}
&\mathbb{E}\bigg[\sum_{1\leq \ell<\ell'\leq n}\sum_{k\geq 1} \frac{e^{-\frac{2}{n}\lambda_k}}{\lambda_k}\big(e^{-t\lambda_k}-1\big)^2\big(\phi_k(X_{\ell})-\mathbb{E}[\phi_k(X_\ell)]\big)\big(\phi_k(X_{\ell'})-\mathbb{E}[\phi_k(X_{\ell'})]\big)\bigg]\\
&\leq \sum_{1\leq \ell<\ell'\leq n}\beta_{\ell\ell'}\sup_{x,y\in\mathcal{M}}\bigg\vert\sum_{k\geq 1} \frac{e^{-\frac{2}{n}\lambda_k}}{\lambda_k}\big(e^{-t\lambda_k}-1\big)^2\phi_k(x)\phi_k(y)\bigg\vert\\
&\stackrel{\eqref{DecatBetaMixing}}{\lesssim}n\sup_{x,y\in\mathcal{M}}\bigg\vert\sum_{k\geq 1} \frac{e^{-\frac{2}{n}\lambda_k}}{\lambda_k}\big(e^{-t\lambda_k}-1\big)^2\phi_k(x)\phi_k(y)\bigg\vert.
\end{aligned}
\end{equation}
The combination of \eqref{SquareExpended}, \eqref{ExpendedSqaure:Est1} and \eqref{ExpendedSquare:Est2} yields
\begin{align*}
&\mathbb{E}\bigg[\sum_{k\geq 1} \frac{e^{-\frac{2}{n}\lambda_k}}{\lambda_k}\big(e^{-t\lambda_k}-1\big)^2\vert\widehat{\mu^n}(k)-\widehat{\rho}(k)\vert^2\bigg]\\
&\lesssim \frac{1}{n}\bigg(\sum_{k\geq 1} \frac{e^{-\frac{2}{n}\lambda_k}}{\lambda_k}\big(e^{-t\lambda_k}-1\big)^2+\sup_{x,y\in\mathcal{M}}\bigg\vert\sum_{k\geq 1} \frac{e^{-\frac{2}{n}\lambda_k}}{\lambda_k}\big(e^{-t\lambda_k}-1\big)^2\phi_k(x)\phi_k(y)\bigg\vert\bigg).
\end{align*}
It remains to show that 
\begin{equation}\label{ControlExpectationLast}
\sum_{k\geq 1} \frac{e^{-\frac{2}{n}\lambda_k}}{\lambda_k}\big(e^{-t\lambda_k}-1\big)^2+\sup_{x,y\in\mathcal{M}}\bigg\vert\sum_{k\geq 1} \frac{e^{-\frac{2}{n}\lambda_k}}{\lambda_k}\big(e^{-t\lambda_k}-1\big)^2\phi_k(x)\phi_k(y)\bigg\vert\lesssim \log\log(n).
\end{equation}
We only treat the second \lhs term of \eqref{ControlExpectationLast}, the first term is controlled the same way. For any $x,y\in\mathcal{M}$, we expand
\begin{align*}
\lefteqn{\sum_{k\geq 1}\frac{e^{-\frac{2}{n}\lambda_k}}{\lambda_k}\big(e^{-t\lambda_k}-1\big)^2\phi_k(x)\phi_k(y)}\\ &\quad=\sum_{k\geq 1}\frac{1}{\lambda_k}e^{-2(\frac{1}{n}+t)\lambda_k}\phi_k(x)\phi_k(y)-2\sum_{k\geq 1}\frac{1}{\lambda_k}e^{-(\frac{2}{n}+t)\lambda_k}\phi_k(x)\phi_k(y)+\sum_{k\geq 1}\frac{1}{\lambda_k}e^{-\frac{2}{n}\lambda_k}\phi_k(x)\phi_k(y).
\end{align*}
We then disintegrate using the spectral decomposition of the heat kernel \eqref{SpectralDecompoHeatKernel} in form of  
$$\sum_{k\geq 1}\frac{1}{\lambda_k}e^{-s\lambda_k}\phi_k(x)\phi_k(y)=\int_{s}^1\dd s\, p_s(x,y)-\sum_{k\geq1}\frac{1}{\lambda_k}e^{-\lambda_k}\phi_k(x)\phi_k(y)\quad\text{for any $s>0$, }$$
so that we obtain
\begin{align*}
\sup_{x,y\in\mathcal{M}}\bigg\vert\sum_{k\geq 1}\frac{e^{-\frac{2}{n}\lambda_k}}{\lambda_k}\big(e^{-t\lambda_k}-1\big)^2\phi_k(x)\phi_k(y)\bigg\vert&=\sup_{x,y\in\mathcal{M}}\bigg\vert\int_{2(\frac{1}{n}+t)}^{\frac{2}{n}+t}p_s(x,y)\,\dd s+\int_{\frac{2}{n}}^{\frac{2}{n}+t}p_s(x,y)\,\dd s\bigg\vert\\
&\lesssim \int_{2(\frac{1}{n}+t)}^{\frac{2}{n}+t}s^{-1}\,\dd s+\int_{\frac{2}{n}}^{\frac{2}{n}+t}s^{-1}\,\dd s\\
&\lesssim \log\log(n).
\end{align*}

\medskip

{\sc Step 3. Proof of \eqref{ContractivityEstiExpMoment}. }It is a consequence of the following fluctuation estimates
\begin{equation}\label{ContractivityCOM:Eq1}
\mathbb{E}\big[\vert\widehat{\mu^n}(k)-\widehat{\rho}(k)\vert^{2p}\big]^{\frac{1}{p}}\lesssim p^2\,\frac{1}{n}\bigg(\log^{\frac{1}{\eta}}(n)\lambda^{\frac{1}{\log(n)}}_k+\frac{\lambda_k(1+\log^2(n))}{n}\bigg)\quad\text{for any $p<\infty$},
\end{equation}
together with Lemma \ref{momentexp}. Indeed, applying Minkowski's inequality followed by \eqref{ContractivityCOM:Eq1} and $\lambda^{\frac{1}{\log(n)}}_k e^{-\frac{2}{n}\lambda_k}\lesssim e^{-\frac{1}{n}\lambda_k}$ yield
\begin{align*}
&\mathbb{E}\bigg[\Big(\sum_{k\geq 1}\frac{e^{-\frac{2}{n}\lambda_k}}{\lambda_k}\big(e^{-t\lambda_k}-1\big)^2\vert\widehat{\mu^n}(k)-\widehat{\rho}(k)\vert^2\Big)^p\bigg]^{\frac{1}{p}}\\
&\lesssim p^2\frac{1}{n}\sum_{k\geq 1}\frac{e^{-\frac{1}{n}\lambda_k}}{\lambda_k}\big(e^{-t\lambda_k}-1\big)^2\bigg(\log^{\frac{1}{\eta}}(n)+\frac{\lambda_k(1+\log^2(n))}{n}\bigg),
\end{align*}
and \eqref{ContractivityEstiExpMoment} follows from
\begin{equation}\label{ContractivityConclusion:Eq2}
\sum_{k\geq 1}\frac{e^{-\frac{1}{n}\lambda_k}}{\lambda_k}\big(e^{-t\lambda_k}-1\big)^2\lesssim \log\log(n)\quad\text{and}\quad\sum_{k\geq 1}e^{-\frac{1}{n}\lambda_k}\big(e^{-t\lambda_k}-1\big)^2\lesssim t^{-1},
\end{equation}
which is obtained the same way as in \eqref{ControlExpectationLast} using additionally the trace formula \eqref{eq:traceformula}.

\medskip

We now prove \eqref{ContractivityCOM:Eq1}. It follows from the estimate on the probability tails
\begin{equation}\label{ContractivityCOM:Eq2}
\mathbb{P}\big(\vert\widehat{\mu^n}(k)-\widehat{\rho}(k)\vert>\lambda\big)\lesssim \exp\bigg(-\frac{1}{C}\frac{n^2\lambda^2}{n\log^{\frac{1}{\eta}}(n)\lambda^{\frac{1}{\log(n)}}_k+\lambda_k+n\lambda\lambda^{\frac{1}{2}}_k\log^2(n)}\bigg)\quad\text{for any $\lambda>0$},
\end{equation}
for some $C>0$, together with a simple application of the layer-cake representation. 

\medskip

To see \eqref{ContractivityCOM:Eq2}, we use \eqref{DecomposeFourier} together with Proposition \ref{BersteinCorrelated} to obtain
$$\mathbb{P}\big(\vert\widehat{\mu^n}(k)-\widehat{\rho}(k)\vert>\lambda\big)\lesssim \exp\Big(-\frac{n^2\lambda^2}{nv^2+\|\phi_k\|^2_{\LL^{\infty}}+n\lambda\|\phi_k\|_{\LL^{\infty}}\log^2(n)}\Big),$$
with 
\begin{equation}\label{vsquaredContra}
v^2:=\mathbb{E}[\vert\phi_k(X_1)-\mathbb{E}[\phi_k(X_1)]\vert^2]+2\sum_{i<j}\big\vert\mathbb{E}\big[(\phi_k(X_i)-\mathbb{E}[\phi_k(X_i)])(\phi_k(X_j)-\mathbb{E}[\phi_k(X_j)])\big]\big\vert.
\end{equation}
The estimate \eqref{ContractivityCOM:Eq2} is then a consequence of 
\begin{equation}\label{ContractivityCOM:Eq4}
 \|\phi_k\|_{\LL^{\infty}}\lesssim \lambda^{\frac{1}{2}}_k\quad\text{and}\quad v^2\lesssim \log^{\frac{1}{\eta}}(n)\lambda^{\frac{1}{\log(n)}}_k.
\end{equation}
The first item of \eqref{ContractivityCOM:Eq4} has been treated in \eqref{eq:eigvaleighfunc}. 
%
%
%
%
For the second item of \eqref{ContractivityCOM:Eq4}, we use \eqref{BoundsMomentVectors} and, combined with \eqref{DecayAlphaMixing}, we obtain
\begin{align*}
\sum_{i<j}\big\vert\mathbb{E}\big[(\phi_k(X_i)-\mathbb{E}[\phi_k(X_i)])(\phi_k(X_j)-\mathbb{E}[\phi_k(X_j)])\big]\big\vert\lesssim& \|\phi_k\|^{\frac{1}{\log(n)}}_{\LL^\infty}\sum_{\ell\geq 0}\exp\big(-b\tfrac{\ell^{\eta}}{\log(n)}\big)\\
\lesssim& \log^{\frac{1}{\eta}}(n)\lambda^{\frac{1}{\log(n)}}_k.
\end{align*}
 \end{proof}
 %
%
%
%
%
%
%
%

\subsection{Proof of Theorem \ref{MainResultMatchingClouds}: Approximation of the transport plan}\label{ProofTransport}
We only give the arguments for \eqref{ApproximationTransportPlan}, \eqref{ApproximationTransportPlanBis} is proved the same way using the corresponding results \eqref{Eq22}, \eqref{ContractivityEstiExpMoment} and \eqref{eq:matchingcostHighMoments} in the case $\eta>2$ (where some additional comments are given if necessary along the proof). We split the proof into four steps. In the first step, we display some preliminary estimates useful all along the proof. In the second step, we deal with the approximation error that occurs in the process of regularizing $\rho$ into $\rho_\delta$. In the third step, we estimate the $W_2$-distance for the regularized quantity using the quantitative stability result in \cite[Theorem 3.2]{ambrosio2019optimal}, splitting the estimates in small pieces that we control in the fourth step. We finally comment on the proof of Remark \ref{FlatGeometry} and Theorem \ref{th1}, which are obtained with similar techniques.

\medskip

{\sc Step 1. Preliminary estimates. }

\medskip

\textbf{Heat kernel regularization. }The assumption $\rho\in \mathrm{H}^\varepsilon$ provides 
\begin{equation}\label{ApproximationRho}
\|\rho_s-\rho\|_{\LL^2}\lesssim \|\rho\|_{\mathrm{H}^\varepsilon} \,s^{\varepsilon}\quad\text{for any $s>0$}.
\end{equation}
Indeed, using the definition of the heat-kernel, Minkowski's inequality and the spectral decomposition \eqref{SpectralDecompoHeatKernel}, we have
\begin{align*}
\|\rho_{s}-\rho\|^2_{\LL^2}&=\int_{\mathcal{M}}\dd \m(x)\bigg\vert\int_{\mathcal{M}}\rho(y)\dd\m(y)\int_{0}^{s}\dd\tau\,\partial_{\tau}p_{\tau}(x,y)\bigg\vert^2\\
&\leq\bigg(\int_{0}^{s}\dd\tau\,\bigg(\int_{\mathcal{M}}\dd\m(x)\bigg\vert\sum_{k\geq 1}\widehat{\rho}(k)\, \lambda_k e^{-\tau\lambda_k}\phi_k(x)\bigg\vert^2\bigg)^{\frac{1}{2}}\bigg)^2.
\end{align*}
Noticing that 
$$\int_{\mathcal{M}}\dd\m(x)\bigg\vert\sum_{k\geq 1}\widehat{\rho}(k)\, \lambda_k e^{-\tau\lambda_k}\phi_k(x)\bigg\vert^2=\sum_{k\geq 1}\lambda^{2\varepsilon}_k\vert\widehat{\rho}(k)\vert^2\lambda^{2(1-\varepsilon)}_k e^{-2\tau\lambda_k}\lesssim \|\rho\|^2_{{\rm H}^{\varepsilon}}\tau^{2(\varepsilon-1)},$$
we get that
$$\|\rho_{s}-\rho\|^2_{\LL^2}\lesssim \|\rho\|^2_{{\rm H}^{\varepsilon}}\Big(\int_{0}^{s}\tau^{\varepsilon-1}\,\dd\tau\Big)^2\lesssim_{\varepsilon} \|\rho\|^2_{{\rm H}^{\varepsilon}}s^{2\varepsilon}.$$
\textbf{$\LL^\infty$-estimates. }Let $\kappa_1>0$ and $\upsilon>\max\{(\kappa+2)\kappa_1,\frac{1}{\eta}\}$, where $\kappa$ is given in Proposition \ref{Fluctuation}. For the given choices 
\begin{equation}\label{Choicestdelta}
t:=\frac{\log^{\kappa_2}(n)}{n}\quad\text{and}\quad\delta:=\frac{1}{\log^{\kappa_1}(n)},
\end{equation}
provided in Proposition \ref{Fluctuation}, we define $h^{n,t}_\delta\in \dot{\mathrm{H}}^1$ the weak solution of 
\begin{equation}\label{Equationhdelta}
\nabla\cdot\rho_\delta \nabla h^{n,t}_\delta=\mu^{n,t}-\nu^{m,t}.
\end{equation}
Note that by linearity, one can decompose $h^{n,t}_\delta=h^{(2)n,t}_\delta-h^{(1)n,t}_\delta$ with 
\begin{equation}\label{Decompohnt}
-\nabla\cdot\rho_\delta \nabla h^{(1)n,t}_\delta=\mu^{n,t}-\rho_t\quad\text{and}\quad-\nabla\cdot\rho_\delta \nabla h^{(2)n,t}_\delta=\nu^{m,t}-\rho_t,
\end{equation}
so that, considering $u^{n,t}_\delta$ as in \eqref{Eq3Bis} and likewise $v^{n,t}_\delta$ with $\mu^{n,t}$ replaced by $\nu^{m,t}$ and defining 
\begin{equation}\label{eq:An}
\mathcal{A}_n:=\Big\{\|\mu^{n,t}-\rho_t\|_{\LL^{\infty}}+\|\nu^{m,t}-\rho_t\|_{\LL^{\infty}}\leq \tfrac{1}{\log^{\upsilon}(n)}\Big\}
\end{equation}
as well as
\begin{equation}\label{eq:Bdeltan}
\mathcal{B}_{\delta,n}:=\Big\{\big\|\big(\nabla (u^{n,t}_\delta, v^{n,t}_\delta),\nabla^2 (u^{n,t}_\delta, v^{n,t}_\delta)\big)\big\|_{\LL^\infty}\leq \tfrac{1}{\log^{\upsilon}(n)}\Big\},
\end{equation}
we deduce from Proposition \ref{Fluctuation} and $m=m(n)\underset{n\uparrow\infty}{\sim} qn$ that
\begin{equation}\label{Boundhnt}
\big\|(\nabla h^{n,t}_\delta,\nabla^2 h^{n,t}_\delta)\big\|_{\LL^{\infty}}\lesssim \frac{1}{\log^{\upsilon-(\kappa+2)\kappa_1}(n)}\quad\text{in $\mathcal{A}_n\cap \mathcal{B}_{\delta,n}$.}
\end{equation}
\textbf{$\LL^q$-estimates. }A similar decomposition as in \eqref{Decompohnt} of \eqref{eq:f^nmt} together with \eqref{Eq22Bis} and the choice of $t$ in \eqref{Choicestdelta} yields
\begin{equation}\label{LqEstihnt}
\Big(\int_{\mathcal{M}}\vert \nabla h^{n,t}\vert^{\bar{q}}\,\dd\m\Big)^{\frac{2}{\bar{q}}}\leq \mathcal{C}_n\frac{\log(n)+\log^{\frac{1}{\eta}}(n)}{n},
\end{equation}
where $\mathcal{C}_n$ denotes, all along the proof, a random variable which satisfies \eqref{MomentBoundCn} and may change from line to line. 

\medskip

\textbf{$\LL^2$-regularization error. }Note that from \eqref{Equationhdelta} and \eqref{eq:f^nmt}
\begin{equation}\label{DecompositionDeltaProof}
-\nabla\cdot\rho_\delta\nabla(h^{n,t}_\delta-h^{n,t})=\nabla\cdot (\rho_\delta-\rho)\nabla h^{n,t},
\end{equation}
so that from an energy estimate, Hölder's inequality, \eqref{LqEstihnt} and \eqref{Ellipticity}, we obtain 
\begin{equation}\label{ErrorApproxL2}
\int_{\mathcal{M}}\vert\nabla (h^{n,t}_\delta-h^{n,t})\vert^2\,\dd\m\leq \mathcal{C}_n\|\rho_{\delta}-\rho\|^2_{\LL^{2(\frac{\bar{q}}{2})'}}\frac{\log(n)+\log^{\frac{1}{\eta}}(n)}{n},
\end{equation}
where $\bar{q}$ denotes the Meyers' exponent of the operator $-\nabla\cdot \rho\nabla$, see Theorem \ref{Meyers}.

\medskip

Finally, since $\inf_\pi W^2_2(\pi,\gamma^{n,t})\leq (\text{diam}(\mathcal{M}))^2$ and \eqref{Eq13} holds, we can restrict our analysis in $\mathcal{A}_n\cap\mathcal{B}_{\delta,n}$ that we do for the rest of the proof.

\medskip
{\sc Step 2. Regularization error.}
We show that \eqref{ErrorApproxL2} survives when measuring the $W_2$-distance, namely
\begin{equation}\label{eq:thm1explipW2}
W^2_2(\gamma^{n,t}_\delta,\gamma^{n,t})\leq \mathcal{C}_n \|\rho_\delta-\rho\|^2_{\LL^{2(\frac{\bar{q}}{2})'}}\frac{\log(n)+\log^{\frac{1}{\eta}}(n)}{n}\quad\text{with }\gamma^{n,t}_{\delta}:=\big(\mathrm{Id},\exp(\nabla h^{n,t}_\delta)\big)_\#\mu^{n,t}.
\end{equation}
\medskip

Using the coupling $\big((\mathrm{Id},\exp(\nabla h^{n,t}_\delta)),(\mathrm{Id},\exp(\nabla h^{n,t})\big)_\#\mu^{n,t}$ as a competitor in \eqref{Wasserstein} and the fact that $\|\mu^{n,t}\|_{\LL^{\infty}}\lesssim 1$ in $\mathcal{A}_n$, we have
\begin{equation}\label{eq:thm1explip}
W^2_2(\gamma^{n,t}_\delta,\gamma^{n,t})\leq \int_{\mathcal{M}}\mu^{n,t}\dd^2 \big(\exp(\nabla h^{n,t}_\delta), \exp(\nabla h^{n,t})\big)\,\dd\m\lesssim \int_{\mathcal{M}}\dd^2 \big(\exp(\nabla h^{n,t}_\delta), \exp(\nabla h^{n,t})\big)\,\dd\m.
\end{equation}
We then claim that 
\begin{equation}\label{eq:thm1explip2}
\begin{aligned}
\int_\mathcal{M} \dd^2 \big(\exp(\nabla h^{n,t}_\delta), \exp(\nabla h^{n,t})\big)\,\dd\m \lesssim&\,\| \nabla h^{n,t}_\delta - \nabla h^{n,t} \|_{\LL^2}^2\\
&+ \|\rho_\delta-\rho\|^2_{\LL^{(\frac{\bar{q}}{2})'}}\frac{\log(n)}{n}\frac{\| \nabla h^{n,t}_\delta - \nabla h^{n,t}\|_{\LL^2}^2}{\mathbb{E}[ \| \nabla h^{n,t}_\delta - \nabla h^{n,t}\|^{2}_{\LL^2}]}.
\end{aligned}
\end{equation}
which, combined with \eqref{eq:thm1explip} and \eqref{ErrorApproxL2} yields \eqref{eq:thm1explipW2}.

\medskip

We now justify \eqref{eq:thm1explip2}. The difficulty arises from the fact that $\exp$ is not globally Lipschitz. To overcome this, we define
\begin{align*}
\mathrm{E}_n:=\Big\{\vert\nabla h^{n,t}_\delta-\nabla h^{n,t}\vert &\leq C^{-1}_n\mathbb{E}\big[\|\nabla h^{n,t}_\delta-\nabla h^{n,t}\|^{2}_{\LL^{2}}\big]^{\frac{1}{2}}\Big\}\\
& \text{with }C_n:=\varsigma^{-1}\|\rho_{\delta}-\rho\|_{\LL^{(\frac{\bar{q}}{2})'}}\sqrt{\frac{\log(n)+\log^{\frac{1}{\eta}}(n)}{n}},
\end{align*}
for a given $\varsigma$ fixed later, and we split
\begin{align*}
\lefteqn{\int_\mathcal{M} \dd^2 \big(\exp(\nabla h^{n,t}_\delta), \exp(\nabla h^{n,t})\big)\,\dd \m}\\
&= \int_\mathcal{M} \mathds{1}_{\mathrm{E}_n}\dd^2 \big(\exp(\nabla h^{n,t}_\delta), \exp(\nabla h^{n,t})\big)\,\dd \m+\int_\mathcal{M} \mathds{1}_{\mathrm{E}^c_n}\dd^2 \big(\exp(\nabla h^{n,t}_\delta), \exp(\nabla h^{n,t})\big)\,\dd \m\\
&\leq\int_\mathcal{M}\mathds{1}_{\mathrm{E}_n}\dd^2 \big(\exp(\nabla h^{n,t}_\delta), \exp(\nabla h^{n,t})\big)\,\dd \m+(\text{diam}(\mathcal{M}))^2 \m(\mathrm{E}^c_n).
\end{align*}
For the first right-hand side integral, note that from the choice of $C_n$ and \eqref{ErrorApproxL2}, we can choose $\varsigma\ll 1$ uniformly in $n$ such that in $\mathrm{E}_n$ the quantity $\vert\nabla h^{n,t}_{\delta}-\nabla h^{n,t}\vert$ can be made arbitrary small. Since $\exp$ is Lipschitz-continuous in a neighborhood of the null vector, we deduce 
$$\int_\mathcal{M}\mathds{1}_{\mathrm{E}_n}\dd^2 \big(\exp(\nabla h^{n,t}_\delta), \exp(\nabla h^{n,t})\big)\,\dd\m\lesssim \|\nabla h^{n,t}_\delta-\nabla h^{n,t}\|^2_{\LL^{2}}.$$
For the second right-hand side term, we simply apply Markov's inequality in form of
$$\m(\mathrm{E}^c_n)\leq C^2_n\frac{\|\nabla h^{n,t}_\delta-\nabla h^{n,t}\|^2_{\LL^{2}}}{\mathbb{E}\big[\|\nabla h^{n,t}_\delta-\nabla h^{n,t}\|^{2}_{\LL^{2}}\big]}.$$
The combination of the two previous estimates gives \eqref{eq:thm1explip2}.

\medskip

To prove \eqref{ApproximationTransportPlanBis}, we need to control arbitrary $p$-moments, according to Lemma \ref{momentexp}. The argument above can be easily adapted in this case by considering 
\begin{align*}
\mathrm{E}_n:=\Big\{\vert\nabla h^{n,t}_\delta-\nabla h^{n,t}\vert &\leq C^{-1}_n\mathbb{E}\big[\|\nabla h^{n,t}_\delta-\nabla h^{n,t}\|^{2p}_{\LL^{2}}\big]^{\frac{1}{2p}}\Big\}\\
& \text{with }C_n:=\varsigma\|\rho_{\delta}-\rho\|_{\LL^{(\frac{\bar{q}}{2})'}}\sqrt{\frac{\log^{\frac{1}{\eta}}(n)\log(n)}{n}}.
\end{align*}
We then follow the same argument, choosing $\varsigma^{-1}=O(\sqrt{p})$.

\medskip

{\sc Step 3. Quantitative stability. }We show that 
\begin{equation}\label{QuantitativeEstiPlan}
\begin{aligned}
\inf_\pi W^2_2(\pi,\gamma^{n,t}_\delta)\lesssim&\, W^2_2\big(\nu^{m,t},\exp(\nabla h^{n,t}_\delta)_\#\mu^{n,t}\big)+W_2\big(\nu^{m,t},\exp(\nabla h^{n,t}_\delta)_\#\mu^{n,t}\big)W_2(\mu^{n},\nu^{m})\\
&+W^2_2(\nu^{m,t},\nu^{m})+W^2_2(\mu^{n,t},\mu^{n})+\big(W_2(\nu^{m,t},\nu^m)+W_2(\mu^{n,t},\mu^n)\big)W_2(\mu^n,\nu^m),
\end{aligned}
\end{equation}
where we recall that $\gamma^{n,t}_\delta$ is defined in \eqref{eq:thm1explipW2}.

\medskip

Let $\pi$ be a coupling between $\mu^n$ and $\nu^m$. We introduce a regularization parameter $s<1$ and, smoothing the measure $\mu^n$ into $\mu^{n,s}:=\text{P}_s\mu^n$, the optimal transport plan $\pi^{n,s}$ from $\mu^{n,s}$ to $\nu^m$ is represented by a transport map $T^{n,s}$, according to McCann’s theorem \cite{McCann2001}, that is
$$\pi^{n,s}=(\text{Id},T^{n,s})_\#\mu^{n,s}.$$
We then apply the triangle inequality in form of
\begin{equation}\label{RemarkW2}
\begin{aligned}
W_2(\pi^{n,s},\gamma^{n,t}_\delta)\leq &\, W_2((\mathrm{Id},\exp(\nabla h^{n,t}_\delta)_\#\mu^{n,s},(\mathrm{Id},\exp(\nabla h^{n,t}_\delta)_\#\mu^{n,t})\\
&+W_2(\pi^{n,s},(\mathrm{Id},\exp(\nabla h^{n,t}_\delta)_\#\mu^{n,s}).
\end{aligned}
\end{equation}
First, using \eqref{Boundhnt}, $\nabla h^{n,t}_\delta$ is Lipschitz-continuous and $\|\nabla h^{n,t}_\delta\|_{\LL^{\infty}}$ can be made as small as possible for $n$ large. Since $\exp$ is Lipschitz-continuous in a neighborhood of the null vector, we learn from Lemma \ref{lem:transportcontraction} that
\begin{equation}\label{EstimateFirstTermRemark}
W^{2}_2\big((\mathrm{Id},\exp(\nabla h^{n,t}_\delta))_\#\mu^{n,s},(\mathrm{Id},\exp(\nabla h^{n,t}_\delta))_\#\mu^{n,t}\big)\lesssim W^{2}_2(\mu^{n,t},\mu^{n,s}).
\end{equation}
Second, we build a competitor for the second right-hand side term of \eqref{RemarkW2}: Defining
$$\Gamma:=\big((\text{Id},T^{n,s}),(\text{Id},\exp(\nabla h^{n,t}))\big)_\#\mu^{n,s},$$
we have
\begin{align*}
W^{2}_2(\pi^{n,s},(\mathrm{Id},\exp(\nabla h^{n,t})_\#\mu^{n,s})&\leq \int_{\mathcal{M}\times\mathcal{M}\times\mathcal{M}\times\mathcal{M}}\delta^2\big((x,z),(y,w)\big)\dd \Gamma\big((x,y),(z,w)\big)\\
&=\int_{\mathcal{M}}\delta^2\big((x,T^{n,s}(x)),(x,\exp(\nabla h^{n,t}(x)))\big)\,\mu^{n,s}(x)\,\dd\m(x)\\
&\stackrel{\eqref{DistanceTransportPlan}}{=}\int_{\mathcal{M}}\dd^2\big(T^{n,s},\exp(\nabla h^{n,t}_{\delta})\big)\mu^{n,s}\,\dd\m.
\end{align*}
Using again \eqref{Boundhnt} we can apply, for large $n$, the quantitative stability result of transport maps, Theorem \ref{StabilityResultMap}, to $\mu_1=\nu^m$, $\mu_2=\exp(\nabla h^{n,t}_\delta)_\#\mu^{n,s}$ and $\nu=\mu^{n,s}$ to the effect of 
\begin{equation*}
\begin{aligned}
\int_{\mathcal{M}}\dd^2 \big(T^{n,s}, \exp(\nabla h^{n,t}_\delta)\big)\mu^{n,s}\,\dd\m \lesssim&\,W_2^2 \big(\nu^m,\exp(\nabla h^{n,t}_\delta)_\#\mu^{n,s}\big)\\
&+W_2\big(\nu^m,\exp(\nabla h^{n,t}_\delta)_\#\mu^{n,s}\big) W_2\big(\mu^{n,s},\nu^m\big),
\end{aligned}
\end{equation*}
which turns into, using the triangle inequality,
\begin{equation}\label{EstimateSecondRemark}
\begin{aligned}
&\int_{\mathcal{M}}\dd^2 \big(T^{n,s}, \exp(\nabla h^{n,t}_\delta)\big)\mu^{n,s}\,\dd\m\\
&\lesssim\,W_2^2 \big(\nu^{m,t},\exp(\nabla h^{n,t}_\delta)_\#\mu^{n,t}\big)\\
&+W_2\big(\nu^{m,t},\exp(\nabla h^{n,t}_\delta)_\#\mu^{n,t}\big) W_2\big(\mu^{n,s},\nu^m\big)\\
&+W^2_2(\nu^{m,t},\nu^m)+W^2_2\big((\mathrm{Id},\exp(\nabla h^{n,t}_\delta)_\#\mu^{n,s},(\mathrm{Id},\exp(\nabla h^{n,t}_\delta)_\#\mu^{n,t}\big)\\
&+\big(W_2(\nu^{m,t},\nu^m)+W_2((\mathrm{Id},\exp(\nabla h^{n,t}_\delta)_\#\mu^{n,s},(\mathrm{Id},\exp(\nabla h^{n,t}_\delta)_\#\mu^{n,t})\big)W_2(\mu^{n,s},\nu^m).
\end{aligned}
\end{equation}
The combination of \eqref{RemarkW2}, \eqref{EstimateFirstTermRemark} and \eqref{EstimateSecondRemark} yields
\begin{equation}\label{EstimateRemarkConclusion}
\begin{aligned}
W^2_2(\pi^{n,s},\gamma^{n,t}_\delta)\lesssim&\, W^2_2\big(\nu^{m,t},\exp(\nabla h^{n,t}_\delta)_\#\mu^{n,t}\big)+W_2\big(\nu^{m,t},\exp(\nabla h^{n,t}_\delta)_\#\mu^{n,t}\big)W_2(\mu^{n,s},\nu^{m})\\
&+W^2_2(\nu^{m,t},\nu^{m})+W^2_2(\mu^{n,t},\mu^{n,s})+\big(W_2(\nu^{m,t},\nu^m)+W_2(\mu^{n,t},\mu^{n,s})\big)W_2(\mu^{n,s},\nu^m).
\end{aligned}
\end{equation}
Since $\mu^{n,s}\underset{s\downarrow 0}{\rightharpoonup}\mu^{n}$, and consequently (up to extracting a subsequence) $\pi^{n,s}\underset{s\downarrow 0}{\rightharpoonup}\pi$, for some optimal transport plan $\pi$, according to the qualitative stability result \cite[Theorem 5.20]{OldNewVillani}, we can pass to the limit as $s\downarrow 0$ in \eqref{EstimateRemarkConclusion} which leads to \eqref{QuantitativeEstiPlan}.

\medskip

{\sc Step 4. Proof of \eqref{ApproximationTransportPlan}. }
We now fix $\kappa_1=\frac{1}{\eta}(\tfrac{\bar q}{2})\frac{1}{2\varepsilon}+1$ such that, applying \eqref{ApproximationRho}, the regularization error \eqref{eq:thm1explipW2} turns into, recalling that $\delta$ is given by \eqref{Choicestdelta}, 
\begin{equation}\label{RegularizedErrorGoodScaling}
W^2_2(\gamma^{n,t}_\delta,\gamma^{n,t})\leq \mathcal{C}_n\delta^{\frac{2\varepsilon}{(\frac{\bar q}{2})'}}\frac{\log(n)+\log^{\frac{1}{\eta}}(n)}{n}\leq \mathcal{C}_n\frac{1}{n}.
\end{equation}
It remains to show that 
\begin{equation}\label{MainThConclu:Eq1}
\begin{aligned}
&\inf_\pi W_2(\pi,\gamma^{n,t}_\delta)\\
&\leq \mathcal{C}_n\frac{\log(n)}{n} \Big(\sqrt{\log^{\kappa_2-1}(n)\|\rho_{t+\frac{1}{n}}-\rho_t\|_{\LL^1}}+\|\rho_\delta-\rho\|^2_{\LL^{2(\frac{\bar{q}}{2})'}}+\|\rho_t-\rho\|^2_{\LL^{2(\frac{\bar{q}}{2})'}}+\tfrac{1}{\log^{\upsilon}(n)}+\sqrt{\tfrac{\log\log n}{\log n}}\Big),
\end{aligned}
\end{equation}
which together with \eqref{ApproximationRho} and \eqref{RegularizedErrorGoodScaling} leads to \eqref{ApproximationTransportPlan}. To show \eqref{MainThConclu:Eq1}, we control each terms of \eqref{QuantitativeEstiPlan} separately. 

\medskip

The three last terms are controlled using the contractivity estimate \eqref{ContractivityEstiExpectation} and \eqref{eq:matchingcost} which gives 
\begin{align*}
&W^2_2(\nu^{m,t},\nu^{m})+W^2_2(\mu^{n,t},\mu^{n})+\big(W_2(\nu^{m,t},\nu^m)+W_2(\mu^{n,t},\mu^n)\big)W_2(\mu^n,\nu^m)\\
&\leq \mathcal{C}_n\frac{\log(n)}{n}\bigg(\sqrt{\frac{\log\log(n)}{\log(n)}}+\sqrt{\log^{\kappa-1}(n)\|\rho_{t+\frac{1}{n}}-\rho_t\|_{\LL^1}}\bigg).
\end{align*}
For the first two terms, we argue that 
\begin{equation}\label{MainResultConclu:Eq2}
W^2_2\big(\nu^{m,t},\exp(\nabla h^{n,t}_\delta)_\#\mu^{n,t}\big)\leq \mathcal{C}_n\big(\|\rho_{\delta}-\rho\|^{2}_{\LL^{2(\frac{\bar q}{2})'}}+\|\rho_{t}-\rho\|^{2}_{\LL^{2(\frac{\bar q}{2})'}}+\tfrac{1}{\log^{\upsilon}(n)}\big)\frac{\log(n)}{n},
\end{equation}
which combined with \eqref{eq:matchingcost} leads to \eqref{MainThConclu:Eq1}.
\medskip

Let us define the curve $\eta: s\in [0,1]\mapsto \eta_s:=(1-s)\mu^{n,t}+s\nu^{m,t}$ and note that from \eqref{Equationhdelta} we have 
$$\frac{\dd}{\dd s}\eta_s+\nabla\cdot \Big(\eta_s\frac{\rho_\delta\nabla h^{n,t}_\delta}{\eta_s}\Big)=0.$$
Applying Benamou-Brenier' theorem \cite{benamou2000computational}, we learn that 
$$\nu^{m,t}=\phi(1,\cdot)_\#\mu^{n,t}\quad\text{with $\phi$ is the flow induced by $s\mapsto \frac{\rho_\delta\nabla h^{n,t}_\delta}{\eta_s}$}.$$
Next, using that 
\begin{align*}
\Big\vert \frac{\rho_\delta\nabla h^{n,t}_\delta}{\eta_s}-\nabla h^{n,t}_\delta\Big\vert&\lesssim \Big(\vert\rho_\delta-\rho\vert+\vert\rho_t-\rho\vert+\vert\mu^{n,t}-\rho_t\vert+\vert\nu^{m,t}-\rho_t\vert\Big)\vert\nabla h^{n,t}_\delta\vert\\
&\lesssim \Big(\vert\rho_\delta-\rho\vert+\vert\rho_t-\rho\vert+\frac{1}{\log^{\upsilon}(n)}\Big)\vert\nabla h^{n,t}_\delta\vert,
\end{align*}
and applying \cite[Proposition A.1]{ambrosio2019finer} together with Hölder's inequality yields
\begin{align}
W^2_2\big(\nu^{m,t},\exp(\nabla h^{n,t}_\delta)_\#\mu^{n,t}\big)&=W^2_2\big(\phi(1,\cdot)_\#\mu^{n,t},\exp(\nabla h^{n,t}_\delta)_\#\mu^{n,t}\big)\nonumber\\
&\lesssim \int_{\mathcal{M}}\Big(\vert\rho_\delta-\rho\vert+\vert\rho_t-\rho\vert+\frac{1}{\log^{\upsilon}(n)}\Big)^2\vert\nabla h^{n,t}_\delta\vert^2\nonumber\\
&\leq \big(\|\rho_{\delta}-\rho\|^{2}_{\LL^{2(\frac{\bar q}{2})'}}+\|\rho_{t}-\rho\|^{2}_{\LL^{2(\frac{\bar q}{2})'}}+\tfrac{1}{\log^{\upsilon}(n)}\big)\Big(\int_{\mathcal{M}}\vert\nabla h^{n,t}_\delta\vert^{\bar{q}}\Big)^{\frac{2}{\bar{q}}}.\label{StabilityFlowProof}
\end{align}
Using Meyers' estimate of Proposition \ref{Meyers} to \eqref{DecompositionDeltaProof} together with \eqref{Ellipticity} and \eqref{LqEstihnt} provides 
$$\Big(\int_{\mathcal{M}}\vert\nabla h^{n,t}_\delta\vert^{\bar{q}}\Big)^{\frac{2}{\bar{q}}}\lesssim \Big(\int_{\mathcal{M}}\vert\nabla h^{n,t}\vert^{\bar{q}}\Big)^{\frac{2}{\bar{q}}}\stackrel{\eqref{LqEstihnt}}{\leq} \mathcal{C}_n\frac{\log(n)+\log^{\frac{1}{\eta}}(n)}{n},$$
which, combined with \eqref{StabilityFlowProof}, yields \eqref{MainResultConclu:Eq2}.

\medskip

We finally point out that, in the case $\eta>2$, we use \eqref{ContractivityEstiExpMoment} and \eqref{eq:matchingcostHighMoments} and the same computations lead to \eqref{ApproximationTransportPlanBis}.

\medskip

{\sc Step 5. Proof of Theorem \ref{th1} and Remark \ref{FlatGeometry}. }The proof of Theorem \ref{th1} follows the same strategy with the main difference that Step 3 is now dropped and Theorem \ref{StabilityResultMap} is directly applied with $\mu_1=\mu^n$, $\nu=\rho$ and $\mu_2=\exp(\nabla f^{n,t}_\delta)_\#\rho\,\dd m$ where $f^{n,t}_\delta$ solves 
$$\nabla\cdot \rho_\delta\nabla f^{n,t}=\mu^{n,t}-\rho_t.$$
The improvement of Remark \ref{FlatGeometry} follows from the improved contractivity estimate \eqref{ContractivityEstiExpMoment}: Under the assumption \ref{FlatenessAss}, we have (keeping the notations as in Proposition \ref{Contractivity})
\begin{equation}\label{InproveContract}
W^2_2(\mu^{n,t},\mu^n)\leq \mathcal{D}_n\frac{\log\log(n)}{n},
\end{equation}
i.e. we do not have the loss $\log^\frac{1}{\eta}(n)$ in \eqref{ContractivityEstiExpMoment}. Inspecting the proof of \eqref{ContractivityEstiExpMoment}, the loss $\log^{\frac{1}{\eta}}(n)$ comes from estimating $v^2$ defined in \eqref{vsquaredContra}. We obtain \eqref{InproveContract} by simply using \eqref{FlatenessAss} and \eqref{AlphaMixing} to upgrade the second item of \eqref{ContractivityCOM:Eq4} into
$$v^2\lesssim 1+\sum_{\ell\geq 0}\exp(-b\ell^\eta)\lesssim 1.$$
\appendix
\section{Probabilistic and PDE tools}\label{app:ProbPDE}

This section is devoted to recall some probabilistic and analytical tools needed in the proofs. 
%
%
%
%
%
%
%
%
We first recall some concentration inequalities for sequences of random variable satisfying Assumption \ref{Assumptions}. Originally proved for i.i.d. samples, see for instance \cite[Theorem 3.6 \& 3.7]{chung2006concentration}, the proofs in the correlated case can be found in \cite[Theorem 1]{merlevede2011bernstein} and \cite[Theorem 2]{merlevede2009bernstein}.
\begin{proposition}\label{BersteinCorrelated}
Let $n\in\mathbb{N}$, $M>0$, $\{X_i\}_{i}$ be a family of centred random variables such that $\sup_{i\geq 1}\vert X_i\vert\leq M$ for which \eqref{DecayAlphaMixing} holds.

\medskip

For any $\lambda>0$, it holds for some constants $(C_i)_{i\in\{1,\cdot,5\}}$ depending on $a$, $b$ :
\begin{itemize}
\item[(i)]If $\eta<1$,
\begin{align*}
\mathbb{P}\bigg(\Big\vert\frac{1}{n}\sum_{i=1}^n X_i\Big\vert>\lambda\bigg)\leq& n\exp\bigg(-\frac{1}{C_1}\Big(\frac{n\lambda}{M}\Big)^{\eta}\bigg)+\exp\bigg(-\frac{1}{C_2}\frac{n^2\lambda^2}{M^2+n v^2}\bigg)\\
&+\exp\bigg(-\frac{1}{C_3}\frac{n\lambda}{M^2}\exp\Big(\frac{1}{C_4}\Big(\frac{n\lambda}{M}\Big)^{\eta(1-\eta)}\log^{-1}(\tfrac{n\lambda}{M})\Big)\bigg),
\end{align*}
with
$$v^2:=\sup_{i\geq 1}\Big(\mathbb{E}[X^2_i]+2\sum_{j> i}\vert\mathbb{E}[X_iX_j]\vert\Big).$$
\item[(ii)]If $\eta=1$,
\begin{align*}
\mathbb{P}\bigg(\Big\vert\frac{1}{n}\sum_{i=1}^n X_i\Big\vert>\lambda\bigg)\leq \exp\bigg(-\frac{1}{C_5}\frac{n^2\lambda^2}{nv^2+M^2+n\lambda M(\log(n))^2}\Bigg).
\end{align*}
\end{itemize}
\end{proposition}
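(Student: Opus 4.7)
Since this proposition is essentially a restatement of concentration inequalities from \cite{merlevede2011bernstein,merlevede2009bernstein}, my plan is to indicate the proof strategy rather than reproduce the full argument, and simply defer to those references for the detailed bookkeeping. The underlying structure is the classical Bernstein blocking technique adapted to the $\alpha$-mixing setting.

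The plan is as follows. First, I would perform a block decomposition of the sum: partition the indices $\{1,\dots,n\}$ into alternating ``large'' blocks of size $p$ and ``small'' separator blocks of size $q$, setting $S_k$ to be the partial sum over the $k$-th large block and noting that any two such block sums are separated by at least $q$ indices. This is the step where the parameters $p$ and $q$ enter, and they will be optimized at the end in terms of $\lambda$, $M$, and $n$ depending on the mixing regime $\eta$.

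Second, I would invoke Berbee's coupling lemma (or equivalently Bradley's strong approximation theorem) to construct an independent copy $\{\tilde S_k\}$ of $\{S_k\}$ on a possibly enlarged probability space such that $\mathbb{P}(S_k\ne \tilde S_k)\le \alpha_q$. Summing these errors over the $O(n/(p+q))$ blocks contributes a term of order $n\alpha_q/(p+q)$ to the probability tail; given the stretched-exponential bound \eqref{DecayAlphaMixing}, this is exactly what produces the first summand $n\exp(-C_1^{-1}(n\lambda/M)^\eta)$ after one optimizes $q$.

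Third, on the coupled independent sequence I would apply the classical Bernstein inequality for sums of independent bounded random variables. The bounded-variance proxy is $v^2$ as defined in the statement (after one checks that the covariance series converges thanks to \eqref{DecayAlphaMixing}), and the $\LL^\infty$ bound is inherited from $M$. This yields the Gaussian tail $\exp(-C_2^{-1} n^2\lambda^2/(nv^2+M^2))$, which is the second summand. The extra ``log'' corrections and the third summand in case (i) come from the large-deviation regime, where the truncation level interacts with the sub-exponential mixing: one truncates the $X_i$ at a scale depending on $\lambda$, handles the truncated part by Bernstein and the tail by a union bound using the mixing, and optimizes the threshold — this is the step that produces the exponent $\eta(1-\eta)$ and the $\log^{-1}$ correction.

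The main obstacle, and the reason the literature proof is delicate, is the optimization in case (i) when $\eta<1$: the block sizes $p,q$, the truncation level, and the tradeoff between the coupling error and the Bernstein tail must all be balanced simultaneously, and the three summands in the bound reflect the three regimes of this optimization (coupling-limited, Gaussian, and large-deviation). Case (ii), corresponding to geometric mixing $\eta=1$, is simpler: one takes $q\sim \log n$ so that $n\alpha_q\lesssim 1$ absorbs into the prefactor, and a single application of Bernstein to blocks of size $p$ (optimized against $\lambda$) produces the single-term bound $\exp(-C_5^{-1} n^2\lambda^2/(nv^2+M^2+n\lambda M\log^2 n))$. Since the manipulations are standard once the blocking/coupling framework is set up, I would simply cite \cite[Theorem 1]{merlevede2011bernstein} and \cite[Theorem 2]{merlevede2009bernstein} for the precise statements used here.
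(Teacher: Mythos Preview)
Your proposal is correct and matches the paper's approach exactly: the paper does not prove this proposition at all but simply states it as a recollection, citing \cite[Theorem 1]{merlevede2011bernstein} and \cite[Theorem 2]{merlevede2009bernstein} for the proofs in the correlated case. Your sketch of the blocking/coupling/Bernstein strategy is a welcome elaboration but goes beyond what the paper itself provides.
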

We then recall the link between algebraic moments and exponential moments. The proof is a direct consequence of the Taylor expansion of the exponential function. 
\begin{lemma}\label{momentexp}Let $X$ be a non-negative random variable. The following two statements are equivalent: 
\begin{itemize}
\item[(i)]There exists $C_1>0$ such that 
$$\mathbb{E}\big[\exp(\tfrac{1}{C_1}X)\big]\leq 2.$$
\item[(ii)] There exists $C_2>0$ such that
$$\mathbb{E}[X^p]^{\frac{1}{p}}\leq p\,C_2\quad\text{ for any $p<\infty$}.$$
\end{itemize}
\end{lemma}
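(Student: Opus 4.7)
The proof is a straightforward application of Taylor expansion of the exponential and Stirling's formula. My plan is to treat the two implications separately.

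For the direction $(i)\Rightarrow(ii)$, I would start from the pointwise lower bound $\exp(X/C_1)\geq (X/C_1)^p/p!$ valid for every integer $p\geq 0$ since $X\geq 0$. Taking expectation and using the assumption gives $\mathbb{E}[X^p]\leq 2\,p!\,C_1^p$, and Stirling's bound $p!\leq p^p$ (up to an absolute multiplicative constant, absorbed by enlarging $C_1$) yields $\mathbb{E}[X^p]^{1/p}\leq p\,C_2$ for $C_2$ proportional to $C_1$. For non-integer $p$ one interpolates between $\lfloor p\rfloor$ and $\lceil p\rceil$ via Jensen (or simply notes that $p\mapsto \mathbb{E}[X^p]^{1/p}$ is non-decreasing, so the integer version suffices up to a factor 2).

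For the direction $(ii)\Rightarrow(i)$, I would expand the exponential as a power series and exchange sum and expectation by monotone convergence:
\begin{equation*}
\mathbb{E}\bigl[\exp(X/C_1)\bigr]=\sum_{p\geq 0}\frac{\mathbb{E}[X^p]}{p!\,C_1^p}\leq \sum_{p\geq 0}\frac{(p\,C_2)^p}{p!\,C_1^p}.
\end{equation*}
Using Stirling in the reverse direction, namely $p^p/p!\leq e^p$, the series is bounded by $\sum_{p\geq 0}(eC_2/C_1)^p$, which converges and can be made $\leq 2$ by choosing $C_1$ large enough (say $C_1\geq 2eC_2$).

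There is no real obstacle here; the only minor point of care is handling the non-integer $p$ in $(ii)$, which is dealt with by monotonicity of $\LL^p$-norms on a probability space. The result then follows directly.
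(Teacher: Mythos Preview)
Your proposal is correct and matches the paper's approach exactly: the paper simply states that ``the proof is a direct consequence of the Taylor expansion of the exponential function'' and omits the details, which are precisely the ones you have supplied (Taylor series lower bound for $(i)\Rightarrow(ii)$, term-by-term summation with Stirling for $(ii)\Rightarrow(i)$).
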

We conclude this section by recalling the standard Meyers' estimate for elliptic equations in divergence form, see for instance the original paper \cite{meyers1963p}.
\begin{theorem}[Meyers estimate]\label{Meyers}
Let $a : \mathcal{M}\rightarrow \mathbb{R}^{2\times 2}$ be measurable and uniformly elliptic. Consider $u\in \mathrm{H}^1$ the solution of the Neumann boundary problem
$$
 \left\{
    \begin{array}{ll}
        -\nabla\cdot a\nabla u=\nabla\cdot g & \text{in $\mathcal{M}$,} \\
        a\nabla u\cdot n_{\mathcal{M}}=0 & \text{on $\partial\mathcal{M}$,}
    \end{array}
\right.
$$
for some $g\in \LL^q$ with $q>2$. There exists $2<\bar q<q$ such that 
$$\nabla u\in \LL^{\bar q} \quad\text{and}\quad \|\nabla u\|_{\LL^{\bar q}}\lesssim \|g\|_{\LL^{\bar{q}}}.$$
\end{theorem}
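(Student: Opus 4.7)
The plan is to follow the classical reverse-Hölder / Gehring approach due to Meyers. Since $\mathcal{M}$ is compact with (possibly empty) smooth boundary, the statement is local in nature and it suffices to prove the bound on finitely many coordinate charts (including boundary charts) and then patch together.

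First, I would establish a Caccioppoli inequality. Given a ball $B_r \subset \mathcal{M}$ of sufficiently small radius $r$ (interior, or a half-ball meeting $\partial \mathcal{M}$ in the Neumann case) and a cutoff $\phi$ equal to $1$ on $B_{r/2}$ with $|\nabla \phi| \lesssim 1/r$, testing the equation with $(u - c)\phi^2$ where $c := \av_{B_r} u \, \dd\m$, and combining with uniform ellipticity and Young's inequality, would yield
\begin{equation*}
\int_{B_{r/2}} |\nabla u|^2 \, \dd\m \lesssim \frac{1}{r^2}\int_{B_r} |u - c|^2 \, \dd\m + \int_{B_r} |g|^2 \, \dd\m.
\end{equation*}
The boundary integral generated by the integration by parts vanishes on $\partial \mathcal{M}$ thanks to the Neumann condition $a\nabla u \cdot n_{\mathcal{M}} = 0$, so the inequality is valid uniformly up to the boundary.

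Next, I would invoke the Sobolev-Poincaré inequality in dimension $2$ in the form $\|u - c\|_{\LL^2(B_r)} \lesssim r \|\nabla u\|_{\LL^p(B_r)}$ for some exponent $p < 2$ (any $p \in (1,2)$ works since then $p^* = 2p/(2-p) > 2$). Plugging this into the Caccioppoli inequality produces the self-improving reverse Hölder inequality
\begin{equation*}
\Big(\av_{B_{r/2}} |\nabla u|^2 \, \dd\m\Big)^{1/2} \lesssim \Big(\av_{B_r} |\nabla u|^p \, \dd\m\Big)^{1/p} + \Big(\av_{B_r} |g|^2 \, \dd\m\Big)^{1/2}.
\end{equation*}
Gehring's lemma (see for instance \cite{giaquinta2013introduction}) then upgrades this to slightly higher integrability: there exists $\bar q = \bar q(\lambda, \Lambda) > 2$, which can be chosen arbitrarily close to $2$ and in particular smaller than the given $q$, such that $\nabla u \in \LL^{\bar q}_{\loc}$ with the local bound $\|\nabla u\|_{\LL^{\bar q}(B_{r/4})} \lesssim \|\nabla u\|_{\LL^2(B_r)} + \|g\|_{\LL^{\bar q}(B_r)}$. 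Covering $\mathcal{M}$ by finitely many such balls and absorbing the $\LL^2$ term via the global energy estimate $\|\nabla u\|_{\LL^2} \lesssim \|g\|_{\LL^2} \leq \|g\|_{\LL^{\bar q}}$ then delivers the announced bound $\|\nabla u\|_{\LL^{\bar q}} \lesssim \|g\|_{\LL^{\bar q}}$.

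The main obstacle is the essentially cosmetic adaptation of Gehring's lemma to the compact Riemannian setting with Neumann boundary rather than to the classical Euclidean-cube framework. This is handled by working in normal coordinates in which the metric differs from the Euclidean one by uniformly controlled factors, so that the volume elements, the Sobolev-Poincaré constants, and the doubling property used in Gehring's iteration all behave uniformly across $\mathcal{M}$.
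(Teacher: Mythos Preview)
The paper does not actually prove this theorem; it is stated in the appendix as a recalled standard result with the single-line justification ``see for instance the original paper \cite{meyers1963p}''. Your sketch via Caccioppoli, Sobolev--Poincar\'e, reverse H\"older, and Gehring's lemma is precisely the classical argument one finds in that reference (and in \cite{giaquinta2013introduction}, which the paper also cites elsewhere), so your proposal is correct and aligned with what the paper defers to.
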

\section{Matching cost for point clouds}\label{sec:matchcost}
This section is devoted to recall the upper bounds on the matching cost, results which can be found in \cite[Theorem 2]{borda2021empirical} under mild $\beta$-mixing conditions. The case of Markov chains have been studied in \cite{Riekert, Fournier2015} where sharp upper bounds are obtained. We include a short proof for convenience. 

\begin{proposition}[Matching cost]\label{prop:matchcost}
Let $\rho$ satisfying \eqref{Ellipticity} and $\{\mu^n\}_n$ be defined in \eqref{eq:empmeas} with point clouds satisfying the Assumption \ref{Assumptions} or in the class of Markov chains satisfying the Assumption \ref{ClassMarkov}. There exists a constant $C>0$ such that 
\begin{equation}\label{eq:matchingcost}
W_2^2(\mu^n, \rho \,\dd \m) \le \mathcal{C}_n \frac{\log(n)}{n}\quad\text{with $\sup_{n\geq 1}\mathbb{E}\big[\tfrac{1}{C}\mathcal{C}_n\big]\leq 1$.}
\end{equation}
Furthermore, if \eqref{DecayAlphaMixing} holds with $\eta\geq 1$ then the assumption \eqref{DecatBetaMixing} can be dropped and the stochastic integrability can be improved up to losing a $\log(n)$ factor, namely 
\begin{equation}\label{eq:matchingcostHighMoments}
W_2^2(\mu^n, \rho \,\dd \m) \le \mathcal{D}_n \frac{\log^{\frac{1}{\eta}}(n)\log(n)}{n}\quad\text{with $\sup_{n\geq 1}\mathbb{E}\big[\exp(\tfrac{1}{C}\mathcal{D}_n)\big]\leq 1$.}
\end{equation}
\end{proposition}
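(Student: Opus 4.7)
The plan is to reduce $W_2^2(\mu^n,\rho\,\dd\m)$ to a Fourier-type sum using an intermediate smoothing at time $t\sim \tfrac{1}{n}$ and Peyre's estimate, then exploit either the $\beta$-mixing bound \eqref{DecatBetaMixing} (for the annealed bound \eqref{eq:matchingcost}) or Bernstein-type concentration via Proposition \ref{BersteinCorrelated} (for the exponential moment bound \eqref{eq:matchingcostHighMoments}). For Assumption \ref{ClassMarkov}, the invariant measure is precisely $\rho\,\dd\m$ by \eqref{AbsolutelyContinuousInvariantMeasure}, and the estimate \eqref{eq:markchainbetamixc} gives a geometric decay of $\beta_{ij}$, so everything reduces to the same framework with $\eta=1$.

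First, I would fix $t=\frac{1}{n}$ and use the triangle inequality
\begin{equation*}
W^2_2(\mu^n,\rho\,\dd\m)\lesssim W^2_2(\mu^n,\mu^{n,t})+W^2_2(\mu^{n,t},\rho_t\,\dd\m)+W^2_2(\rho_t\,\dd\m,\rho\,\dd\m).
\end{equation*}
The last term is $\lesssim t\lesssim \tfrac{1}{n}$ by Lemma \ref{lem:HeatContractionrho} and \eqref{Ellipticity}, while the first is $\lesssim t\lesssim \tfrac{1}{n}$ by the classical contractivity \cite[Theorem 3]{erbar2015equivalence}. So the work is all in the central term $W^2_2(\mu^{n,t},\rho_t\,\dd\m)$. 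Since $\mu^{n,t}=\pp_t\mu^n$ satisfies $\mu^{n,t}\geq \tfrac{\lambda}{2}$ with overwhelming probability (through the event $\mathcal{A}_n$ of \eqref{eq:An} plus the $L^\infty$ lower bound $\pp_t\rho\geq \lambda$), I would apply Peyre's inequality \eqref{PeyreLemma} and expand the test function against the spectral basis $\{\phi_k\}$, exactly as in Step 1 of the proof of Proposition \ref{Contractivity}. Since $\pp_t\phi_k=e^{-t\lambda_k}\phi_k$, this yields, on the good event,
\begin{equation*}
W^2_2(\mu^{n,t},\rho_t\,\dd\m)\lesssim \sum_{k\geq 1}\frac{e^{-2t\lambda_k}}{\lambda_k}\,\vert \widehat{\mu^n}(k)-\widehat{\rho}(k)\vert^2.
\end{equation*}

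For the annealed bound \eqref{eq:matchingcost}, I would take expectation and expand the square as in \eqref{SquareExpended}. The diagonal contribution is bounded, via $\|\phi_k\|_{\LL^2}=1$ and \eqref{Ellipticity}, by $\frac{1}{n}\sum_{k\geq 1}\tfrac{e^{-2t\lambda_k}}{\lambda_k}$; writing $\lambda_k^{-1}e^{-2t\lambda_k}=\int_{2t}^\infty e^{-s\lambda_k}\,\dd s$ and using the trace formula \eqref{eq:traceformula} together with the standard heat-kernel upper bound $\int_{\mathcal M}p_s(x,x)\,\dd\m\lesssim s^{-1}$ and the spectral gap \eqref{SpectralGapEigenvalues} gives $\sum_{k\geq 1}\tfrac{e^{-2t\lambda_k}}{\lambda_k}\lesssim \vert\log t\vert$, i.e.\ $\lesssim \log n$. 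For the off-diagonal term, I would absorb the sum over $k$ inside a single test function: define $F(x,y):=\sum_{k\geq 1}\tfrac{e^{-2t\lambda_k}}{\lambda_k}\phi_k(x)\phi_k(y)=\int_{2t}^{\infty}(p_s(x,y)-\tfrac{1}{|\mathcal{M}|})\,\dd s$ and bound $\|F\|_{\LL^\infty}\lesssim \vert\log t\vert$ via the heat-kernel bounds \eqref{Eq28} and the spectral gap. Then the definition \eqref{BetaMixing} of $\beta_{ij}$ and the hypothesis \eqref{DecatBetaMixing} yield
\begin{equation*}
\sum_{1\leq i<j\leq n}\sum_{k\geq 1}\tfrac{e^{-2t\lambda_k}}{\lambda_k}\vert\mathrm{cov}(\phi_k(X_i),\phi_k(X_j))\vert\leq \|F\|_{\LL^\infty}\sum_{i<j}\beta_{ij}\lesssim n\log n,
\end{equation*}
which, after dividing by $n^2$, produces the desired $\tfrac{\log n}{n}$ scaling on average.

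For the sharper statement \eqref{eq:matchingcostHighMoments} under $\eta\geq 1$, I would upgrade the $L^p$-moment control of $\widehat{\mu^n}(k)-\widehat{\rho}(k)$ using the Bernstein-type concentration inequality of Proposition \ref{BersteinCorrelated} applied to the bounded centred variables $\phi_k(X_i)-\mathbb{E}[\phi_k(X_i)]$, with $M=\|\phi_k\|_{\LL^\infty}\lesssim\lambda_k^{1/2}$ by \eqref{eq:eigvaleighfunc} and $v^2$ dominated using \eqref{DecayAlphaMixing} as in the proof of Proposition \ref{Contractivity}; this is the exact analog of \eqref{ContractivityCOM:Eq1}. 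Summing over $k$ via Minkowski, combining with the $\sum_k\tfrac{e^{-2t\lambda_k}}{\lambda_k}\lesssim \log n$ estimate (and an analogous one for the $\lambda_k$-weighted sum via \eqref{eq:traceformula} to absorb the $\lambda_k^{1/2}M$ term), and then invoking Lemma \ref{momentexp} turns polynomial moment bounds into the claimed exponential integrability, at the price of an extra $\log^{\frac{1}{\eta}}(n)$ factor. The Markov chain case of Assumption \ref{ClassMarkov} is then covered using the geometric $\beta$-mixing estimate \eqref{eq:markchainbetamixc} (so $\eta=1$) and the identification \eqref{AbsolutelyContinuousInvariantMeasure} of the invariant density with $\rho$. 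The main technical obstacle I foresee is the proper handling of the bad event where Peyre's estimate fails because $\mu^{n,t}$ could be degenerate; this will be dealt with as in Proposition \ref{Contractivity} by restricting to $\mathcal{A}_n$ (where \eqref{EllipticityAn} holds) and using the trivial bound $W_2^2\leq (\mathrm{diam}\,\mathcal{M})^2$ on the complementary event, which is negligible thanks to Proposition \ref{Fluctuation}.
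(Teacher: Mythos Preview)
Your approach is essentially the same as the paper's: reduce $W_2^2(\mu^n,\rho\,\dd\m)$ to a spectral sum $\sum_k \lambda_k^{-1}e^{-\frac{2}{n}\lambda_k}|\widehat{\mu^n}(k)-\widehat{\rho}(k)|^2$, then control the diagonal by $\|\phi_k\|_{\LL^2}=1$ and the off-diagonal by $\beta$-mixing (for \eqref{eq:matchingcost}) or by Bernstein concentration plus Lemma~\ref{momentexp} (for \eqref{eq:matchingcostHighMoments}). The paper obtains the spectral reduction by quoting the Berry--Esseen smoothing inequality \cite[Theorem~5]{borda2021empirical} as a black box, whereas you reconstruct it via the triangle inequality and Peyre's estimate; the two routes are equivalent in spirit.

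Two points deserve attention. First, your handling of Peyre's estimate is both unnecessary and fragile: you propose to restrict to the event $\mathcal{A}_n$ of \eqref{eq:An} and invoke Proposition~\ref{Fluctuation} on its complement, but that proposition is proved for $t=\log^{\kappa_2}(n)/n$, not for $t=1/n$; with $t=1/n$ the concentration bounds \eqref{Eq59} degenerate (since $nt=1$). The clean fix is to apply Peyre's inequality with the weight $\rho_t$, which deterministically satisfies $\rho_t\ge\lambda$ by \eqref{Ellipticity}, in place of $\mu^{n,t}$; then no good-event argument is needed at all, and you recover exactly the Berry--Esseen smoothing bound the paper cites.

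Second, for the Markov chain case you miss that the chain is \emph{not} identically distributed with law $\rho\,\dd\m$, so $\mathbb{E}[\phi_k(X_\ell)]\neq\widehat{\rho}(k)$ and the variables $\phi_k(X_\ell)-\widehat{\rho}(k)$ are not centred. The paper splits
\[
\widehat{\mu^n}(k)-\widehat{\rho}(k)=\frac{1}{n}\sum_{\ell=1}^n\big(\phi_k(X_\ell)-\mathbb{E}[\phi_k(X_\ell)]\big)+\frac{1}{n}\sum_{\ell=1}^n\big(\mathbb{E}[\phi_k(X_\ell)]-\mu_\infty(\phi_k)\big),
\]
and bounds the deterministic bias term via the exponential convergence \eqref{eq:lemMark} together with $\|\phi_k\|_{\LL^\infty}\lesssim\lambda_k^{1/2}$ from \eqref{eq:eigvaleighfunc}; summing $\sum_k e^{-\lambda_k/n}$ by the trace formula \eqref{eq:traceformula} shows this contributes only $O(1/n)$. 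Without this split your $\beta$-mixing/covariance argument does not apply as written.
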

\begin{proof}
Note that the proof of \eqref{eq:matchingcost} can be found in \cite[Theorem 2]{borda2021empirical} when the point cloud satisfies the Assumption \ref{Assumptions}. We first show how \eqref{eq:matchingcost} can be extended to point clouds which are sampled from a Markov chain satisfying the Assumption \ref{ClassMarkov}. Second, we show how the stochastic integrability can be improved to \eqref{eq:matchingcostHighMoments} when \eqref{DecayAlphaMixing} holds with $\eta \ge 1$.  

\medskip
{\sc Step 1. Markov chains case. }Recall that a Markov chain satisfying the Assumption \ref{ClassMarkov} admits an absolutely continuous invariant measure of the form $\mu_\infty= \rho\, \dd \m$ with $\rho$ satisfying \eqref{Ellipticity}, that is $\lambda\leq \rho\leq \Lambda$. Recalling that we denote by $\{\lambda_n,\phi_n\}_n$ the set of eigenvalues and normalized eigenfunctions of the Laplace-Beltrami operator $-\Delta$ on $\mathcal{M}$, we have by definition \eqref{eq:empmeas} of $\mu^n$, for any $k\geq 1$
\begin{equation}\label{ed:decomposefouriermark}
\widehat{\mu^n}(k) - \widehat{\rho}(k) = \frac1n \sum_{\ell=1}^n (\phi_k(X_\ell) - \mathbb{E}[\phi_k(X_\ell)]) + \frac1n \sum_{\ell=1}^n (\mathbb{E}[\phi_k(X_\ell)] - \mu_\infty (\phi_k)),
\end{equation}
where we use interchangeably the notation $\widehat{\rho}(k) = \int \phi_k \rho \,\dd \m = \mu_\infty(\phi_k)$. Using the Berry-Esseen smoothing inequality \cite[Theorem 5]{borda2021empirical} together with \eqref{ed:decomposefouriermark}, we get
\begin{equation}\label{eq:proofmarkstep1-2}
\begin{split}
\mathbb{E}[W_2^2(\mu^n, \mu_\infty)] \lesssim & \frac1n + \sum_{k\ge1}\frac{e^{-\frac1n\lambda_k}}{\lambda_k} \mathbb{E}\bigg[\bigg(\frac1n \sum_{\ell=1}^n (\phi_k(X_\ell) - \mathbb{E}[\phi_k(X_\ell)])\bigg)^2 \bigg] \\
& + \sum_{k\ge1}\frac{e^{-\frac1n\lambda_k}}{\lambda_k} \mathbb{E} \bigg[ \bigg( \frac1n \sum_{\ell=1}^n (\mathbb{E}[\phi_k(X_\ell)] - \mu_\infty (\phi_k)) \bigg)^2\bigg].
\end{split}
\end{equation}
We now estimate the last two terms of \eqref{eq:proofmarkstep1-2} separately and we start with the third one. Using \eqref{eq:lemMark} and \eqref{ContractivityCOM:Eq4}, we have 
\[
|\mathbb{E}[\phi_k(X_\ell)] - \mu_\infty(\phi_k)| \stackrel{\eqref{eq:lemMark}}{\lesssim} \exp(-b\ell^{\eta}) \|\phi_k\|_{\LL^\infty} \stackrel{\eqref{ContractivityCOM:Eq4}}{\lesssim} \exp(-b\ell^{\eta})\lambda_k^\frac12,
\]
Thus, using in addition \eqref{eq:traceformula}, we get
\begin{equation}\label{eq:proofmarkstep1-3}
\sum_{k\ge1}\frac{e^{-\frac1n\lambda_k}}{\lambda_k} \mathbb{E}\bigg[\bigg(\frac1n \sum_{\ell=1}^n (\phi_k(X_\ell) - \mathbb{E}[\phi_k(X_\ell)])\bigg)^2 \bigg] \lesssim \frac1{n^2} \sum_{k\ge1} e^{-\frac1n \lambda_k} \stackrel{\eqref{eq:traceformula}}{=} \frac1{n^2}\int_{\mathcal{M}} p_{\frac1n}(x,x)\,\dd\m(x) \lesssim \frac1n.
\end{equation}
We now turn to the second term of \eqref{eq:proofmarkstep1-2}. Expanding the square provides
\begin{equation}\label{eq:proofmatchcost-2}
\begin{split}
\lefteqn{\sum_{k \ge 1} \frac{e^{-\frac1n \lambda_k}}{\lambda_k}\Big(\frac1n \sum_{\ell=1}^n (\phi_k(X_\ell) - \mathbb{E}[\phi_k(X_\ell)])\Big)^2}\\ 
& = \frac1{n^2}\sum_{k\ge1}\sum_{\ell=1}^n \frac{e^{-\frac1n \lambda_k}}{\lambda_k} |\phi_k(X_\ell) - \mathbb{E}[\phi_k(X_\ell)]|^2 \\
& + \frac2{n^2} \sum_{k\ge1}\sum_{1\le \ell < \ell'\le n} \frac{e^{-\frac1n \lambda_k}}{\lambda_k} (\phi_k(X_\ell) - \mathbb{E}[\phi_k(X_\ell)])(\phi_k(X_{\ell'}) - \mathbb{E}[\phi_k(X_{\ell'})]).
\end{split}
\end{equation}
We now estimate the two terms on the right hand side of \eqref{eq:proofmatchcost-2}. For the first term, an easy induction argument combining \eqref{AbsoluteContinuity} and \eqref{LawOfTheChain} show that for any $n\geq 1$, $\mathbb{P}_{X_n}\ll \m$ with $\lambda\leq\frac{\dd \mathbb{P}_{X_n}}{\dd\m}\leq \Lambda$. Therefore, we have 
$$ \mathbb{E}\big[|\phi_k(X_\ell) - \mathbb{E}[\phi_k(X_\ell)]|^2\big]\leq \Lambda,$$
and we deduce
\begin{equation}\label{eq:proofmatchcost-3}
\frac1{n^2}\sum_{k\ge1}\sum_{\ell=1}^n \frac{e^{-\frac1n \lambda_k}}{\lambda_k} \mathbb{E}\big[|\phi_k(X_\ell) - \mathbb{E}[\phi_k(X_\ell)]|^2\big] \lesssim \frac1n \sum_{k \ge 1}  \frac{e^{-\frac1n \lambda_k}}{\lambda_k}.
\end{equation}
For the second term, we use \eqref{eq:markchainbetamixc} to obtain
\begin{align*}
&\frac2{n^2} \mathbb{E}\bigg[ \sum_{k\ge1}\sum_{1\le \ell < \ell'\le n} \frac{e^{-\frac1n \lambda_k}}{\lambda_k} (\phi_k(X_\ell) - \mathbb{E}[\phi_k(X_\ell)])(\phi_k(X_{\ell'}) - \mathbb{E}[\phi_k(X_{\ell'})])\bigg]\\
& \lesssim \frac1{n^2}\sum_{1\le \ell < \ell' \le n} \beta_{\ell \ell'} \sup_{x,y \in \mathcal{M}} \bigg\lvert \sum_{k\ge1} \frac{e^{-\frac1n \lambda_k}}{\lambda_k} \phi_k(x)\phi_k(y)\bigg\rvert \\
&\lesssim \frac1n \sup_{x,y \in \mathcal{M}} \bigg\lvert \sum_{k\ge1} \frac{e^{-\frac1n \lambda_k}}{\lambda_k} \phi_k(x)\phi_k(y)\bigg\rvert.
\end{align*}
Combining the latter with \eqref{eq:proofmarkstep1-2}, \eqref{eq:proofmarkstep1-3}, \eqref{eq:proofmatchcost-2} and \eqref{eq:proofmatchcost-3} yields 
\begin{equation}\label{eq:proofmarkstep1-6}
\mathbb{E}[W_2^2(\mu^n, \mu_\infty)] \lesssim \frac1n + \frac1n \sum_{k\ge1} \frac{e^{-\frac1n\lambda_k}}{\lambda_k} + \frac1n \sup_{x,y \in \mathcal{M}} \bigg\lvert \sum_{k\ge1} \frac{e^{-\frac1n \lambda_k}}{\lambda_k} \phi_k(x)\phi_k(y)\bigg\rvert.
\end{equation}
We finally conclude similarly as for \eqref{ControlExpectationLast}.
%
\medskip

{\sc Step 2. Higher stochastic integrability. }We now prove \eqref{eq:matchingcostHighMoments}. We argue using the moment estimate \eqref{ContractivityCOM:Eq1} which, together with Minkowski's inequality and $\lambda_k^\frac1{\log(n)} e^{-\frac1n \lambda_k} \lessim e^{-\frac1{2n}\lambda_k}$ implies
\[
\mathbb{E}\bigg[\bigg(\sum_{k\ge1} \frac{e^{-\frac1n \lambda_k}}{\lambda_k}|\widehat{\mu^n}(k)-\widehat{\rho}(k)|^2\bigg)^p\bigg]^\frac1p \lesssim p^2 \frac1n \sum_{k\ge 1} \frac{e^{-\frac1{2n} \lambda_k}}{\lambda_k}\bigg( \log^\frac1\eta(n) + \frac{\lambda_k(1+\log^2(n))}{n}  \bigg).
\]
Finally, combining the latter with the Berry-Esseen smoothing inequality \cite[Theorem 5]{borda2021empirical} and arguing similarly as for \eqref{ControlExpectationLast} yields \eqref{eq:matchingcostHighMoments} thanks to Proposition \ref{momentexp}.
\end{proof}

\section{Proof for the class of Markov chains}\label{ClassMarkovAppendix}
We provide in this Section the arguments for extending Theorem \ref{MainResultMatchingClouds} and Theorem \ref{th1} to the class of Markov chains introduced in Section \ref{Examples}. The proof follows the lines of the proof of Theorem \ref{MainResultMatchingClouds}, where the main difference is that we drop the assumption that the point clouds is identically distributed. That affects the proofs of the main ingredients (we recall that the scaling of the cost has already be proven in Proposition \ref{prop:matchcost}), namely the $\LL^q$ estimates in Proposition \ref{prop1}, the fluctuation estimates in Proposition \ref{Fluctuation} and the contractivity estimates in Proposition \ref{Contractivity}. We show in the following how to adapt the proofs for a given Markov chain $\{X_n\}_{n\geq 1}$ satisfying Assumption \ref{ClassMarkov}. In the following, we recall that $\mu_\infty=\rho\,\dd\m$ denotes the unique invariant measure of the chain. We split the proof into three steps.

\medskip

{\sc Step 1. $\LL^q$ estimates. }We have to understand the extra error term coming from the deviation of $\mathbb{E}[\mu^n]$ from $\mu_\infty$. In view of \eqref{Eq51}, it is 
$$\bigg(\int_{\mathcal{M}}\dd\m\Big(\int_{0}^\infty \dd s\,\big((-s\Delta)^{\frac{1}{2}}\text{P}_{s+t}(\mathbb{E}[\mu^n]-\mu_\infty)\big)^2\Big)^2\bigg)^{\frac{1}{2}}.$$
Using the definition \eqref{eq:empmeas} of $\mu^n$, the convergence to equilibrium \eqref{eq:lemMark} applied to $f=(-s\Delta)^{\frac{1}{2}}p_{s+t}(x,\cdot)$ and the heat-kernel estimates \eqref{Eq28}, we have for any $s\geq 0$ and $x\in\mathcal{M}$
\begin{equation}\label{eq:estLqmark}
\begin{aligned}
(-s\Delta)^\frac12 \text{P}_{s+t}\big(\mathbb{E}[\mu^n] - \rho\big)(x)
& = \frac1n \sum_{\ell=1}^n  (\mathbb{E}[(-s\Delta)^\frac12 p_{s+t}(x,X_\ell)] - \mu_\infty((-s\Delta)^\frac12 p_{s+t}(x,\cdot))\\ & \stackrel{\eqref{eq:lemMark}}{\lesssim}  \frac{\|(-s \Delta)^\frac12 p_{s+t}(x,\cdot)\|_{\LL^\infty}}n\stackrel{\eqref{Eq28}}{\lesssim}\frac{s^\frac12(s+t)^{-\frac32}}n,
\end{aligned}
\end{equation}
so that, recalling $t=\frac{\log^{\kappa}(n)}{n}$,
$$\bigg(\int_{\mathcal{M}}\dd\m\Big(\int_{0}^\infty \dd s\,\big((-s\Delta)^{\frac{1}{2}}\text{P}_{s+t}(\mathbb{E}[\mu^n]-\mu_\infty)\big)^2\Big)^2\bigg)^{\frac{1}{2}}\lesssim \frac{1}{n^2}\int_0^\infty s(s+t)^{-3}\lesssim \frac{1}{n\log^{\kappa}(n)}\ll\frac{\log(n)}{n}.$$
{\sc Step 2. Fluctuation estimates. }Here, the distribution of the Markov chain affects the concentration estimate \eqref{Eq59}. We show that, defining 
\begin{equation}\label{ExtensionMarkovFluctu3}
\bar{u}^{n,t}_\delta:=\int_{0}^\infty e^{-s}\text{P}_s\Big(\tfrac{1}{\rho_\delta}(\mu^{n,t}-\mathbb{E}[\mu^{n,t}])\Big)\,\dd s,
\end{equation}
we have
\begin{equation}\label{ExtensionMarkovFluctu}
\mathbb{P}\Big(\vert\partial^2_{ij} u^{n,t}_\delta(x)\vert\geq \tfrac{1}{2\log^{\nu}(n)}\Big)\leq \mathbb{P}\Big(\vert\partial^2_{ij} \bar{u}^{n,t}_\delta(x)\vert\geq \tfrac{1}{4\log^{\nu}(n)}\Big)\quad\text{for any $x\in\mathcal{M}$,}
\end{equation}
where the r.h.s can be estimated following the lines of the proof of \eqref{Eq59}. As before, we investigate the extra term coming from the deviation of $\mathbb{E}[\mu^n]$ from $\mu_\infty$. The estimate \eqref{ExtensionMarkovFluctu} follows from 
\begin{equation}\label{ExtensionMarkovFluctu2}
\|\partial^2_{ij}(u^{n,t}_\delta-\bar{u}^{n,t}_\delta)\|_{\LL^{\infty}}\ll \frac{1}{\log^{\nu}(n)}.
\end{equation}
We argue as in \eqref{Eq68}, decomposing $u^{n,t}_\delta-\bar{u}^{n,t}_\delta$ into a regular-part and a singular part: for any $x\in\mathcal{M}$
\begin{equation}\label{MarkovExtensionFluctu4}
\begin{aligned}
(u^{n,t}_\delta-\bar{u}^{n,t}_\delta)(x)=&\frac1{\rho_\delta(x)}\int_0^\infty e^{-s}\,\big(\mathbb{E}[\mu^{n,t+s}] - \rho_{t+s}\big)(x)\,\dd s\\
&+ \int_0^\infty e^{-s}\,\text{P}_s \Big(\big(\tfrac1{\rho_\delta}-\tfrac1{\rho_\delta(x)}\big)\big(\mathbb{E}[\mu^{n,t}] - \rho_t\big)\Big)(x)\,\dd s.
\end{aligned}
\end{equation}
To estimate the second r.h.s integral of \eqref{MarkovExtensionFluctu4}, we use \eqref{Eq69}. For the first r.h.s integral, that we denote by $\mathcal{J}$, we use the definition \eqref{eq:empmeas} of $\mu^n$, the convergence to equilibrium \eqref{eq:lemMark} and the heat-kernel bounds \eqref{Eq28} to obtain 
\begin{align*}
\vert\partial^2_{ij}\mathcal{J}(x)\vert=&\bigg\vert\frac{1}{n}\sum_{k=1}^n\partial^2_{ij}\bigg(\frac1{\rho_\delta(\cdot)}\int_0^\infty e^{-s}\,\big(\mathbb{E}[p_{t+s}(\cdot,X_k)] - \mu_\infty(p_{t+s}(x,\cdot))\big)\,\dd s\bigg)(x)\bigg\vert\\
\lesssim& \frac{1}{n}\sum_{k=1}^n\bigg(\|\nabla^2\tfrac{1}{\rho_\delta}\|_{\LL^\infty}\int_{0}^\infty \vert\mathbb{E}[p_{s+t}(\cdot,X_k)]-\mu_\infty(p_{t+s}(x,\cdot))\vert\\
&+\int_{0}^\infty \vert\mathbb{E}[\nabla^2 p_{s+t}(\cdot,X_k)]-\mu_\infty(\nabla^2 p_{t+s}(x,\cdot))\vert\\
&+\|\nabla\tfrac{1}{\rho_\delta}\|_{\LL^\infty}\int_{0}^\infty \vert\mathbb{E}[\nabla p_{s+t}(\cdot,X_k)]-\mu_\infty(\nabla p_{t+s}(x,\cdot))\vert\Big)\\
\stackrel{\eqref{eq:lemMark},\eqref{Eq28}\eqref{Eq25}}{\lesssim}&\frac{1}{n}\bigg(\delta^{-2}\int_{0}^\infty \min\{(s+t)^{-1},(s+t)^{-\frac{3}{2}}\}\,\dd s+\int_{0}^\infty (s+t)^{-2}\,\dd s+\delta^{-1}\int_{0}^{\infty} (s+t)^{-\frac{3}{2}}\, \dd s\bigg)\\
\lesssim & \frac{1}{n}(\delta^{-2}t^{-\frac{1}{2}}+t^{-1}+\delta^{-1}t^{-\frac{1}{2}})\ll \frac{1}{\log^{\nu}(n)}.
\end{align*}
{\sc Step 3. Contractivity estimate. }Here, the law of the Markov chain affects the estimate \eqref{ContractExpectation}. The extra error term coming from the deviation of $\mathbb{E}[\mu^n]$ from $\mu_\infty$ reads
$$\sum_{k\geq 1} \frac{e^{-\frac{2}{n}\lambda_k}}{\lambda_k}\big(e^{-t\lambda_k}-1\big)^2\vert\mathbb{E}[\widehat{\mu^n}(k)]-\mu_\infty(\phi_k)\vert^2.$$
Using the definition \eqref{eq:empmeas} of $\mu^n$ and the convergence to equilibrium \eqref{eq:lemMark} applied with $f=\phi_k$ and the bound on the eigenfunctions \eqref{eq:eigvaleighfunc}, we have for any $k\leq n$
\begin{align*}
\vert\mathbb{E}[\widehat{\mu^n}(k)]-\mu_\infty(\phi_k)\vert=&\frac{1}{n}\bigg\vert\sum_{\ell=1}^n(\mathbb{E}[\phi_k(X_{\ell})]-\mu_\infty(\phi_k))\bigg\vert^2\stackrel{\eqref{eq:lemMark}}{\lesssim} \frac{1}{n}\|\phi_k\|_{\LL^{\infty}}\stackrel{\eqref{eq:eigvaleighfunc}}{\lesssim}\frac{1}{n}\lambda^{\frac{1}{2}}_k,
\end{align*}
so that, using the trace formula \eqref{eq:traceformula} and the heat-kernel estimates \eqref{Eq28}, we deduce 
$$\sum_{k\geq 1} \frac{e^{-\frac{2}{n}\lambda_k}}{\lambda_k}\big(e^{-t\lambda_k}-1\big)^2\vert\mathbb{E}[\widehat{\mu^n}(k)]-\mu_\infty(\phi_k)\vert^2\lesssim \frac{1}{n^2}\sum_{k\geq 1}e^{-\frac{2}{n}\lambda_k}(e^{-t\lambda_k}-1)^2\stackrel{\eqref{eq:traceformula},\eqref{Eq28}}{\lesssim} \frac{1}{n} \ll \frac{\log\log(n)}{n}.$$

\section*{Acknowledgments} The authors warmly thank Lorenzo Dello Schiavo, Antonio Agresti and Martin Huesmann for useful discussions and fruitful comments.

\bibliographystyle{plain}
\bibliography{references}
\end{document}